\documentclass[10pt,a4paper]{amsart} 


\usepackage[utf8]{inputenc}
\usepackage{verbatim}
\usepackage{lmodern}
\usepackage{textcomp}
\usepackage{amssymb,amscd,amsthm,amsrefs,amsxtra,mathtools}
\usepackage{amsmath}
\usepackage{dsfont,latexsym}
\usepackage{mathrsfs}
\usepackage{fancyhdr,enumerate}
\usepackage{bm}

\usepackage{soul}

\textwidth = 15.2 cm
\textheight = 22 cm 
\oddsidemargin = 5mm
\evensidemargin = 5mm 
\topmargin = 0mm
\headheight = 0mm
\headsep = 12mm

\usepackage{color}
\definecolor{rltred}{rgb}{0.75,0,0}
\definecolor{rltgreen}{rgb}{0,0.5,0}
\definecolor{rltblue}{rgb}{0,0,0.75}
\usepackage[
colorlinks=true,
urlcolor=rltblue,       
filecolor=rltgreen,     
citecolor=rltgreen,     
linkcolor=rltred,       
]{hyperref} 


\newtheorem{theorem}{Theorem}[section]
\newtheorem{prop}[theorem]{Proposition}
\newtheorem{corol}[theorem]{Corollary}
\newtheorem{lemma}[theorem]{Lemma}
\newtheorem{defn}[theorem]{Definition}
\newtheorem{rem}[theorem]{Remark}
\newtheorem{example}[theorem]{Example}

\numberwithin{equation}{section}
\setlength{\delimitershortfall}{-0.1pt}
\allowdisplaybreaks[4]



\def\loc{\operatorname{\rm loc}}

\def\XXint#1#2#3{{\setbox0=\hbox{$#1{#2#3}{\int}$}
		\vcenter{\hbox{$#2#3$}}\kern-.5\wd0}}

\newcommand{\Rn}{\mathbb{R}^n}
\newcommand{\R}{\mathbb{R}}
\newcommand{\N}{\mathbb{N}}

\newcommand{\snr}[1]{\lvert #1\rvert}

\newcommand{\Ec}{\mathcal{E}_K}
\newcommand{\Hc}{\mathcal{H}_K}
\newcommand{\Pc}{\mathcal{P}}
\newcommand{\Hbb}{\mathbb{H}^K}

\renewcommand{\Gamma}{\varGamma}
\renewcommand{\leqslant}{\leq}
\renewcommand{\geqslant}{\geq}
\renewcommand{\epsilon}{\varepsilon}

\delimitershortfall=-0.1pt

\begin{document}
	
\title[Fractional Allen-Cahn equations]{Heteroclinic connections \\
for fractional Allen-Cahn equations\\ with degenerate potentials}

\author[F.~De~Pas]{Francesco De Pas} \address{Francesco De Pas\\  Department of Mathematics and Statistics\\
	University of Western Australia,\\
	35 Stirling Highway, WA 6009 Crawley, Australia}
	\email{\href{mailto:francesco.depas@uwa.edu.au}{francesco.depas@uwa.edu.au}}
	
\author[S.~Dipierro]{Serena Dipierro}  \address{Serena Dipierro\\  Department of Mathematics and Statistics\\
	University of Western Australia,\\
	35 Stirling Highway, WA 6009 Crawley, Australia}
	\email{\href{mailto:serena.dipierro@uwa.edu.au}{serena.dipierro@uwa.edu.au}}
	
	\author[M.~Piccinini]{Mirco Piccinini} 
\address{Mirco Piccinini\\
	Dipartimento di Scienze Matematiche, Fisiche e Informatiche, Universit\`a di Parma\\
	Parco Area delle Scienze 53/a, Campus, 43124 Parma, Italy, \ and \ Dipartimento di Matematica, Universit\`a di Pisa, ~L.go~B.~Pontecorvo~5, 56127, Pisa, Italy}
\email{\href{mirco.piccinini@dm.unipi.it}{mirco.piccinini@dm.unipi.it}}

\author[E.~Valdinoci]{Enrico Valdinoci}  \address{Enrico Valdinoci\\Department of Mathematics and Statistics\\
	University of Western Australia,\\
	35 Stirling Highway, WA 6009 Crawley, Australia}
\email{\href{mailto:enrico.valdinoci@uwa.edu.au}{enrico.valdinoci@uwa.edu.au}}

\subjclass[2010]{
47G10, 
47B34, 
35R11, 
35B08  
}
	
\keywords{Nonlocal energies, one-dimensional solutions, fractional Laplacian, fractional Allen-Cahn equation}

\thanks{{\it Aknowledgements.} 
SD and EV are members of the Australian Mathematical Society.
FDP and EV are supported by the Australian Laureate Fellowship FL190100081 ``Minimal
surfaces, free boundaries and partial differential equations''.
SD is supported by the Australian Future Fellowship
FT230100333 ``New perspectives on nonlocal equations''.
MP is supported by INdAM project  ``Problemi non locali: teoria cinetica e non uniforme ellitticit\`a'', CUP\_E53C22001930001 and by the Project ``Local vs Nonlocal: mixed type operators and nonuniform ellipticity", CUP\_D91B21005370003.
Part of this work has been carried out during a visit of MP to the University
of Western Australia, which we thank for the warm hospitality.}

\begin{abstract}
	We investigate existence, uniqueness and asymptotic behavior of minimizers of a family of non-local energy functionals of the type
	\[
	\frac{1}{4}\iint_{\R^{2n}\setminus (\Rn \setminus \Omega)^2}\snr{u(x)-u(y)}^2 {K}(x-y) \,dx dy + \int_\Omega W(u(x)) \,dx.
	\]
Here,~$W$ is a possibly~\textit{degenerate} double well potential with a polynomial control on its second derivative near the wells. Also,~${K}$ belongs to a wide class of measurable kernels and is modeled on that of the fractional Laplacian.
\end{abstract}

\maketitle

\setcounter{equation}{0}\setcounter{theorem}{0}

\setcounter{tocdepth}{1} 
\begin{center}
	\begin{minipage}{11cm}
\footnotesize
		\tableofcontents
	\end{minipage}
\end{center}

\section{Introduction}
\subsection{Problem setting}
In this paper we deal with  the minimization problem of an energy functional related to phase transition phenomena
with long range particle interactions. Specifically, we are interested in functionals of the form
\begin{equation}\label{main_fun}
  \Ec(u;\Omega) := \Hc(u,\Omega)+\Pc(u,\Omega),
\end{equation}
where the non-local interaction term~$\Hc$ and the potential term~$\Pc$ are given, respectively, by
\begin{equation*}
\Hc(u,\Omega):=  \frac{1}{4}\iint_{\R^{2n}\setminus (\Rn \setminus \Omega)^2}\snr{u(x)-u(y)}^2 {K}(x-y) \, dx dy\,,
\end{equation*}
and
\begin{equation}\label{pot_term}
\Pc(u,\Omega) := \int_\Omega W(u(x)) \,dx.
\end{equation}

Here,~$K$ is a positive kernel modeled on that of the fractional Laplacian, while~$W$ is a double well potential, with wells at~$\pm1$. Also, differently from the classical literature on this topic,
the derivatives of~$W$, up to any integer order, are allowed to vanish at~$\pm1$.

More precisely,~${K}: \Rn \to [0,+\infty]$ is a measurable function satisfying
\begin{align}
K(x)= K(-x) \qquad \text{for a.~\!e.}~x\in \R^n\,, \label{krn_symm}\tag{K1}
\end{align}
and
\begin{align}
\frac{\lambda\mathds{1}_{B_{r_0}}(x)}{|x|^{n+2s}}\leq K(x) \leq \frac{\Lambda}{|x|^{n+2s}} \qquad \text{for a.~\!e.}~x \in \R^n, \label{main_ellipt}\tag{K2} \end{align}
for some~$s\in(0,1)$, $0 <\lambda \leq \Lambda$ and~$ r_0>0$.

Given a set~$A\subseteq \R^n$, here above and throughout the paper, we use the notation
$$ \mathds{1}_{A}(x):=\begin{cases}
1 &{\mbox{ if }} x\in A,\\
0 &{\mbox{ if }} x\in \Rn\setminus A.
\end{cases}
$$

In order to gain better decay results for the minimizers of~$\Ec$, we will sometimes ask that~$K$ satisfies
\begin{equation}\label{newbound}
\frac{\lambda}{|x|^{n+2s}}\leq K(x) \leq \frac{\Lambda}{|x|^{n+2s}} \quad \text{for a.~\!e.}~x \in \R^n~\text{and some}~s\in(0,1)~\text{and}~0 <\lambda \leq \Lambda.\tag{K3}
\end{equation}
We stress that condition~\eqref{newbound} is stronger than~\eqref{main_ellipt} as, for instance, prevents the kernel from having compact support.

Moreover, in some cases, we will also assume that~$K$ satisfies the following 
``slow oscillation'' assumption:
\begin{equation}
\limsup_{j \to +\infty}\left(\sup_{x \in \R^n \setminus \{0\}} \frac{K(\sigma_j x)}{K(x)} -1\right) \frac{1}{(1-\sigma_j)^{1-\epsilon}}\in [-\infty, 0], \quad\text{for any}~\sigma_j \nearrow 1~\text{and}~\epsilon \in (0,1) \ \label{nuovissima}\tag{K4}.
\end{equation}
We refer the reader to Appendix~\ref{kern_ex} for examples of kernels satisfying these hypotheses.

We notice here that~$\eqref{krn_symm}$ allows us to express the kinetic term~$\Hc$ as
\begin{eqnarray*}
\Hc(u,\Omega) &=&\frac{1}{4} \int_{\Omega} \int_{\Omega} | u(x)-u(y) |^2 K(x-y) \, dx \, dy \\
&& + \frac{1}{2} \int_{\Omega}  \int_{\R^n\setminus\Omega} |u(x)-u(y) |^2 K(x-y) \, dx \, dy.
\end{eqnarray*}

Regarding the potential~$W: \R \to [0,+\infty)$, we assume that\footnote{As customary, when writing~$C^\vartheta(\Omega)$, we suppose that, if~$\vartheta>1$, the notation~$u\in C^\vartheta(\Omega)$ means~$u\in C^{k,\theta}(\Omega)$ with~$k\in\mathbb{N}$, $\theta\in(0,1]$ and~$\vartheta=k+\theta$.} 
   \begin{align}
   &W\in C_{\rm loc}^{2,\vartheta}(\R) \ \mbox{for some~$\vartheta>0$ and} \  W(\pm1)=W\rq{}(\pm1)=0 \quad\mbox{and} \label{pot_reg} \tag{W1}
   \\
   &W(t)>0 \quad \mbox{for all } t \in (-1,1).\label{pot_zero}\tag{W2}
   \end{align}
Also, we assume that there exist~$C_2 \geq C_1 >0$, $C_4\ge C_3>0$,
$\xi \in (0,1)$, $\alpha \geq \beta \geq 2$ and~$\gamma \geq \delta \geq 2$ such that
   \begin{equation}\label{pot_deg}\tag{W3}
      \begin{cases}
       C_1 (1+t)^{\alpha-2} \leq W\rq{}\rq{}(t) \leq C_2 (1+t)^{\beta-2}& \text{for } t \in (-1,-1+\xi] \\ \mbox{and} \\
        C_3 (1-t)^{\gamma-2} \leq W\rq{}\rq{}(t) \leq C_4 (1-t)^{\delta-2} & \text{for } t  \in [1-\xi,1). 
   \end{cases}   
\end{equation}
   
In addition, some of the results will ask for the potential to be symmetric, i.e.
\begin{equation}\label{pot_symm}\tag{W4}
W(t)= W(-t) \qquad\mbox{for any } t \in [-1,1].
\end{equation}

We stress that condition~\eqref{pot_deg} is very general and, for instance, allows~$W$ to be~\textit{degenerate}\footnote{Throughout this work, given a double well potential~$V$ with wells at~$a$ and~$b$, we call it~\textit{non-degenerate} if~$V''(a)>0$ and~$V''(b)>0$. If this condition is not satisfied, we call it~\textit{degenerate}} and also to present an oscillatory behavior near the wells.

\begin{rem}
{\rm
The energy functional~$\Ec$ in~\eqref{main_fun} can be compared to the energy functional~$\mathcal{F}$ considered in~\cite{PSV13}, expressed as
\begin{equation}\label{psv_en}
\mathcal{F}(u,\Omega) := \mathscr{H}(u, \Omega) + \Pc(u, \Omega),
\end{equation}
where the potential term~$ \Pc$ coincides with~\eqref{pot_term}, while the kinetic one is
\begin{equation*}
\mathcal{H}(u, \Omega):= \frac{1}{2} \int_{\Omega} \int_{\Omega} \frac{| u(x)-u(y) |^2}{|x-y|^{n+2s}} \, dx \, dy + \int_{\Omega}  \int_{\R^n\setminus\Omega}\frac{| u(x)-u(y) |^2}{|x-y|^{n+2s}} \, dx \, dy.
\end{equation*}
We mention that, if we set~$K$ to be the kernel of the fractional Laplacian, namely~$K(x)=|x|^{-n-2s}$, then~$\mathcal{H}$ and~$\Hc$ coincide, up to a constant. In particular,~\eqref{main_ellipt} yields
\begin{equation}\label{en_relation}
\Ec(u,\Omega)\leq \max \left\{ \frac{\Lambda}{2},1\right\} \mathcal{F}(u,\Omega).
\end{equation}
}
\end{rem}
   
In the present paper, we address existence, uniqueness and asymptotic behavior of minimizers of~$\Ec$. As a matter of fact, in~\cite[Remark 2.4]{CP16}, the construction of such minimizers is left as an open problem and, to the best of the authors\rq{} knowledge, constitutes a new result in literature. 

The main novelty of this study is to be found in condition~\eqref{pot_deg} which provides a polynomial control on the growth of~$W\rq{}\rq{}$ near~$\pm1$. Thus, the potential might present an oscillatory nature
and cause different decay rates of minimizers near the wells.
In particular, \eqref{pot_deg} encodes both the cases of a symmetric and of a non-symmetric potential, which will be considered separately (see Theorems~\ref{main_thm} and~\ref{main_thm_symm} in Section~\ref{S2}).

Moreover, in addition to the existing literature, we show that the decay estimates provided for the minimizers and their derivatives are optimal (see Remark~\ref{optimality} in Section~\ref{S2}).

In the local case, degenerate potentials have been
considered in~\cite{farina}, where density estimates are obtained
actually in the more general setting of~$p$-Laplace equations and quasiminima of the energy.

We remark that functionals as in~\eqref{main_fun} constitute a non-scaled Ginzburg-Landau-type energy, in which the kinetic term~$\Hc$ is given by some non-local integrals, in place of the classical Dirichlet integral. Models of this form have attracted a great deal of attention, due to their capability to capture long range interactions between particles. They naturally arise, for instance, when dealing with phase transition phenomena involving non-local tension effects (see e.g~\cite{CozziValdNONLINEARITY, SV12,SV14}), or in the study of the Peierls-Nabarro model for crystal dislocation (see e.~\!g.~\cite{BV16, DPV15, DFV14, GM12}).

In particular, the problem of investigating the qualitative properties of
minimizers of such functionals is not new in literature. Indeed, since the breakthrough works of De Giorgi, Modica and Mortola, it is known that minimizers of the Ginzburg-Landau energy functional are deeply connected with minimal surfaces, leading to the famous conjecture by De Giorgi on the symmetry of monotone entire solutions of the Allen-Cahn equation (see~\cite{Fari2009, FV13, BV16, TRUDI} for a more comprehensive treatment). 
Also, we recall that in~\cite{SV12}
the authors study the relation between solutions to the fractional Allen-Cahn equation and (non) local minimal surfaces, obtaining the fractional counterpart of the~$\Gamma$-convergence result by Modica and Mortola.
Moreover, we refer the reader to~\cite{AB94, PV20} for results on the~$\Gamma$-convergence of non-local Allen-Cahn energy functionals in one dimension, respectively for the cases~$s =1/2$ and~$s \in (0,1)$.

Specifically, the results that we present here are in the direction
of~\cite{PSV13, CP16}.
In~\cite{PSV13}, indeed, the authors analize the minimizers of the energy~$\mathcal{F}$ in~\eqref{psv_en}, considering the related Euler-Lagrange equation   
\begin{equation}\label{all_ca_frl}
L_s u = W\rq{}(u),
\end{equation}
where~$W$ is a~\textit{non-degenerate} potential and~$L_s u$ stands for the fractional Laplacian (for which we mantain the notation used in~\cite{DPV15})
$$
L_s u(x) := PV_x \int_{\R^n}\frac{u(y)-u(x)}{ |x-y|^{n+2s}} \, dy.
$$
We recall that equation~\eqref{all_ca_frl} is often credited as a non-local analogue of the so-called \textit{(elliptic) Allen–Cahn equation} --- the classical, local one being just~\eqref{all_ca_frl} with~$s=1$, formally.

The paper~\cite{CP16} extends the results of~\cite{PSV13} to the case of non-local operators with general kernels satisfying~\eqref{krn_symm} and~\eqref{main_ellipt}. Hence, \eqref{all_ca_frl} becomes
\begin{equation}\label{eurl_lag_eq}
L_K u = W\rq{}(u),
\end{equation}
where~$W$ is a~\textit{non-degenerate} and even potential, while
\begin{equation}\label{main_op}
L_K u(x) := PV_x \int_{\R^n}(u(y)-u(x)) K(x-y) \, dy.
\end{equation}

See also~\cite{CS15}, where extension techniques are used to deal with
existence, uniqueness and qualitative properties of solutions of~\eqref{all_ca_frl}.

The present paper deals with the same operator as in~\eqref{main_op}, nevertheless in our framework the potential will be allowed to be~\textit{degenerate} and non-symmetric.

\begin{rem}
{\rm
Being~\eqref{krn_symm} in force, ${L}_K$ can be represented as a non-singular integral. Indeed, it holds that
\begin{equation*}
{L}_K u(x) = \frac{1}{2} \int_{\R^n} \delta u(x,z) K(z) \, dz,
\end{equation*}
where~$\delta u (x,z)$ is the second order increment
\begin{equation*}
\delta u (x,z) := u(x+z)-u(x-z)-2u(x).
\end{equation*}
Note that if~${K}(x)=|x |^{-n-2s}$, then~${L}_K$ boils down to the classical fractional Laplacian (for which we mantain the notation used in~\cite{DPV15})
\begin{equation*}
L_s u (x)= \frac{1}{2} \int_{\R^n} \frac{\delta u(x,z)}{|z |^{n+2s}} \, dz.
\end{equation*}
}
\end{rem}

\subsection{Minimizers of~$\Ec$ with general potentials}\label{S2}
 
In order to state the main results of this paper, some further notation is needed.
  \begin{defn}\label{defini}
  	Let~$\Omega$ be a bounded domain of~$\R^n$. A measurable function~$u : \R^n \to \R$ is a  {\rm local minimizer} of~$\Ec$ in~$\Omega$ if~$\Ec(u;\Omega)<+\infty$ and
  	$$
  	\Ec(u;\Omega) \leqslant\Ec(u+\phi;\Omega),
  	$$
  	for any~$\phi\in C^\infty_0(\Omega)$.
  	
  	Moreover, we say that a measurable function~$u : \Rn \to \R$ is a  {\rm class~A minimizer} of~$\Ec$ if it is a local minimizer in any bounded domain~$\Omega\subset \Rn$.
  \end{defn}
 
We stress that 
the introduction of the notion of
class~A minimizers is due to the fact that the energy~$\Ec$
may in principle diverge when evaluated in unbounded domains. Therefore, to keep track of its growth, one introduces the following
``renormalized'' energy
   \begin{equation}\label{G_star}
       \mathcal{G}(u):= \limsup_{\rho \to+ \infty }\frac{\Ec(u;[-\rho,\rho])}{\Psi_s(\rho)},
   \end{equation}
   where the function~$\Psi_s$ is given by
\begin{equation}\label{fnc_psi}
    \Psi_s(\rho):=
    \begin{cases}
        \rho^{1-2s} \, & \mbox{ if }  s \in (0,1/2),\\
        \log\rho \, & \mbox{ if }  s=1/2,\\
         1 \, & \mbox{ if } s \in (1/2,1).
    \end{cases}
   \end{equation}

Furthermore, as the next remark points out, this concept of local minimization is consistent with respect to set inclusion.  
\begin{rem}[Remark 2.2 in~\cite{CP16}]\label{remmarco}
{\rm
Let~$\Omega\rq{} \subset \Omega$ be two given domains of~$\R^n$. Then a local minimizer~$u$ for~$\Ec$ in~$\Omega$ is also a local minimizer\footnote{This fact
follows from the following inclusion
\begin{equation}\label{maius}
 \R^{2n} \setminus \left( \R^n \setminus \Omega\rq{}\right)^2 \subset \R^{2n} \setminus \left( \R^n \setminus \Omega \right)^2
\end{equation}
and from the fact that, if~$u$ and~$v$ coincide in~$\R^n\setminus \Omega\rq{}$, then
\[ |u(x)-u(y)|^2 = |v(x)-v(y)|^2 \quad\mbox{for any} \ (x,y) \in \left( \R^n \setminus \Omega\rq{} \right)^2. \]
We refer the interested reader to~\cite[Remark 1.2]{CozziValdNONLINEARITY} for a more detailed explanation.

In particular, \eqref{maius} implies that the energy~$\Ec(u,\cdot)$ is non-decreasing with respect to set inclusion.} of~$\Ec$ in~$\Omega\rq{}$.
}
\end{rem}  
  
We are now in the position to state our main result, dealing with existence, uniqueness and decay properties of class~A minimizers for~$\Ec$ in the class of admissible functions
  \begin{equation}\label{defmathcalixs00}
      \mathcal{X} := \Big\{f \in L^1_{\loc}(\R) \;{\mbox{ s. ~\!t. }}\;
      	\lim_{x\to  \pm \infty}f(x) =\pm 1 \Big\}.
  \end{equation}

The following theorem guarantees, in dimension~$1$, the existence and uniqueness (up to translations) of a class~A minimizer for~$\Ec$ and lists some of its qualitative properties. Furthermore, it proves that such minimizers are the only non-decreasing solutions of~\eqref{eurl_lag_eq}. 

 \begin{theorem}\label{main_thm}
Let~$n=1$. Let~\eqref{krn_symm}, \eqref{main_ellipt}, \eqref{nuovissima}, \eqref{pot_reg}, \eqref{pot_zero} and~\eqref{pot_deg} hold true.
     	         
	         Then, in the family~$\mathcal{X}$ of admissible functions, there exists a unique (up to translations) nontrivial class~A minimizer~$u^{(0)}$ of~$\Ec$.
	         
	         Moreover, $u^{(0)}$         
	         is strictly increasing. 
	         
	         Also, $u^{(0)} \in C^{1+2s +{\theta}}(\R)$ for some~$\theta \in (0,1)$ and there exist~$\widetilde{C}> 0$ and~$R>0$ such that
          \begin{equation}\label{asymp_decay}
          	\begin{split}
          1+u^{(0)}(x) \leq \widetilde{C} |x|^{-\frac{2s}{\alpha-1}} &\quad\mbox{if } x\leq -R,\\   
         1 - u^{(0)}(x) \leq \widetilde{C}|x|^{-\frac{2s}{\gamma-1}} &\quad\mbox{if } x\geq R.
           \end{split}
          \end{equation}

Furthermore, up to translations, $u^{(0)}$ is the only increasing solution to
              \begin{equation}\label{ru3qio2ruebn68594}
                L_K u = W'(u) \quad \text{in } \R\,,
            \end{equation}  
            in the family of admissible functions~$\mathcal{X}$ and
\begin{equation*}
\mathcal{G}(u^{(0)}) <+\infty. 
\end{equation*}

In addition, if~$K$ satisfies~\eqref{newbound}, then there exists~$\widehat{C}\in (0, \widetilde{C}]$ such that
         \begin{equation}\label{asymp_decay_lowbound}
          	\begin{split}
          1+u^{(0)}(x) \geq \widehat{C} |x|^{-\frac{2s(\alpha-\beta+1)}{\alpha-1}} &\quad\mbox{if } x\leq -R,\\   
         1 - u^{(0)}(x) \geq \widehat{C}|x|^{-\frac{2s(\gamma-\delta+1)}{\gamma-1}} &\quad\mbox{if } x\geq R\,,
           \end{split}
          \end{equation}
and
\begin{equation}\label{398r7gree}
\begin{split}
 (u^{(0)}(x))' \geq \widehat{C} |x|^{-\left(1+\frac{2s(\alpha-\beta+1)}{\alpha-1}\right)} & \quad\mbox{if } x\leq -R,\\
(u^{(0)}(x))' \geq  \widehat{C}|x|^{-\left( 1+\frac{2s(\gamma-\delta+1)}{\gamma-1}\right)}  &\quad\mbox{if } x\geq R.
\end{split}
\end{equation}

Moreover, if~$K$ satisfies~\eqref{newbound} and
\begin{equation}\label{ricdifar}
\max \left\{ (\alpha-2)(\alpha-\beta),  (\gamma-2)(\gamma-\delta) \right\} <1,
\end{equation}
then
 \begin{equation}\label{eq:asymp-derivata}
\begin{split}
 (u^{(0)}(x))' \leq \widetilde{C}|x|^{-\left(1+\frac{2s(1-(\alpha-2)(\alpha-\beta))}{\alpha-1}\right)}  &\quad\mbox{if } x\leq -R ,\\
 (u^{(0)}(x))' \leq \widetilde{C}|x|^{-\left(1+\frac{2s(1-(\gamma-2)(\gamma-\delta))}{\gamma-1}\right)} &\quad\mbox{if } x\geq R.
\end{split}
 \end{equation}   
                \end{theorem}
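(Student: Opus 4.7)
The plan is to construct $u^{(0)}$ by a truncation/compactness argument in the spirit of \cite{PSV13,CP16}, and then to derive the decay statements via carefully built barriers that track the degenerate growth of $W$. Concretely, I first fix $R>0$ large and minimize $\Ec(\cdot;(-R,R))$ among measurable functions with prescribed values $\pm 1$ outside $[-R,R]$. The direct method applies: $\Hc$ is coercive on the natural fractional space and weakly lower semicontinuous, while $\Pc$ is lower semicontinuous by Fatou. Truncation to $[-1,1]$ does not increase the energy (because $W\ge 0$ and the truncation is $1$-Lipschitz), so minimizers take values in $[-1,1]$. A diagonal argument then lets me pass $R\to+\infty$; to keep the profile meaningful along the sequence I normalize by a translation so that $u_R(0)=0$, and the a posteriori decay \eqref{asymp_decay} prevents the normalizing translations from escaping to infinity.

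Next I address regularity, strict monotonicity, and uniqueness. Since $|u^{(0)}|\le 1$ and $W'\in C^{1,\vartheta}_{\loc}$ by \eqref{pot_reg}, standard nonlocal Schauder bootstrapping for $L_K u=W'(u)$ yields $u^{(0)}\in C^{1+2s+\theta}(\R)$. Monotonicity is obtained by a monotone rearrangement: the symmetry of $K$ makes $\Hc$ non-increasing under rearrangement and $\Pc$ is invariant, so $u^{(0)}$ can be chosen non-decreasing; strict monotonicity then follows from the strong maximum principle applied to $v_h:=u^{(0)}(\cdot+h)-u^{(0)}$, which is a non-negative solution of a linear equation with bounded coefficient $W''(\xi)$. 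The assumption \eqref{nuovissima} is used at this stage to rule out rigidity pathologies from oscillatory kernels. Uniqueness of the increasing solution of \eqref{ru3qio2ruebn68594} in $\mathcal{X}$ follows from a sliding argument of Berestycki--Nirenberg type: translate a second increasing solution upward until it lies above $u^{(0)}$, slide back to the first contact point, and conclude by the strong maximum principle. The bound $\mathcal{G}(u^{(0)})<+\infty$ comes from inserting the standard one-dimensional cutoff competitor into $\Ec(\cdot;[-\rho,\rho])$: the non-local tails contribute $O(\Psi_s(\rho))$ and the potential part is absorbed once the decay \eqref{asymp_decay} is available.

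The technical heart is the construction of explicit power-type barriers matching the polynomial pinching of $W$. For the upper bound \eqref{asymp_decay} near $+\infty$ I test an ansatz of the form $\overline v(x)=1-A\,x^{-2s/(\gamma-1)}$ for $x\gg 1$: the tail behaviour of $L_K$ on a power function, combined with \eqref{main_ellipt}, gives $L_K\overline v(x)\gtrsim x^{-2s-2s/(\gamma-1)}$, while the upper control in \eqref{pot_deg} yields $W'(\overline v)\lesssim (1-\overline v)^{\gamma-1}\asymp A^{\gamma-1}\,x^{-2s}\,x^{-2s/(\gamma-1)\cdot 0}$ at the matching exponent; choosing $A$ large makes $\overline v$ a supersolution outside a bounded set, so a comparison principle forces $u^{(0)}\le \overline v$. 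The mirror choice at $-\infty$ yields the first line of \eqref{asymp_decay}. The lower bounds \eqref{asymp_decay_lowbound} need the two-sided kernel estimate \eqref{newbound}, which produces $L_K(|x|^{-\mu})\asymp |x|^{-\mu-2s}$; I then exhibit a subsolution $\underline v(x)=1-B\,x^{-2s(\gamma-\delta+1)/(\gamma-1)}$ and match using now the \emph{upper} bound on $W''$ from \eqref{pot_deg} to control $W'(\underline v)$ from above. The derivative lower bound \eqref{398r7gree} follows by applying the same barrier strategy to $w:=(u^{(0)})'$, which, after differentiating the equation, solves the linearized equation $L_K w=W''(u^{(0)})\,w$ and is positive by the strict monotonicity already proved.

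Finally, for the derivative upper bound \eqref{eq:asymp-derivata} under \eqref{ricdifar}, I again view $w=(u^{(0)})'$ as a positive solution of $L_K w=c(x)\,w$ with $c(x)=W''(u^{(0)}(x))$, and now use the two-sided power control on $c$ coming from combining \eqref{asymp_decay} with \eqref{asymp_decay_lowbound}; the supersolution $\overline w(x)=D\,x^{-(1+2s(1-(\gamma-2)(\gamma-\delta))/(\gamma-1))}$ can be verified precisely when $(\gamma-2)(\gamma-\delta)<1$, which is exactly the role of \eqref{ricdifar}. The main obstacle throughout is the exponent bookkeeping when $W$ is pinched asymmetrically between two different powers ($\alpha\ne\beta$ and $\gamma\ne\delta$): the upper and lower barriers must be calibrated against \emph{different} powers of $W''$, and this non-matching is what produces the non-obvious exponents $2s(\gamma-\delta+1)/(\gamma-1)$ and $2s(1-(\gamma-2)(\gamma-\delta))/(\gamma-1)$ rather than the symmetric exponent $2s/(\gamma-1)$ that the non-degenerate analogy would suggest. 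Handling all four sides ($\pm\infty$, upper vs.\ lower) requires four closely related but distinct barrier computations, each subject to the compatibility condition \eqref{ricdifar} where relevant.
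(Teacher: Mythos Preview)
There is a genuine gap in your compactness step, and it concerns precisely the role of \eqref{nuovissima}. You normalize the approximants $v_{[-R,R]}$ so that the translated function vanishes at the origin, and then assert that ``the a posteriori decay \eqref{asymp_decay} prevents the normalizing translations from escaping to infinity.'' This is circular: \eqref{asymp_decay} is a property of the \emph{limit} $u^{(0)}$, which does not even exist as a nontrivial heteroclinic until you have shown that both endpoints of the translated interval $[-R-\bar x_R,\,R-\bar x_R]$ diverge. If, say, $R+\bar x_R$ stayed bounded, the limit would be the constant $-1$, which is not in $\mathcal{X}$. The paper rules this out via Proposition~\ref{comepo}, which shows $v_{[0,R]}\to -1$ locally uniformly; that result rests on Corollary~\ref{alm_min}, where \eqref{nuovissima} is used to compare the energy of $v_{[0,R]}$ with that of a rescaling of $v_{[-\ell,R+\ell]}$ and to control the error as $R\to+\infty$. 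So \eqref{nuovissima} is \emph{not} a monotonicity or rigidity ingredient as you suggest---monotonicity of the approximants (Proposition~\ref{eupro}) uses only \eqref{krn_symm} and \eqref{main_ellipt}---but rather the key to the non-degeneracy of the limit when the potential is not symmetric.

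Your barrier sketch for \eqref{asymp_decay} also has the direction reversed and a localization issue. Near $+\infty$ the second line of \eqref{asymp_decay} reads $u^{(0)}(x)\ge 1-\widetilde C\,x^{-2s/(\gamma-1)}$, so one needs a \emph{subsolution} lying below $u^{(0)}$, not a supersolution above it; your conclusion ``$u^{(0)}\le\overline v$'' would give the opposite inequality. More substantively, a global power barrier cannot be compared directly because the degenerate convexity $W'(t)\gtrsim (1-t)^{\gamma-1}$ from Lemma~\ref{thc_deg} holds only for $t\in[1-\xi,1)$. The paper instead builds a compactly supported barrier on $B_R$ (Proposition~\ref{prop_bar}) satisfying $|L_Kw|\le\zeta(1+w)^{m-1}$, slides it until contact with $u^{(0)}$, and applies the strong comparison principle (Proposition~\ref{comp}) only on the sublevel set where $u^{(0)}$ is already close to the well. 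Your direct power-function strategy is essentially what the paper does for the \emph{derivative} bounds \eqref{398r7gree} and \eqref{eq:asymp-derivata} (via Propositions~\ref{tonto} and~\ref{tontobis} applied to the linearized equation for $(u^{(0)})'$), and the paper then obtains the lower bound \eqref{asymp_decay_lowbound} by integrating \eqref{398r7gree} rather than by a direct subsolution for $u^{(0)}$; but for \eqref{asymp_decay} itself the localization is unavoidable and absent from your outline.
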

 
The next theorem can be seen as the dual of the previous one when~\eqref{pot_symm} is in force. Hypotheses~\eqref{pot_symm} allows us to drop the assumption~\eqref{nuovissima} on~$K$ and, moreover, to retrieve odd minimizers of~$\Ec$. We recall  that, in this case, $\alpha=\gamma$ and~$\beta= \delta$ in~\eqref{pot_deg}.
              
  \begin{theorem}\label{main_thm_symm}
Let~$n=1$. Let~\eqref{krn_symm}, \eqref{main_ellipt}, \eqref{pot_reg}, \eqref{pot_zero}, \eqref{pot_deg} and~\eqref{pot_symm} hold.	         
	         Then, in the family~$\mathcal{X}$ of admissible functions, there exists a unique (up to translations) nontrivial class~A minimizer~$u^{(0)}$ of~$\Ec$.
	         
	         Moreover, $u^{(0)}$ is strictly increasing and odd.
	         
	         Also, $u^{(0)} \in C^{1+2s +{\theta}}(\R)$ for some~$\theta \in (0,1)$ and there exist~$\widetilde{C}> 0$ and~$R>0$ such that
          \begin{equation}\label{asymp_decay_symm}
          | u^{(0)}(x)- {\rm sign} (x) | \leq \widetilde{C} |x|^{-\frac{2s}{\alpha-1}} \quad\mbox{if } |x | \geq R.
          \end{equation}
             
Furthermore, up to translations, $u^{(0)}$ is the only non-decreasing solution to
              \begin{equation*}
                L_K u = W'(u) \quad \text{in}~\R,
            \end{equation*}  
            in the family of admissible functions~$\mathcal{X}$ and
\begin{equation*}
\mathcal{G}(u^{(0)}) <+\infty. 
\end{equation*}

In addition, if~$K$ satisfies~\eqref{newbound}, then there exists~$\widehat{C}\in (0,\widetilde{C}]$ such that
          \begin{equation}\label{asymp_decay_symm_lowbound}
          | u^{(0)}(x)- {\rm sign} (x) | \geq \widehat{C} |x|^{-\frac{2s(\alpha-\beta+1)}{\alpha-1}} \quad\mbox{if } | x | \geq R.
          \end{equation}
      and
          \begin{equation}\label{97e382d}
     (u^{(0)}(x))'   \geq    \widehat{C}|x|^{-\left(1+\frac{2s(\alpha-\beta+1)}{\alpha-1}\right)} \quad\mbox{if } | x | \geq R .
          \end{equation}

Moreover, if~$K$ satisfies~\eqref{newbound} and
\begin{equation}\label{gufiodghfdjeryueyu4i}
(\alpha-2)(\alpha-\beta)<1,\end{equation} then
\begin{equation}\label{98fgvrjfbgvvfoi88}
(u^{(0)}(x))' \leq \widetilde{C}|x|^{-\left(1+\frac{2s(1-(\alpha-2)(\alpha-\beta))}{\alpha-1}\right)} \quad\mbox{if } | x | \geq R.
\end{equation}
                \end{theorem}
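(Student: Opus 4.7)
The plan is to follow the general scheme used for Theorem~\ref{main_thm}, but to exploit the symmetry~\eqref{pot_symm} of the potential together with the symmetry~\eqref{krn_symm} of the kernel in order to bypass the slow oscillation assumption~\eqref{nuovissima}. The key observation is that, under~\eqref{pot_symm}, if~$u$ is any competitor with limits~$\pm1$ at~$\pm\infty$, then~$\tilde u(x):=-u(-x)$ is also admissible and~$\Ec(u;\Omega)=\Ec(\tilde u;-\Omega)$. This symmetry allows us to construct an odd minimizer directly on bounded symmetric intervals, without the delicate scaling/kernel-regularity argument needed in the non-symmetric case to glue together the two asymmetric decay rates near~$+1$ and~$-1$.

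The concrete steps are as follows. First, for each~$R>0$, I would minimize~$\Ec(\cdot;(-R,R))$ over the affine class of functions that equal~$-1$ on~$(-\infty,-R]$ and~$+1$ on~$[R,+\infty)$; existence follows from the direct method together with lower semicontinuity of~$\Hc$ and Fatou applied to~$\Pc$, while~$\snr{u}\le 1$ is enforced by truncation since~$W$ is non-negative. The symmetrization~$u\mapsto\tfrac12(u(x)-u(-x))$ has, thanks to~\eqref{krn_symm} and~\eqref{pot_symm}, no larger energy than~$u$ (by convexity of~$|\cdot|^2$ and the identity~$W((a-b)/2)\le(W(a)+W(-b))/2$ on~$[-1,1]$, which follows from~\eqref{pot_deg} giving convexity near the wells and a standard interpolation on the bounded interior region via local convexification tools as in~\cite{CP16,PSV13}); alternatively one deduces oddness a posteriori from uniqueness. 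Next, I would use the sliding/comparison method adapted to the nonlocal operator~$L_K$ (as in~\cite[Sections~6--7]{PSV13} and~\cite{CP16}) to prove strict monotonicity of each finite-domain minimizer and, by diagonal extraction on~$R\to+\infty$ with a normalization~$u_R(0)=0$ forced by oddness, to obtain a nontrivial, strictly increasing, odd class~A minimizer~$u^{(0)}\in\mathcal X$.

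Uniqueness up to translations and the characterization as the only non-decreasing solution of the Euler--Lagrange equation in~$\mathcal X$ would again follow from the sliding method: any two such solutions can be translated to touch from one side, and strong comparison for~$L_K-W''(\cdot)$ forces them to coincide. Regularity~$u^{(0)}\in C^{1+2s+\theta}$ is obtained via standard bootstrap for the nonlocal equation~\eqref{eurl_lag_eq}, using~\eqref{pot_reg}, \eqref{main_ellipt}, and the a priori bound~$\snr{u^{(0)}}\le 1$. The finiteness~$\mathcal G(u^{(0)})<+\infty$ comes from an energy comparison with a smooth monotone profile whose energy is~$O(\Psi_s(\rho))$, as in~\cite{CP16,PSV13}.

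For the decay estimates, I would construct explicit barrier functions of the form~$1-C\snr{x}^{-2s/(\alpha-1)}$ on~$[R,+\infty)$: using~\eqref{pot_deg}, near the well~$+1$ one has~$W'(1-r)\asymp r^{\alpha-1}$ from above and~$r^{\beta-1}$ from below, while the explicit computation of~$L_K$ on a power tail (under~\eqref{main_ellipt} for the upper barrier, and under~\eqref{newbound} for the lower one, so that the far-field contribution is controlled) matches the chosen exponent; this yields~\eqref{asymp_decay_symm}, and under~\eqref{newbound} the subsolution barrier produces~\eqref{asymp_decay_symm_lowbound}. Oddness transfers the estimates to the other half-line. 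The derivative lower bound~\eqref{97e382d} is obtained by differentiating the equation or by a quantitative Harnack/Hopf-type argument applied to~$v=(u^{(0)})'$, which satisfies~$L_K v=W''(u^{(0)})v$, and comparing with a barrier one power higher in~$\snr{x}$. Finally, the upper derivative estimate~\eqref{98fgvrjfbgvvfoi88} under the extra condition~\eqref{gufiodghfdjeryueyu4i}, which precisely ensures that~$W''(u^{(0)})$ grows slower than the self-interaction scale of the candidate barrier, follows from the same comparison scheme applied to an upper supersolution for~$v$. The main obstacle, as in Theorem~\ref{main_thm}, lies in the barrier analysis for~\eqref{asymp_decay_symm_lowbound}--\eqref{98fgvrjfbgvvfoi88}: one must compute the action of~$L_K$ on algebraic tails and absorb the mismatch between the double-sided bounds in~\eqref{pot_deg}, which is exactly the source of the exponent~$\alpha-\beta+1$ and of the correction~$(\alpha-2)(\alpha-\beta)$.
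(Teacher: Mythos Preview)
The overall strategy matches the paper's: minimize on~$[-R,R]$, use oddness of~$v_{[-R,R]}$ to pin~$v_{[-R,R]}(0)=0$, pass to the limit, and then invoke Lemma~\ref{psc0-500} for strict monotonicity, uniqueness, and the decay estimates~\eqref{asymp_decay_symm}--\eqref{98fgvrjfbgvvfoi88}. You correctly identify that oddness at the finite stage is precisely what replaces the kernel hypothesis~\eqref{nuovissima} (which in Theorem~\ref{main_thm} is used through Proposition~\ref{comepo} to rule out the drift of the zero of~$v_{[-R,R]}$ towards~$\pm R$).

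The gap is in your proof of oddness. The symmetrization~$u\mapsto\tfrac12(u(x)-u(-x))$ lowers~$\Hc$ by convexity of~$t\mapsto t^2$, but the inequality $W\big(\tfrac{a-b}{2}\big)\le\tfrac12\big(W(a)+W(-b)\big)=\tfrac12\big(W(a)+W(b)\big)$ is exactly midpoint convexity of~$W$, which is \emph{not} assumed: \eqref{pot_deg} gives convexity only on~$[-1,-1+\xi]\cup[1-\xi,1]$, and an appeal to ``local convexification tools'' cannot repair this without changing the variational problem. Your fallback---deducing oddness a~posteriori from uniqueness---is circular here, since uniqueness (Lemma~\ref{psc0-500}\eqref{iiunoii}) is available only for the limiting class~A minimizer, whereas you need~$v_{[-R,R]}(0)=0$ \emph{before} sending~$R\to\infty$ in order to dispense with~\eqref{nuovissima}. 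The paper's argument (Proposition~\ref{symm}) avoids convexity entirely: since~$w(x):=-v_{[-R,R]}(-x)$ is another minimizer with the same external data, the min/max inequality of Lemma~\ref{leaj} combined with minimality of both~$v_{[-R,R]}$ and~$w$ forces equality, and the equality case~\eqref{jjss} then says the two functions are globally ordered; the reflection relation~$w(x)=-v_{[-R,R]}(-x)$ finally yields~$v_{[-R,R]}\equiv w$. You should replace the symmetrization step with this argument.
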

                
\begin{rem}\label{optimality}
{\rm Theorems~\ref{main_thm} and~\ref{main_thm_symm} entail that if~\eqref{newbound} and~\eqref{ricdifar} 
(which boils down to~\eqref{gufiodghfdjeryueyu4i} in Theorem~\ref{main_thm_symm})
are satisfied and~$\alpha=\beta$ and~$\gamma=\delta$, then the decay estimates for~$u^{(0)}$ and~$(u^{(0)})\rq{}$ are optimal. Notice that, thanks to the full generality of our framework, this consideration applies to the minimizers considered in~\cite[Theorem~2]{PSV13} and~\cite[Theorem~1]{CP16}.             
}
\end{rem}

We mention that some partial contributions for degenerate potential and general kernels have been provided in the earlier works~\cite{BB98,BBB98} and in~\cite{DT24}. In these papers, the authors address the existence and monotonicity of minimizers properties for energy functionals of the form~$\mathcal{E}_K$. However, in their full generality, our results for degenerate potentials are new even in the case of the fractional Laplacian.
On a similar note, our results for general kernels are new even in the case of non-degenerate potentials,
since we do not necessarily
assume that the potential is even. 
Furthermore, our results are new even when the potentials
are degenerate but their second derivative presents
a specific behavior at the potential minima (that is, when~$\alpha=\beta$ and~$\gamma=\delta$ in~\eqref{pot_deg}).
In this spirit, we believe that our general setting
is also helpful to provide a unified approach to problems in which the structures of the interaction and potential energies
possess different features which can potentially influence each other. 
 
We also point out that extending the results in~\cite{PSV13,CP16} to a~\textit{degenerate} and non-symmetric potential asks for a careful approach. More specifically, these changes in the hypotheses lead to significant modifications in the structure of the problem itself.
For instance, we point out that
hypotheses~\eqref{pot_symm} plays a major role in order to
prove the existence of particular class~A minimizers of~$\Ec$. Indeed, an even potential gives rise to an odd minimizer in bounded intervals (see Proposition~\ref{symm} below) and this allows, in Section~\ref{main_thm_proof}, to successfully perform a passage to the limit. Without this condition, we have to add to the more standard hypotheses~\eqref{krn_symm} and~\eqref{main_ellipt}
an \lq\lq{}upper semicontinuity\rq\rq{} property for~$K$, i.e.~\eqref{nuovissima} which is not present in~\cite{CP16}.
 
Also, whenever~$W\rq{}\rq{}(\pm1)>0$, decay estimates as in~\eqref{asymp_decay} can be inferred relying on barriers like the ones in~\cite[Lemma~4.1]{CP16} (that are inspired by the barriers in~\cite[Lemma~3.1]{SV14} and adapt them to the case of
more general kernels), that are well established in literature. Nevertheless, the general framework that
we work in asks for more refined barriers, inspired by the ones built in the recent paper~\cite{giovanni}.

Moreover, we stress that the estimates for the derivative of~$u^{(0)}$ cannot rely on a ``linearization argument'', as done in~\cite{CP16, PSV13},
since in our general setting~$W''$ may degenerate at the wells. Our strategy requires instead a suitable
``convexity property'', as provided by Lemma~\ref{thc_deg}, and estimates on the decay of~$u^{(0)}$ in order to employ
a suitable barrier argument.

Furthermore, we note that the decay estimates for the solution and for its derivatives,
as presented in~\eqref{asymp_decay}, \eqref{asymp_decay_lowbound}, \eqref{398r7gree},
\eqref{eq:asymp-derivata}, \eqref{asymp_decay_symm}, \eqref{asymp_decay_symm_lowbound}
\eqref{97e382d} and~\eqref{98fgvrjfbgvvfoi88},
showcase the interesting feature that the behavior of~$u^{(0)}$ and its derivatives at~$+\infty$ only depends on the structure of the potential at~$+1$ (and, likewise, the behavior of~$u^{(0)}$ and its derivatives at~$-\infty$ only depends on the structure of the potential at~$-1$):
this is not completely obvious, due to the non-local nature of the problem, and allows us to consider the case in which the potential
is degenerate at one well but non-degenerate at the other
(namely, $W''(+1)>0=W''(-1)$, or~$W''(-1)>0=W''(+1)$),
which is also a new feature with respect to the existing literature. 

\begin{rem}{\rm
As a natural direction for further investigation, one may consider the possibility of improving the decay estimates for minimizers in the presence of oscillatory potentials. 

For instance, the decay estimates in~\eqref{asymp_decay} depends solely on the parameters~$\alpha$ or~$\gamma$, which suggests that a sharp lower bound in~\eqref{asymp_decay_lowbound} should similarly depend only on the parameters~$\beta$ or~$\delta$.

Moreover, in~\cite[Proposition 6.4]{DPV15}, the authors refine the asymptotics of their minimizer to a higher order. Extending such results to our framework
would enable a deeper investigation on the evolution
of the atom dislocation function in the Peierls-Nabarro model
for crystals. We plan to address these lines of research in a forthcoming paper.}
\end{rem}

\subsection{Organization of the paper}
The rest of the paper is organized as follows. In Section~\ref{prel}, we address some regularity properties related to the fractional Allen-Cahn equation, providing the definitions of weak and pointwise solutions to~\eqref{eurl_lag_eq} and a suitable variational framework.

Section~\ref{usfbarrier} constructs a barrier used later on to prove the decay estimates in~\eqref{asymp_decay}. 

Section~\ref{barr_comp} contains three separate subsections: the first one investigating the~\textit{convexity property} of~$W$, the second one stating a strong comparison principle for~${L}_K$, while the third one providing some regularity results related to the fractional Allen-Cahn equation. 

Then, in Section~\ref{90fxd9916} we collect some asymptotic estimates for the operator~${L}_K$ in dimension~$n=1$.
Some preliminary results on 1-D minimizers are collected in Sections~\ref{min_on_int} and~\ref{usflemma}.

Finally, we prove Theorems~\ref{main_thm} and~\ref{main_thm_symm} in Section~\ref{main_thm_proof}.

     \section{Preliminaries}\label{prel}
In this section we 
provide the analytical setting that we work in and recall some
regularity results for solutions of 
     \begin{equation}\label{nonloc_eq}
         -{L}_K u =f \quad \mbox{in } \Omega\,,
     \end{equation}
    and of their associated Dirichlet problem
    \begin{equation}\label{dirichlet_pbm}
        \begin{cases}
            -{L}_K u = f & \mbox{in }  \Omega,\\
             u = g & \mbox{in } \mathscr{C}\Omega,
        \end{cases}
    \end{equation}
    where~$\Omega$ is a domain of~$\R^n$ and~$f$ and~$g$ are measurable functions. 
    
    We mention here that this section is meant to be self contained, does not contain any new results and only addresses the regularity theory that we need in the rest of the paper. A more exhaustive treatise of these topics can be found, e.~\!g. in~\cite{CS09,CS11,CP16,Ros16,RS14,Sil06}.
    
    \subsection{Analytical setting}\label{secanalytfra00}
     
In this subsection, $K$ is assumed to satisy~\eqref{krn_symm} and~\eqref{main_ellipt}. Given a domain~$\Omega \subset \Rn$, we consider the linear space
    $$
    \Hbb(\Omega) := \big\{u : \Rn \to \R \, \mbox{ measurable} \;{\mbox{ s. ~\!t. }}\; u|_{\Omega} \in L^2(\Omega) \, \mbox{ and }  \, [u]_{\Hc(\Omega)} < +\infty\big\},
    $$
    where 
     \begin{equation}\label{resis}
    [u]^2_{\Hbb(\Omega)}:=2 \Hc(u,\Omega) = \frac{1}{2}\iint_{\R^{2n}\setminus(\mathscr{C}\Omega)^2}\snr{u(x)-u(y)}^2 {K}(x-y) \, dx \, dy.
\end{equation}

Furthermore, we consider the space
    $$
    \Hbb_0 (\Omega) := \big\{u \in \Hbb(\Omega): u =0 \  \mbox{a.~\!e. in} \  \Rn \setminus \Omega\big\}.
    $$
If~$\Omega$ has continuous boundary (in the sense of~\cite[Definition~4]{fiscella}),
    we have that
    \begin{equation}\label{hfuroieytuhgvkus549860}
    \Hbb_0(\Omega) = \overline{C^\infty_0(\Omega)}^{\|\cdot\|_{\Hbb(\Omega)}},
    \end{equation}
    where
$$ \|u\|_{\Hbb(\Omega)}:=\|u\|_{L^2(\Omega)} + [u]_{\Hbb(\Omega)},$$
see~\cite[Remark~3.1]{CP16} and~\cite[Theorem~6]{fiscella}

We also set
    \begin{equation*}
    \langle u,v \rangle_{\Hbb(\Omega)}:= \frac{1}{2}\iint_{\R^{2n} \setminus (\R^n \setminus \Omega)^2} (u(x)-u(y))(v(x)-v(y)) {K}(x-y) \, dx \, dy.
    \end{equation*} 
    
\subsection{Regularity results}

    With this bit of notation, we can give the definition of weak solutions to~\eqref{nonloc_eq} and to the Dirichlet problem~\eqref{dirichlet_pbm}.

    \begin{defn}
    Given a bounded Lipschitz domain~$\Omega\subset\Rn$ and~$f \in L^2(\Omega)$, we say that~$u \in \Hbb(\Omega)$ is a weak solution to~\eqref{nonloc_eq} in~$\Omega$ if
    \begin{equation}\label{weak_formulation}
        \langle u, \phi \rangle_{\Hbb(\Omega)} = \langle f, \phi \rangle_{L^2(\Omega)} \quad\mbox{for any } \phi \in \Hbb_0(\Omega).
    \end{equation}
 
    For any~$g \in \Hbb(\Omega)$ we say that~$u \in \Hbb(\Omega)$ is a weak solution to~\eqref{dirichlet_pbm} if~\eqref{weak_formulation}
    is satisfied and~$u-g \in \Hbb_0(\Omega)$.

    If~$\Omega$ is unbounded, we say that~$u$ is a weak solution to~\eqref{nonloc_eq} if, for any Lipschitz subdomain~$\Omega' \Subset \Omega$, we have that~$u \in \Hbb(\Omega')$ and~\eqref{weak_formulation} is satisfied in~$\Omega'$.
    \end{defn}

Notice that, in light of~\eqref{hfuroieytuhgvkus549860}, one can
relax the assumption~$\phi \in \Hbb_0(\Omega)$ in~\eqref{weak_formulation}, by requiring instead that~$\phi \in C^\infty_0(\Omega)$.

    In order to strenghten the notion of solution under consideration, further notation is needed. In particular, we consider the ``tail space''
    $$
    L^1_{2s}(\Rn) := \big\{u : \Rn \to \R \ \ \mbox{measurable}\;{\mbox{ s. ~\!t. }}\; \|u\|_{L^1_{2s}(\Rn)}<+\infty\big\},
    $$
    where
    $$
    \|u\|_{L^1_{2s}(\Rn)}:= \int_{\Rn} \frac{\snr{u(x)}}{1+\snr{x}^{n+2s}}\,dx.
    $$ 
    Clearly~$L^\infty (\Rn) \subset L^1_{2s}(\Rn)$, and therefore the space~$L^1_{2s}(\Rn)$ is not empty. 
    
    \begin{defn}
     A function~$u \in L^1_{2s}(\Rn) \cap C^{2s+{\theta}}_{\rm loc}(\Omega)$, with~${\theta} >0$, is a pointwise solution to~\eqref{nonloc_eq} in~$\Omega$ if~$f$ is continuous and the equation is satisfied at any point~$x \in \Omega$. 

     Moreover, given~$g \in C^{2s+{\theta}}(\Rn\setminus\Omega)$, we say that~$u$ is a pointwise solution to~\eqref{dirichlet_pbm} if~\eqref{nonloc_eq} is satisfied in the pointwise sense and~$u\equiv g$ in~$\Rn \setminus \Omega$.
    \end{defn}
     
We mention here some regularity results for semilinear equations of the type~\eqref{nonloc_eq}.

     \begin{prop}[Proposition~3.12 in~\cite{CP16}]\label{reg_dirichlet_pbm}
         Let~$s \in (0,1)$ and let~$K$ satisfy~\eqref{krn_symm} and~\eqref{main_ellipt}. Let~$\Omega \subset \Rn$ be a bounded~$C^{1,1}$-domain, $W \in C^{1,{\vartheta}}_{\rm loc}(\R)$, for some~${\vartheta} \in (0,1)$, and~$g \in C^{2s+{\sigma}}(\Rn \setminus \Omega)$, for some~${\sigma} \in (0,2-2s)$.
         
         Let~$u \in \Hbb(\Omega) \cap L^\infty (\Rn)$ be a weak solution to
           \begin{equation*}
        \begin{cases}
            L_K u = W'(u) & \mbox{in } \, \Omega,\\
             u = g & \mbox{in } \, \mathscr{C}\Omega.
        \end{cases}
    \end{equation*}
    
    Then, $u \in C^{{\theta}}(\Rn)\cap C^{2s+{\theta}}_{\rm loc}(\Omega)$, for some~${\theta} \in (0,s)$, depending only on~$n$,~$s$,~$\lambda$,~$\Lambda$,~$\vartheta$ and~${\sigma}$.      
     \end{prop}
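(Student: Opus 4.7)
The plan is a standard two-step bootstrap. I view the equation as a linear nonlocal equation $L_K u = f$ with right-hand side $f := W'(u)$; since $u \in L^\infty(\R^n)$ and $W \in C^{1,\vartheta}_{\rm loc}$, one has $f \in L^\infty(\Omega)$. The first step extracts an initial H\"older regularity from the bounded right-hand side, and the second step iterates a Schauder-type gain using that $W'$ is Lipschitz on the range of $u$.

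For the initial H\"older regularity, I would apply interior H\"older estimates of De Giorgi--Nash--Moser type for weak solutions of $L_K u = f$ with $f \in L^\infty$, valid under the sole structural assumptions \eqref{krn_symm}--\eqref{main_ellipt} (as in \cite{CS09,Sil06} and their extensions to measurable kernels). This gives $u \in C^{\alpha_0}_{\rm loc}(\Omega)$ for some $\alpha_0 \in (0,s)$ depending only on $n,s,\lambda,\Lambda$. To upgrade this to a global H\"older estimate $u \in C^{\theta_0}(\R^n)$, I would combine the interior bound with boundary barriers built from the regularised distance function to $\partial\Omega$: the $C^{1,1}$-regularity of $\partial\Omega$ together with $g \in C^{2s+\sigma}(\R^n \setminus \Omega)$ allow one to construct super- and subsolutions that control the oscillation of $u$ on boundary layers, and a standard comparison/covering argument then transfers the interior H\"older exponent to the whole of $\R^n$.

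Once $u \in C^{\theta_0}(\R^n)$ is secured, the composition $W'(u)$ lies in $C^{\theta_0}_{\rm loc}(\Omega)$, because $W \in C^{1,\vartheta}_{\rm loc}$ makes $W'$ locally Lipschitz. I would then invoke a nonlocal Schauder-type estimate for $L_K v = h$: under \eqref{krn_symm}--\eqref{main_ellipt} and $h \in C^{\theta_0}_{\rm loc}$, solutions satisfy $v \in C^{2s+\theta}_{\rm loc}$ for a suitable $\theta \in (0,s)$. This step relies on the nonlocal Schauder theory developed, for example, in \cite{CS11,Ros16,RS14} for operators with only measurable angular dependence. Choosing $\theta \leq \alpha_0$ and small enough to avoid the integer threshold at $1$ (relevant only when $s \leq 1/2$) yields the clean statement $u \in C^{2s+\theta}_{\rm loc}(\Omega)$, and combining with the global H\"older bound gives $u \in C^\theta(\R^n)$ as well.

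The main obstacle is justifying the Schauder-type gain under the mild assumption \eqref{main_ellipt}, since $K$ is not assumed smooth, nor even comparable to a smooth kernel, away from the origin. The standard route is a freezing/perturbation argument on small balls: approximate $L_K$ at a base point by a constant-coefficient operator with comparable ellipticity constants, and control the perturbation error through the H\"older oscillation of $h = W'(u)$ together with the tail decay inherited from $u \in L^\infty(\R^n) \cap C^{\theta_0}(\R^n)$. The freedom in lowering $\theta$ absorbs any loss in the dependence on $\lambda,\Lambda,\vartheta,\sigma$. Once both the global H\"older estimate and the interior Schauder gain are in place, the conclusion $u \in C^{\theta}(\R^n) \cap C^{2s+\theta}_{\rm loc}(\Omega)$ is immediate.
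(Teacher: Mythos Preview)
The paper does not contain a proof of this proposition: it is stated in the Preliminaries section explicitly as a citation of \cite[Proposition~3.12]{CP16}, with the disclaimer that the section ``does not contain any new results and only addresses the regularity theory that we need in the rest of the paper.'' So there is no in-paper proof to compare against.

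Your outline is the standard bootstrap strategy and is broadly the right shape for how such results are established in \cite{CP16}: first obtain an initial H\"older exponent from the $L^\infty$ right-hand side via De Giorgi--Nash--Moser type arguments, then upgrade to $C^{2s+\theta}_{\rm loc}$ by a Schauder-type step using that $W'(u)$ is now H\"older. The global $C^\theta$ estimate via boundary barriers on a $C^{1,1}$ domain is also the expected mechanism. One point to be careful with: under \eqref{main_ellipt} the lower bound on $K$ holds only on $B_{r_0}$, so the kernel may vanish at long range, and the Schauder references you cite (\cite{CS11,RS14}) are written for translation-invariant kernels with specific homogeneity or global comparability; the actual argument in \cite{CP16} has to accommodate this weaker structure. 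You flag the rough-kernel issue, which is good, but the freezing/perturbation argument you sketch would need to be checked against precisely this class rather than the stable-like class. That said, as a proof plan your proposal is sound and consistent with how the cited result is obtained.
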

 
     \begin{prop}[Proposition~3.13 in~\cite{CP16}]\label{reg_entire_sol}
              Let~$s \in (0,1)$ and let~$K$ satisfy~\eqref{krn_symm} and~\eqref{main_ellipt}. Let~$W \in C^{2,{\vartheta}}_{\rm loc}(\R)$, for some~$\vartheta>0$, and~$u \in L^\infty(\Rn)$ be a weak solution to
$$
{L}_K u = W\rq{}(u) \quad\mbox{in} \ \R^n.
$$    

Then, $u \in C^{1+2s+{\theta}}(\Rn)$ for some~${\theta} >0$.
     \end{prop}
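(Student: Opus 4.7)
My plan is to proceed by a bootstrap argument on the H\"older regularity of~$u$, based on repeatedly reading the equation~$L_K u = W'(u)$ and applying interior Schauder-type estimates for~$L_K$, until the target regularity~$C^{1+2s+\theta}$ is reached.

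As a first step, I would apply Proposition~\ref{reg_dirichlet_pbm} on every ball. Fixing~$x_0 \in \R^n$ and~$R>0$, I view~$u$ as the weak solution of the Dirichlet problem on~$B_R(x_0)$ with exterior datum~$u|_{\R^n \setminus B_R(x_0)}$. Since~$u \in L^\infty(\R^n)$ and~$W'(u) \in L^\infty$ (because~$W \in C^{2,\vartheta}_{\rm loc}(\R)$ and~$u$ is bounded), a preliminary De~Giorgi--Nash--Moser-type interior H\"older estimate for~$L_K$ (valid under~\eqref{krn_symm} and~\eqref{main_ellipt}) produces~$u \in C^\alpha_{\rm loc}(\R^n)$ for some~$\alpha\in(0,s)$. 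This makes the exterior datum lie in~$C^{2s+\sigma}$ on the complement of every fixed ball (at the price of shrinking the ball by a definite factor), so that Proposition~\ref{reg_dirichlet_pbm} applies and yields~$u \in C^{2s+\theta_0}_{\rm loc}(\R^n)$ for some~$\theta_0\in(0,s)$.

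Next, I would bootstrap. Since~$u$ takes values in a compact subset of~$\R$, the composition with~$W' \in C^{1,\vartheta}_{\rm loc}(\R)$ preserves the H\"older exponent of~$u$, as long as this exponent stays below~$1$. Interior Schauder estimates for~$L_K$ then convert a~$C^\alpha_{\rm loc}$ right-hand side into~$C^{\alpha+2s}_{\rm loc}$ regularity for~$u$ (with the usual care at integer thresholds). Iterating this step finitely many times, I reach~$u \in C^{1+\theta_1}_{\rm loc}(\R^n)$ for some~$\theta_1\in(0,1)$. At this point I differentiate the composition,
\[
\partial_i\!\left(W'(u)\right) = W''(u)\, \partial_i u,
\]
and observe that, since~$W'' \in C^\vartheta_{\rm loc}(\R)$ and~$u$ is bounded and in~$C^{1}$, the factor~$W''(u)$ lies in~$C^{\min\{\theta_1,\vartheta\}}_{\rm loc}$, while~$\partial_i u$ is already in~$C^{\theta_1}_{\rm loc}$; hence~$W'(u) \in C^{1,\theta'}_{\rm loc}$ with~$\theta' := \min\{\theta_1,\vartheta\}$. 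A final application of the Schauder estimate for~$L_K$ then delivers~$u \in C^{1+2s+\theta}(\R^n)$ for some~$\theta>0$, as desired.

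The main obstacle I anticipate is the necessity of an interior Schauder estimate of the form ``$L_K u = f \in C^\alpha_{\rm loc}$ implies~$u \in C^{\alpha+2s}_{\rm loc}$'', for non-integer~$\alpha+2s$, valid under the sole assumptions~\eqref{krn_symm} and~\eqref{main_ellipt}, together with the careful bookkeeping needed at integer thresholds and to absorb the non-local tail of~$u$. These tools have by now become classical, as witnessed by the references recalled at the start of this section (e.g.~\cite{CS09,CS11,Sil06,RS14,Ros16}), and their combination with the composition estimates above makes the present bootstrap essentially routine.
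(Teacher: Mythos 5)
The paper does not prove this statement itself; it is quoted directly from \cite{CP16} (Proposition~3.13 there), so I can only assess your argument on its own terms. The overall bootstrap plan is in the right spirit, but the preliminary step is wrong as written. You obtain~$u\in C^\alpha_{\rm loc}(\R^n)$ with~$\alpha\in(0,s)$ from a De~Giorgi--Nash--Moser estimate and then claim that this ``makes the exterior datum lie in~$C^{2s+\sigma}$''; but~$\alpha<s<2s<2s+\sigma$, so the exterior trace of~$u$ is only in~$C^\alpha$, and the hypothesis of Proposition~\ref{reg_dirichlet_pbm} is not met. In fact, for a solution on all of~$\R^n$ the Dirichlet-problem detour is unnecessary: one should instead invoke a purely interior estimate for bounded solutions of~$L_K u=f$ with~$f\in L^\infty$, which directly gives~$u\in C^{2s+\theta_0}_{\rm loc}$ for some universal~$\theta_0>0$.

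The more substantive issue is the Schauder estimate ``$f\in C^\alpha_{\rm loc}\Rightarrow u\in C^{\alpha+2s}_{\rm loc}$'' that you acknowledge as the main obstacle and then defer to the references. For kernels satisfying only~\eqref{krn_symm} and~\eqref{main_ellipt} --- merely measurable, with the ellipticity lower bound imposed only inside~$B_{r_0}$ --- this is not a purely local Schauder estimate and does not come for free. The correct mechanism is the incremental-quotients device, which crucially exploits the translation invariance of~$L_K$ and of the equation: one applies the base~$L^\infty\Rightarrow C^{2s+\theta_0}$ estimate to~$v_h:=(u(\cdot+h)-u)/|h|^\gamma$, which solves~$L_K v_h=(W'(u(\cdot+h))-W'(u))/|h|^\gamma$ with a uniformly bounded right-hand side, and iterates in~$\gamma$; once~$u\in C^1$ is reached, one differentiates to see~$L_K\partial_i u=W''(u)\,\partial_i u$ and applies the base estimate once more. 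Spelling this out is the actual content of the proof rather than a ``routine'' reduction. Modulo these two points, your composition estimates and the final differentiation of~$W'(u)$ are handled correctly.
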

     
 \section{Construction of a useful barrier}\label{usfbarrier}
 
 In this section we construct an auxiliary function that will be used as a barrier to retrieve the decay estimates in~\eqref{asymp_decay} and~\eqref{asymp_decay_symm}.

Barriers like the one that we propose here have been already
introduced in the literature. But,
while in both~\cite[Lemma~8]{PSV13} and~\cite[Lemma~4.1]{CP16} the authors consider barriers that match the non-degenerancy condition of the potential~$W$ (see also~\cite{SV14}),
in our framework, $W$ can possibly decay faster at the wells~$\pm 1$ and so the need of a more refined approach. A similar barrier, constructed in the~$p$-Laplacian framework, can be found in~\cite{giovanni}.
 
      \begin{prop}\label{prop_bar}
     Let~${K}$ satisfy~\eqref{krn_symm} and~\eqref{main_ellipt}.
     
     Then, for any~$\zeta>0$ and~$m \geq2$, there exist two constants~$R_0>0$, depending on~$n$, $s$, $\Lambda$, $\zeta$ and~$m$, and~$C\geq 1$, depending on~$n$,~$s$,~$\Lambda$ and~$\zeta$, such that the following holds.
     
     For any~$R\geq R_0$ there exists a rotationally symmetric function
     \begin{equation}\label{barrier}
w \in C(\R^n,(-1,1])\,,
\end{equation}
such that
\begin{equation}\label{bar_outside_interval}
w \equiv 1 \quad\mbox{in} \  \R^n \setminus B_R\,,
\end{equation}
and
\begin{equation}\label{bar_sol}
|{L}_K w | \leq \zeta (1+w)^{m-1}\quad\mbox{in} \ B_R.
\end{equation}
Also, for any~$x \in B_R$,
\begin{equation}\label{bar_decay}
\frac{1}{C(1+R-|x |)^{\frac{2s}{m-1}}} \leq 1+w(x) \leq \frac{C}{(1+R-|x |)^{\frac{2s}{m-1}}}.
\end{equation}
\end{prop}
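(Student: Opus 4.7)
The plan is to exhibit an explicit radial capped-power barrier and verify the three conclusions by direct computation, with the bulk of the effort going into estimating $L_K w$ via a scale-adapted splitting of its defining integral.

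Let $\alpha := 2s/(m-1)$. For constants $c\geq 2$ and $R_0\geq 1$ to be fixed at the end, and for any $R\geq R_0$, I would set
\[
w(x) := \min\bigl\{1,\,-1+c\,(R+1-|x|)^{-\alpha}\bigr\}\quad\text{if }|x|\leq R,\qquad w(x):=1\quad\text{if }|x|\geq R,
\]
mildly regularized across the cap interface $\Sigma:=\{c(R+1-|x|)^{-\alpha}=2\}$ and across the origin so that $w\in C^{1,1}_{\mathrm{loc}}(B_R)$. Properties~\eqref{barrier} and~\eqref{bar_outside_interval} are then manifest (since $R+1-|x|\geq 1$ on $\overline{B_R}$ keeps $w>-1$), and the sandwich~\eqref{bar_decay} follows with $C=c$ by a direct case check across the capped and power regions.

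The core is~\eqref{bar_sol}. Fix $x\in B_R$, set $\delta:=R+1-|x|$ and split
\[
L_K w(x)=\tfrac12\int_{|z|<\delta/4}\!\delta w(x,z)K(z)\,dz+\tfrac12\int_{|z|\geq\delta/4}\!\delta w(x,z)K(z)\,dz.
\]
A direct Hessian computation on the power profile gives $\|D^2 w\|_{L^\infty(B_{\delta/4}(x))}\leq C_0\,c\,\delta^{-\alpha-2}$ with $C_0=C_0(n,s,\alpha)$; combined with $\int_{|z|<\delta/4}|z|^2 K(z)\,dz\leq C_1\Lambda\,\delta^{2-2s}$ coming from~\eqref{main_ellipt}, Taylor's theorem bounds the near-field by $C_2\,c\,\delta^{-\alpha-2s}$. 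For the far-field I would use $|w|\leq 1$ together with $\int_{|z|\geq\delta/4}K(z)\,dz\leq C_3\Lambda\,\delta^{-2s}$ to obtain a bound $C_4\Lambda\,\delta^{-2s}$. The decisive observation is that $w\leq 1$ forces $c\delta^{-\alpha}\leq 2$ on the power region; absorbing this factor into the near-field estimate collapses it to $2C_2\,\delta^{-2s}$, yielding $|L_K w(x)|\leq (2C_2+C_4\Lambda)\,\delta^{-2s}$. Dividing by $(1+w(x))^{m-1}=c^{m-1}\delta^{-2s}$ gives a ratio bounded by $(2C_2+C_4\Lambda)/c^{m-1}$, which is $\leq\zeta$ provided $c^{m-1}\geq (2C_2+C_4\Lambda)/\zeta$; this fixes $c=c(\zeta,n,s,\Lambda,m)$. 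On the capped region $(1+w)^{m-1}=2^{m-1}$ is bounded below while $|L_K w|$ remains controlled through the regularization across $\Sigma$, and enlarging $R_0$ yields~\eqref{bar_sol} there as well.

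The main obstacle will be the regularization of $w$ across $\Sigma$ and near the origin, where the radial ansatz fails to be $C^2$; both smoothings must be carried out so as to preserve simultaneously the sandwich~\eqref{bar_decay} and the Hessian estimate $\|D^2 w\|\leq C_0 c\,\delta^{-\alpha-2}$, in the spirit of the barrier construction performed in~\cite{giovanni} in the $p$-Laplace setting.
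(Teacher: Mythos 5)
Your proposal identifies exactly the right object and the right mechanism: a radially symmetric barrier whose profile behaves like $(R+1-|x|)^{-2s/(m-1)}$, with $L_K w$ estimated by splitting into a near-field term (controlled by a Hessian bound and the second moment $\int_{|z|<\rho}|z|^2K(z)\,dz\lesssim\rho^{2-2s}$) and a far-field term (controlled by $\|w\|_{L^\infty}$ and the tail mass $\int_{|z|\geq\rho}K(z)\,dz\lesssim\rho^{-2s}$), after which the key observation is that $(1+w)^{m-1}\sim c^{m-1}(R+1-|x|)^{-2s}$ exactly matches the scale $\delta^{-2s}$ coming out of both pieces. This is the same computation the paper performs, delegating the near/far splitting to \cite[Lemma~6.9]{cozzibarrier}. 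One genuine (if minor) variation: you absorb the factor $\zeta$ by inflating the amplitude $c$ so that $c^{m-1}\geq(2C_2+C_4\Lambda)/\zeta$, whereas the paper keeps the profile $v$ fixed and absorbs $\zeta$ by the anisotropic rescaling $r/R=(\zeta/c_4)^{1/(2s)}$ of the radius; both are legitimate and related by the scaling invariance of $L_K$.

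The point where your proposal stops short, however, is not a routine smoothing detail but precisely the substantial technical content of the proof. The min-cap ansatz $w=\min\{1,-1+c\,(R+1-|x|)^{-\alpha}\}$ has a Lipschitz kink across the interface $\Sigma$: for $x\in\Sigma$ the second-order increment $\delta w(x,z)$ is only $O(|z|)$, so
\[
\int_{|z|<1}|\delta w(x,z)|\,K(z)\,dz\gtrsim\lambda\int_0^{1}r^{-2s}\,dr,
\]
which diverges for all $s\geq 1/2$. Thus $L_K w$ is not even finite on $\Sigma$ in half the range of $s$, and~\eqref{bar_sol} fails before one can estimate anything. The same problem occurs at the origin, where the radial profile has nonvanishing radial derivative $g'(0)=c\alpha(R+1)^{-\alpha-1}\neq0$ and hence $w\notin C^1$ there. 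Making the profile $C^{1,1}$ across both places, while keeping it between $-1$ and $1$, preserving the two-sided power-law behaviour~\eqref{bar_decay} with an $m$-independent constant, and retaining the Hessian bound $\|D^2w\|_{L^\infty(B_{\delta/4}(x))}\lesssim\delta^{-\alpha-2}$ uniformly, is exactly what the paper achieves through the Taylor-anchored profile $h(t)=\gamma_r(\ell(t)-\ell(r/2)-\ell'(r/2)(t-r/2))$ on $[r/2,r-1)$ (which forces $h(r/2)=h'(r/2)=0$, killing the kink at the inner matching radius) and the subsequent cutoff $\eta$ flattening the outer matching near $r-1$. Flagging this as ``mild regularization, in the spirit of \cite{giovanni}'' understates the issue: it is the crux of the construction and occupies the bulk of the paper's argument.
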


\begin{proof}
We set
\begin{equation}\label{err1}
q:=\frac{2}{m-1}\,,
\end{equation}
and take~$ r_1 \geq 2^{\frac{5}{qs}}$ and~$r \geq r_1$
(notice that these choices imply that~$r\ge 4\sqrt{2}>4$).

We also set~$\ell(t):= (r-t)^{-qs}$ for any~$t \in (0,r)$ and define
\begin{equation*}
\gamma_r := \left(\ell(r-1)-\ell\left(\frac{r}2\right)-
\ell\rq{}\left(\frac{r}2\right)\left(\frac{r}2-1\right)\right)^{-1}.
\end{equation*}
We notice that
\begin{eqnarray*}
\gamma_r^{-1}& = & 1 - 2^{qs}r^{-qs} - qs 2^{qs+1} r^{-(qs+1)}\left(\frac{r}{2}-1\right)\\
& \geq & 1-2^{qs}(1+qs)r_1^{-qs}\\
&\geq & 1- 12r_1^{-qs}\\ 
& > & \frac{1}{2}.
\end{eqnarray*}
Thus, $\gamma_r$ is well defined and
\begin{equation}\label{gammaerre}
1 <\gamma_r < 2.
\end{equation}

Now, we consider the function~$h:[0,+\infty) \to [0,1]$ defined by
\begin{equation}\label{tr4egfhdjgher5u4ty3nfdsu4y653}
h(t):= \begin{cases}
0 &\mbox{if} \ t \in [0,r/2),\\
\gamma_r (\ell(t)-\ell(r/2)-\ell\rq{}(r/2)(t-r/2))  &\mbox{if} \  t \in [r/2,r-1), \\
1  &\mbox{if} \ t \in[r-1,+\infty).
\end{cases}
\end{equation}
Obviously
$$
h(r/2)=0, \qquad h\rq{}(r/2)=0 \qquad\mbox{and}\qquad h(r-1)=1,
$$
so that~$h$ is Lipschitz continuous.

Moreover, thanks to~\eqref{gammaerre}, for any~$t \in (r/2,r-1)$,
\begin{equation}\label{pesderiv}
\begin{split}
|h\rq{}(t)| & =  \gamma_r |\ell\rq{}(t)-\ell\rq{}(r/2) | \leq qs \gamma_r (r-t)^{-(qs+1)}\leq 4 (r-t)^{-(qs+1)} \\\mbox{and}\qquad
| h\rq{}\rq{}(t) | &= \gamma_r |\ell\rq{}\rq{}(t) |= qs (qs+1)\gamma_r (r-t)^{-(qs+2)} \leq 12 (r-t)^{-(qs+2)}.
\end{split}
\end{equation}

On the other hand,
$$ \lim_{t\to (r-1)_-} h'(t)=\gamma_r qs\left(1-2^{qs+1}r^{-(qs+1)}\right)\neq 0 = \lim_{t\to (r-1)_+} h'(t).$$
In light of this, we see that~$h$ is not~$ C^{1,1}$
and we now want to modify it in the interval~$(r-2,r-1)$
in order to obtain a~$C^{1,1}$ function.

For this, we take~$\eta \in C^{\infty}([0,+\infty),[0,1])$ such that~$\eta =1$
in~$(0,r-7/4)$, $\eta =0$ in~$(r-5/4,+\infty)$, $\eta\rq{} \in (-4,0)$ and~$| \eta\rq{}\rq{} | \leq 32$. Also, we set
\[ g(t):= \eta(t) h(t)+1-\eta(t) \qquad\mbox{for any} \ t \in[ 0,+\infty). \]
We notice that~$g(t) \in [h(t),1]$ and~$g$ coincides with~$h$ outside the interval~$(r-2,r-1)$. 

Moreover, we have that~$g \in C^{1,1}([0,+\infty))$ and, by~\eqref{pesderiv}, we obtain that, for any~$t \in (r-2,r-1)$,
\begin{equation*}
 |g\rq{}(t) | \leq |\eta\rq{}(t)| (1-h(t)) + |h\rq{}(t)| \mathds{1}_{(r/2,r-5/4)}(t) \leq 8
\end{equation*}
and
\begin{equation*}
| g\rq{}\rq{}(t)| \leq | \eta \rq{}\rq{}(t)| (1-h(t)) +2 |\eta\rq{}(t) h\rq{}(t) |+ | h\rq{}\rq{}(t) | \mathds{1}_{(r/2,r-5/4)}(t) \leq 76.
\end{equation*}
In particular, this and~\eqref{pesderiv} imply that there exists a constant~$c_1>0$ (e.~\!g.~$c_1:=2^7$) such that,
for any~$t \in [0,r]$,
\begin{equation}\label{fandstrst}
| g\rq{}(t) | \leq c_1 \min \big\{ (r-t)^{-(qs+1)},1 \big\} \qquad\mbox{and}\qquad | g\rq{}\rq{}(t) | \leq c_1 \min\big\{(r-t)^{-(qs+2)},1 \big\}.
\end{equation}

In addition, we claim that, for any~$t \in [0,r]$,
\begin{equation}\label{tdj96}
\min\{(r-t)^{-qs},1 \} \leq g(t) + 12 r^{-qs} \leq 18  \min \{(r-t)^{-qs},1\} \quad\mbox{for any} \ t \in [0,r].
\end{equation}
In order to show this claim, we start proving the left-hand inequality. If~$t \in [r-1,r]$, then~$g(t)=1$ and the inequality is straightforward. If instead~$t \in [0,r/2)$, then
\begin{equation*}
g(t)+12r^{-qs}= 12 r^{-qs} \geq 4 r^{-qs} \geq 2^{qs} r^{-qs} \geq (r-t)^{-qs}. 
\end{equation*}
Also, if~$t \in [r/2,r-1)$, we use~\eqref{gammaerre} to see that
\begin{eqnarray*}
g(t) & \geq & h(t) \geq (r-t)^{-qs} - 2^{qs}r^{-qs}\left(1+qs2r^{-1}(t-r/2) \right)\\
&\geq & (r-t)^{-qs} - 2^{qs}r^{-qs}\left(1+qs2r^{-1}(r/2) \right)\\
& = & (r-t)^{-qs}  - 2^{qs}r^{-qs}(1+qs)\\
& \geq &  (r-t)^{-qs}  - 12 r^{-qs}.
\end{eqnarray*}

For what concerns the right-hand inequality in~\eqref{tdj96}, if~$t \in[r-1,r]$ then we observe that
$$
g(t)+12r^{-qs} \leq 1+12 r_1^{-qs} \leq 2.
$$
If instead~$t \in[0,r-1)$, then
\begin{eqnarray*}
g(t) + 12 r^{-qs}&\leq & h(t) \mathds{1}_{(0,r-5/4)}(t) + \mathds{1}_{(r-7/4,r-1)}(t) +12 (r-t)^{-qs}\\
&\leq & 2 (r-t)^{-qs} +\left(\frac{7}{4}\right)^{qs} (r-t)^{-qs} +12 (r-t)^{-qs}\\
&\leq & 18 (r-t)^{-qs},
\end{eqnarray*}
and this completes the proof of~\eqref{tdj96}.

Let now~$v(x):= g(|x |)$ for any~$x \in \R^n$. By the properties of~$g$, we see that~$v \in C^{1,1}(\R^n)$ is radially symmetric, radially non-decreasing and satisfies~$v=0$ in~$B_{r/2}$ and~$v=1$ in~$\R^n \setminus B_{r-1}$. Also, from~\eqref{tdj96}, we infer that, for any~$x \in B_r$,
\begin{equation}\label{rtrc444}
\min\{(r-|x |)^{-qs},1 \} \leq v(x) + 12 r^{-qs} \leq 18  \min \{(r-|x |)^{-qs},1\} .
\end{equation}
Thus, recalling the definition of~$q$ in~\eqref{err1}, it follows that, for any~$x \in B_r$,
\begin{equation}\label{rtgy96}
\min\{(r-|x |)^{-2s},1 \} \leq ( v(x) + 12 r^{-qs} )^{m-1}.
\end{equation}

We now claim that there exists a constant~$c_2>0$, depending on~$n$, such that, for any~$x \in B_r$,
\begin{equation}\label{rtbtg3}
\Vert D^2 v \Vert_{L^{\infty}(B_{(\max\{(r-| x |)/2,1)\}}(x)} \leq c_2 \max \left\{ \frac{r -|x |}{2},1\right\}^{-(qs+2)}.
\end{equation}
To prove this, we observe that, by~\eqref{fandstrst}, for any~$y \in B_r \setminus \overline{B_{r/2}}$,
\begin{eqnarray}\label{444f28f}
| D^2 v(y) | &\leq & n^2 \left( |g\rq{}\rq{}(|y|)|+ 2 \frac{|g\rq{}(|y |)|}{|y |}\right) \notag\\
&\leq & 2n^2 c_1 \left( \min\left\{ (r-| y |)^{-(qs+2)},1 \right\} +| y |^{-1}\min\left\{ (r-|y |)^{-(qs+1)},1 \right\} \right) \notag\\
&\leq & 4n^2 c_1 \min\left\{ (r-|y |)^{-(qs+2)},1 \right\} ,
\end{eqnarray}
where the last inequality exploits the fact that~$|y | > r - |y |$ and~$r \geq r_1 \geq 2$.
Since~$v=0$ in~$B_{r/2}$, we can say that~\eqref{444f28f}
holds true for any~$x\in B_r$.

Also, if~$x \in B_r$ with~$r-|x |>2$ and~$y \in B_{(r-| x |)/2}(x)$, then~$y \in B_r$ and
$$
|y | \leq |y -x | + |x | \leq \frac{r-|x |}{2}+|x | = \frac{r+|x |}{2},
$$
and therefore
$$
r-|y | \geq r - \frac{r + |x |}{2} = \frac{r-|x |}{2}.
$$
As a consequence, by~\eqref{444f28f} we obtain that, for any~$y  \in B_{(r-|x |)/2}(x)$,
$$
|D^2 v(y)| \leq 4n^2 c_1 (r-|y |)^{-(qs+2)} \leq 4n^2 c_1  \left(\frac{r -|x |}{2}\right)^{-(qs+2)} = 4 n^2 c_1  \max \left\{ \frac{r -|x |}{2},1\right\}^{-(qs+2)}.
$$

If instead~$r-|x |<2$, for any~$y \in B_1(x) \cap B_r$, it holds that
$$
|D^2 v(y)| \leq 4n^2 c_1=4n^2 c_1 \max\left\{\frac{r-|x |}{2},1 \right\}^{-(qs+2)},
$$
this completing the proof of~\eqref{rtbtg3}.

Now, for any~$\sigma>0$, we define the scaled kernel
$$
K_{\sigma}(z):= \sigma^{n+2s} K(\sigma z), \quad\mbox{for a.~\!e.} \ z \in \R^n.
$$
Notice that~$K_{\sigma}$ satisfies conditions~\eqref{krn_symm} and~\eqref{main_ellipt} with the same~$\lambda,\Lambda$ of~$K$ and with~$r_0/\sigma$ in place of~$r_0$. Then, we apply~\cite[Lemma~6.9]{cozzibarrier} with~$\rho := \max\{(r-|x |)/2,1\}$ and, by~\eqref{rtgy96} and~\eqref{rtbtg3}, we obtain that, for any~$x \in B_r$,
\begin{eqnarray}\label{peccxce}
| {L}_{K_{\sigma}}v (x) | & \leq & c_3 \bigg[ \Vert v \Vert_{L^{\infty}(\R)} \max \left\{\frac{r-|x |}{2},1 \right\}^{-2s}\notag\\
&& + \Vert D^2v\Vert_{L^{\infty}(B_{\max\{(r-|x |)/2,1\}}(x))} \max\left\{\frac{r-| x|}{2},1 \right\}^{2(1-s)}\bigg] \notag \\
&\leq & c_4  \min\left\{(r-|x |)^{-2s},1 \right\} \notag\\
&\leq & c_4 (v(x)+12r^{-qs})^{m-1},
\end{eqnarray}
for some~$c_3$, $c_4>0$ depending on~$n$, $s$ and~$\Lambda$.

We are now able to construct the function~$w$. We set
\begin{equation}\label{mnbvcxasdfghj098765werty}
R_0:= \left( \frac{c_4}{\zeta}\right)^{\frac{1}{2s}} r_1
\end{equation}
and we take~$R\geq R_0$. Also, let
\begin{equation}\label{mnbvcxasdfghj098765werty2}
r:= \frac{r_1}{R_0}R \qquad\mbox{and}\qquad \beta:= 24 r^{-qs}
\end{equation}
and notice that~$r\geq r_1$ and~$\beta \in (0,1)$. Then, we define
\begin{equation}\label{mnbvcxasdfghj098765werty23}
w(x):= (2-\beta) v\left(\frac{rx}{R}\right)+\beta -1.
\end{equation}
We observe that the function~$w$ inherits all the qualitative properties of~$v$. In particular, $w$ is of class~$C^{1,1}(\R^n)$, is radially symmetric and radially non-decreasing. In addition, $w= \beta-1$ in~$B_{R/2}$ and~$w=1$ in~$\R^n \setminus B_{[(r-1)R]/r}$. 

Now, we show that~$w$ satisfies~\eqref{bar_sol}. To do this, we exploit~\eqref{peccxce}
and we recall the definitions in~\eqref{mnbvcxasdfghj098765werty},
\eqref{mnbvcxasdfghj098765werty2} and~\eqref{mnbvcxasdfghj098765werty23} to see that, for any~$x \in B_R$,
\begin{eqnarray*}
|{L}_{K} w (x) | & = & (2-\beta) \left| {L}_K \left(v \left(\frac{rx}{R}\right) \right)\right|\\
& \leq & (2-\beta)^{m-1} \left(\frac{R}{r}\right)^{-2s} \left| {L}_{K_{R/r}} v \left(\frac{rx}{R}\right) \right| \\
&\leq & c_4  \left(\frac{R}{r}\right)^{-2s} \left((2-\beta) v \left(\frac{rx}{R}\right)+24 r^{-qs}\right)^{m-1}\\
&=& \zeta \left(1+w(x)\right)^{m-1},
\end{eqnarray*}
which establishes~\eqref{bar_sol}.

We now focus on the estimates in~\eqref{bar_decay}.
For this, we set
$$
c_5:=36 \max\left\{ \frac{c_4}{\zeta}, 1 \right\} \left(1+\left(\frac{\zeta}{c_4}\right)^{\frac{1}{2s}}\right)^2
$$
and we claim that, for any~$ x \in B_{[(r-1)R]/r}$,
\begin{equation}\label{5334}
 \min\left\{ \frac{c_4}{\zeta}, 1 \right\} (R+1-|x |)^{-qs} \leq 1+w(x) \leq c_5 (R+1-|x |)^{-qs}.
\end{equation}
To show this, we preliminarly notice that~$
x \in B_{[(r-1)R]/r}$ if and only if~$r|x|/{R} <r-1$, which is equivalent to
$$ \left(\frac{c_4}{\zeta}\right)^{\frac{q}{2}}(R-|x |)^{-qs}<1.
$$
{F}rom this and~\eqref{rtrc444}, we have that
\begin{eqnarray*}
1+w(x) &\leq & 2 \left( v\left( \frac{rx}{R}\right) + 12 r^{-qs} \right) \\ 
&\leq & 36 \left(\frac{c_4}{\zeta}\right)^{\frac{q}{2}} \left(R - |x | \right)^{-qs}\\
&\leq &36 \max\left\{ \frac{c_4}{\zeta}, 1 \right\} \left( \frac{R+1-|x |}{R-|x |}\right)^{qs} (R+1-|x |)^{-qs}\\
&\leq & c_5 (R+1-|x |)^{-qs}.
\end{eqnarray*}
Similarly, we observe that
\begin{eqnarray*}
1+w(x) &\geq & \left( v\left( \frac{rx}{R}\right) + 12 r^{-qs} \right) \\ 
&\geq & \left(\frac{c_4}{\zeta}\right)^{\frac{q}{2}}  \left(R -|x | \right)^{-qs} \\
&\geq &  \min\left\{ \frac{c_4}{\zeta}, 1 \right\}  \left(R +1- |x | \right)^{-qs},
\end{eqnarray*}
thus yielding~\eqref{5334}.

Moreover, if~$x \in B_R \setminus B_{[(r-1)R]/r}$, we have that
\begin{equation}\label{p46xxd9}
2 (R+1-|x |)^{-qs} \leq 1+w(x) \leq 2 \left( 1+\left(\frac{c_4}{\zeta}\right)^{\frac{1}{2s}} \right)^{2}  (R+1-|x |)^{-qs}.
\end{equation}
Indeed,
the left-hand inequality follows from the fact that~$w=1$ in~$\R^n\setminus B_{[(r-1)R]/r}$.
The right-hand inequality can be shown as follows
\begin{eqnarray*}
1+w(x)& = &2 (R+1-|x |)^{qs}(R+1-|x |)^{-qs}\\
&\leq & 2 \left(1+\frac{R}{r}\right)^{qs}  (R+1-|x |)^{-qs}\\
& = & 2 \left( 1+\left(\frac{c_4}{\zeta}\right)^{\frac{1}{2s}} \right)^{qs}  (R+1-|x |)^{-qs}.
\end{eqnarray*}

Finally, putting together~\eqref{5334} and~\eqref{p46xxd9}, we obtain that, for any~$x \in B_R$, there exist two constants~$c_6,c_7$ depending on~$n$, $s$, $\Lambda$ and~$\zeta$ such that
\begin{equation*}
c_6 (R+1-|x |)^{-qs} \leq 1+w(x) \leq c_7 (R+1-|x |)^{-qs},
\end{equation*}
and the estimates in~\eqref{bar_decay} can be obtained setting, for instance, $C:=\max\{1,c_6^{-1},c_7\}$
and recalling~\eqref{err1}.
\end{proof}

\begin{rem}
{\rm
The barrier reported in~\cite[Lemma 4.1]{CP16} can be retrieved setting~$m=2$ in Proposition~\ref{prop_bar}.
}
\end{rem}
   
     \section{Some auxiliary results}\label{barr_comp}
     In this section we prove some useful statements, that will be employed to prove the main results of the paper.
     
\subsection{On the potential~$W$}\label{otpot}

We provide here a regularity result on the potential~$W$. We stress that this result plays a fundamental role in the study of the decay estimates in~\eqref{asymp_decay}. 

   \begin{lemma}\label{thc_deg}
Let~$W:\R \to\R$ be a function satisfying~\eqref{pot_reg} and~\eqref{pot_deg}.

Then, for any~$r$, $t \in [-1,-1+\xi]$ with~$r\leq t$, it holds
\begin{equation}\label{5968hht}
\begin{split}
&\displaystyle\frac{C_1}{\alpha(\alpha-1)}\left((1+t)^{\alpha}-(1+r)^{\alpha}\right) \leq W(t)-W(r) \leq \frac{C_2}{\beta(\beta-1)}\left((1+t)^{\beta}-(1+r)^{\beta}\right) \\ &\mbox{and}\\
&\displaystyle\frac{C_1}{\alpha-1} \left( (1+t)^{\alpha-1} - (1+r)^{\alpha-1}\right) \leq W\rq{}(t) - W\rq{}(r) \leq \frac{C_2}{\beta-1} \left((1+t)^{\beta-1}-(1+r)^{\beta-1}\right).
\end{split}
\end{equation}
Moreover, for any~$r$, $t \in [1-\xi,1]$ with~$r\leq t$, it holds
\begin{equation}\label{eoi3j59}
\begin{split}
&  \displaystyle\frac{C_3}{\gamma(\gamma-1)} \left((1-t)^{\gamma}-(1-r)^{\gamma}\right)\leq W(t) - W(r) \leq \displaystyle\frac{C_4}{\delta(\delta-1)} \left( (1-t)^{\delta} - (1-r)^{\delta}\right) \\& \mbox{and} \\
&\displaystyle\frac{C_3}{\gamma-1} \left( (1-r)^{\gamma-1} - (1-t)^{\gamma-1}\right) \leq W\rq{}(t) - W\rq{}(r) \leq \frac{C_4}{\delta-1} \left((1-r)^{\delta-1}-(1-t)^{\delta-1}\right).
\end{split}
\end{equation}
\end{lemma}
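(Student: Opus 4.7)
My strategy is a direct two-step application of the fundamental theorem of calculus, leveraging the vanishing conditions $W(\pm 1)=W'(\pm 1)=0$ from \eqref{pot_reg} together with the polynomial two-sided control on $W''$ near the wells provided by \eqref{pot_deg}. The two wells can be treated independently, and I would carry out the argument near $-1$ in full, with the $+1$ case following by the same pattern.

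The first step proves the inequalities for $W'$. I fix $r\leq t$ in $[-1,-1+\xi]$ and simply write $W'(t)-W'(r)=\int_r^t W''(s)\,ds$; the integrand is then sandwiched between $C_1(1+s)^{\alpha-2}$ and $C_2(1+s)^{\beta-2}$ via \eqref{pot_deg}, and one direct integration yields the $W'$ bounds in \eqref{5968hht}. The same recipe at the well $+1$, applied with $C_3(1-s)^{\gamma-2}\leq W''(s)\leq C_4(1-s)^{\delta-2}$, produces the $W'$ bounds in \eqref{eoi3j59}. The regularity $W\in C^{2,\vartheta}_{\rm loc}$ from \eqref{pot_reg} ensures that $W''$ is continuous, so the fundamental theorem of calculus applies directly.

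The second step bootstraps from the $W'$ inequalities to the inequalities for $W$. Specializing the previous step to $r=-1$ and using $W'(-1)=0$ yields the pointwise envelope
\[ \frac{C_1(1+s)^{\alpha-1}}{\alpha-1}\leq W'(s)\leq \frac{C_2(1+s)^{\beta-1}}{\beta-1}\qquad \text{for any } s\in[-1,-1+\xi], \]
and then $W(t)-W(r)=\int_r^t W'(s)\,ds$ combined with a second integration delivers the $W$ inequalities in \eqref{5968hht}. For the well at $+1$, an analogous envelope for $W'(s)$ on $[1-\xi,1]$ arises from $W'(1)=0$ via the identity $W'(s)=-\int_s^1 W''(u)\,du$ together with the bounds on $W''$; one further integration then yields the $W$ inequalities in \eqref{eoi3j59}.

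There is no substantive obstacle to the proof: the argument reduces to careful bookkeeping. The one delicate point is the well at $+1$, where $W$ is decreasing and $W'\leq 0$ on $[1-\xi,1]$; here one must maintain the correct direction of each inequality when writing $W'(s)=-\int_s^1 W''(u)\,du$ and when integrating from $r$ to $t$, since the upper bound on $W''$ produces a lower bound on $W'$ (and conversely) in this case.
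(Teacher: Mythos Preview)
Your proposal is correct and follows essentially the same route as the paper: first integrate the two-sided bound on $W''$ via the fundamental theorem of calculus to obtain the $W'$ inequalities, then specialize at the well (using $W'(\pm1)=0$) and integrate once more to obtain the $W$ inequalities. The paper likewise only details the upper bound near $-1$ and defers the remaining cases to symmetry, so your added remark about sign bookkeeping near $+1$ is a helpful but inessential elaboration.
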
  
   
\begin{proof} 
We will only show~\eqref{5968hht}, since~\eqref{eoi3j59} can be
proved in a similar way. Moreover, we will focus on the upper bounds
in~\eqref{5968hht}, being the lower bounds based on analogous calculations.

Let~$r$, $t \in [-1,-1+\xi]$ and~$r\leq t$. Then, thanks to~\eqref{pot_reg}
and~\eqref{pot_deg}, we can apply the Fundamental Theorem of Calculus and obtain that
\begin{eqnarray*}
W\rq{}(t) - W\rq{}(r) & = & \int_{r}^t W\rq{}\rq{}(\sigma) \, d\sigma \leq C_2 \int_{r}^t (1+\sigma)^{\beta-2} \, d\sigma \\
& = & \frac{C_2}{\beta-1} \left((1+t)^{\beta-1} - (1+r)^{\beta-1}\right) \end{eqnarray*}
which establishes the upper bound in the second line of~\eqref{5968hht}.

We now show the upper bound in the first line of~\eqref{5968hht}.
For this, we use the Fundamental Theorem of Calculus 
and the fact that~$W\rq{}(-1)=0$ to write
\begin{eqnarray*}&& W(t)-W(r)=\int_r^t W\rq{}(\sigma) \, d\sigma
=\int_r^t \big(W\rq{}(\sigma)- W\rq{}(-1)\big)\, d\sigma.
\end{eqnarray*}
Since~$\sigma\ge r\ge-1$, we can exploit the upper bound
in the second line of~\eqref{5968hht} and we obtain that
$$W(t)-W(r)\le \frac{C_2}{\beta-1}
\int_r^t  (1+\sigma)^{\beta-1} \, d\sigma
=  \frac{C_2}{\beta(\beta-1)} \left( (1+t)^{\beta} - (1+r)^{\beta}\right),
$$
as desired.
\end{proof}

We point out that
in the case~$\alpha=\beta=\gamma=\delta=2$, Lemma~\ref{thc_deg} implies that
\begin{equation*}
   W\rq{}(t) \geq W\rq{}(r)+c(t-r) \quad\mbox{for any } r\leq t , \; r,t \in [-1,-1+c]\cup[1-c,1]
\end{equation*}
 and for some~$c >0$. This inequality is the so called~\textit{convexity property} for a \textit{non-degenerate} potential W and is used in~\cite{PSV13, CP16} in order to compute the decay estimates for the minimizers.   

    \subsection{Barriers and comparison principle}\label{bars}
 In this subsection we recall a strong comparison principle related to the operator~${L}_K$, that will be extensively used in the proof of
 the estimates in~\eqref{asymp_decay} and~\eqref{eq:asymp-derivata}.

   \begin{prop}[Proposition~4.4 in~\cite{CP16}]\label{comp}
     Let~$s\in (0,1)$ and~${K}$ satisfy~\eqref{krn_symm} and~\eqref{main_ellipt}. Let~$f_1$, $f_2: \R^n \times \R \to \R$ be two continuous functions. 
     
     Let~$\Omega$ be a domain of~$\R^n$ and~$v$, $w \in L^{\infty}(\R^n) \cap C^{2s+\theta}(\Omega)$, for some~$\theta>0$, be such that
   \begin{equation*}
   	\begin{cases}
   	L_K v \leqslant f_1(\cdot,v) & \mbox{in} \ \Omega,\\
   	L_K w \geqslant f_2(\cdot,w)  & \mbox{in} \ \Omega,\\
   	v \geqslant w & \mbox{in} \ \R.
   	\end{cases}
   \end{equation*}
   
   Suppose also that, for any~$x \in \Omega$,
   \begin{equation*}
   f_1(x,w(x)) \leqslant f_2(x,w(x)).
   \end{equation*}
  If there exists a point~$x_{0} \in \Omega$ such that~$v(x_{0}) =w(x_{0})$, then~$v \equiv w$ in the whole~$\Omega$.
   \end{prop}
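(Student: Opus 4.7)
The plan is to consider the non-negative difference $u := v - w$, which attains its global minimum value $0$ at the interior point $x_0 \in \Omega$, and to exploit the tension between the pointwise sign of $L_K u(x_0)$ forced by the differential inequalities and that forced by the non-singular representation of $L_K$ at a minimum. This will yield $u \equiv 0$ on a ball around $x_0$, and the full conclusion will follow by a standard open/closed argument inside the connected set $\Omega$.

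More precisely, I would first use $v \ge w$ on $\R^n$ together with $v(x_0) = w(x_0)$ to conclude that $u \ge 0$ globally and $u(x_0) = 0$. Then, at $x_0$, combining the two differential inequalities with the structural assumption $f_1(\cdot, w(\cdot)) \le f_2(\cdot, w(\cdot))$ in $\Omega$ yields
\begin{equation*}
L_K u(x_0) = L_K v(x_0) - L_K w(x_0) \le f_1(x_0, v(x_0)) - f_2(x_0, w(x_0)) = f_1(x_0, w(x_0)) - f_2(x_0, w(x_0)) \le 0,
\end{equation*}
where in the middle equality I used $v(x_0) = w(x_0)$. On the other hand, since $u \in L^\infty(\R^n)$ and $u \in C^{2s+\theta}$ near $x_0$, the non-singular representation recalled in the Introduction gives
\begin{equation*}
L_K u(x_0) = \frac{1}{2}\int_{\R^n}\bigl(u(x_0+z) + u(x_0-z) - 2 u(x_0)\bigr) K(z)\,dz,
\end{equation*}
and the integrand is non-negative (as $u \ge 0$ and $u(x_0) = 0$), so $L_K u(x_0) \ge 0$. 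Comparing the two, $L_K u(x_0) = 0$ and the integrand must vanish for almost every $z$ where $K(z) > 0$. Invoking the lower bound in \eqref{main_ellipt}, one has $K > 0$ a.e.\ on $B_{r_0}$, and since $u(x_0+z)$ and $u(x_0-z)$ are each non-negative, they individually vanish for a.e.\ $z \in B_{r_0}$; the continuity of $u$ (inherited from $v$ and $w$, which are $C^{2s+\theta}$ in $\Omega$ and bounded on $\R^n$) then upgrades this to $u \equiv 0$ on $B_{r_0}(x_0)$.

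To finish, I would argue by connectedness. The set $Z := \{x \in \Omega : u(x) = 0\}$ is relatively closed in $\Omega$ by continuity of $u$, and the argument above, applied at any $x_1 \in Z$ (still a global minimum of $u$, since $u \ge 0$ on $\R^n$), shows $B_{r_0}(x_1) \cap \Omega \subset Z$, so $Z$ is also relatively open. As $\Omega$ is a domain, hence connected, and $x_0 \in Z$, we conclude $Z = \Omega$, i.e.\ $v \equiv w$ on $\Omega$. I do not expect a genuine obstacle here: the proof is the standard nonlocal strong maximum principle adapted to a two-sided comparison setting. The only delicate points worth being explicit about are (i) that the pointwise comparison of the two expressions for $L_K u(x_0)$ is legitimate precisely because of the $C^{2s+\theta}_{\loc}(\Omega)$ hypothesis together with $v,w \in L^\infty(\R^n)$, and (ii) that the lower bound in \eqref{main_ellipt} is used in its localized form on $B_{r_0}$, so that propagation of zeros happens at a uniform scale independent of the point $x_1 \in Z$, which is what makes the openness step work.
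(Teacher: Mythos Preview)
Your argument is correct and is precisely the standard nonlocal strong-maximum-principle proof one would expect. Note that the paper does not actually prove this proposition; it is quoted from \cite{CP16} (their Proposition~4.4) without proof, so there is no paper proof to compare against. One minor imprecision: from the vanishing of the integrand you only get $u=0$ a.e.\ on $B_{r_0}(x_0)$, and continuity of $u$ is only guaranteed in $\Omega$, so strictly speaking you obtain $u\equiv 0$ on $B_{r_0}(x_0)\cap\Omega$ rather than on all of $B_{r_0}(x_0)$; but you already use exactly this in the connectedness step, so the argument is sound as written.
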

  
    \subsection{Basic properties of the fractional Allen-Cahn equation}
We mention here two handful lemmata. The first one poses sufficient conditions in order for the fractional Allen-Cahn equation to behave well under limit procedures. The second one relates the solutions of the fractional Allen-Cahn equation to class~A minimizers of~$\Ec$.

 \begin{lemma}[Lemma~4.8 in~\cite{CP16}]\label{eq_stab}
      Let~$s\in (0,1)$ and~${K}$ satisfy~\eqref{krn_symm} and~\eqref{main_ellipt}. Let~$W \in C^{1,\vartheta}_{\rm loc}(\R)$ for some~$\vartheta \in (0,1)$. 
      
      Let~$\Omega \subset \Rn$ be a Lipschitz domain and let~$(v_j)_{j \in \N} \subset \Hbb(\Omega)\cap L^\infty(\Rn)$ be a sequence of weak solutions of
       $$
       L_K v_j = W'(v_j) \qquad \mbox{in} \ \Omega,
       $$
such that there exists~$c>0$ such that, for any~$j\in\mathbb{N}$,
       $$
       [v_j]_{\Hbb(\Omega)} + \|v_j\|_{L^\infty(\Rn)} \leqslant c.
       $$
       
       Suppose also that~$v_j$ converges to a function~$v$ uniformly on compact subset of~$\Rn$, as~$j\to+\infty$. 
       
       Then, $v \in \Hbb(\Omega)\cap L^\infty(\Rn)$ is a weak solution of
       $$
       L_K v = W'(v) \qquad \mbox{in} \ \Omega.
       $$
       \end{lemma}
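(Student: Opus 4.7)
The plan is to pass to the limit $j \to \infty$ in the weak formulation
\[
\langle v_j, \phi\rangle_{\Hbb(\Omega)} = \int_\Omega W'(v_j)\,\phi\,dx \quad \text{for every } \phi \in C^\infty_0(\Omega),
\]
which, by the density in~\eqref{hfuroieytuhgvkus549860}, characterizes weak solutions. First I would record that the uniform convergence on compact subsets together with the uniform $L^\infty$-bound immediately gives $\|v\|_{L^\infty(\R^n)} \le c$. Next, Fatou's lemma applied to the pointwise a.e.\ convergence of $|v_j(x)-v_j(y)|^2 K(x-y)$ on $\R^{2n}\setminus(\R^n\setminus\Omega)^2$ yields $[v]_{\Hbb(\Omega)} \le \liminf_j [v_j]_{\Hbb(\Omega)} \le c$, so that $v \in \Hbb(\Omega) \cap L^\infty(\R^n)$ is a legitimate candidate solution.

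For the right-hand side, I would fix $\phi \in C^\infty_0(\Omega)$ and set $S := \mathrm{supp}(\phi) \Subset \Omega$. Since $v_j \to v$ uniformly on the compact set $S$, and $W' \in C^{0,\vartheta}_{\rm loc}(\R)$ is uniformly continuous on the common range $[-c,c]$, we obtain $W'(v_j) \to W'(v)$ uniformly on $S$, and therefore $\int_\Omega W'(v_j)\phi\,dx \to \int_\Omega W'(v)\phi\,dx$.

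For the left-hand side, I would observe that the integrand defining $\langle v_j, \phi\rangle_{\Hbb(\Omega)}$ is supported on $\{(x,y) : x \in S \text{ or } y \in S\}$, because $\phi$ vanishes outside $S$. I would then split this set via a small parameter $\epsilon > 0$. On the far-from-diagonal piece $\{|x-y|\geq \epsilon\}$, the integrand is pointwise bounded by $4c\|\phi\|_\infty K(x-y)$, which is integrable there because $K(z) \le \Lambda|z|^{-n-2s}$ controls the tail for $|y|$ large while $K$ is essentially bounded on the bounded annular set $\{\epsilon \le |z| \le R\}$; dominated convergence combined with the pointwise convergence $v_j \to v$ then handles this region. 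On the diagonal piece $\{|x-y|< \epsilon\}$, the pointwise control is insufficient to overcome the singularity, so I would use $|\phi(x)-\phi(y)| \le \|\nabla\phi\|_\infty |x-y|$ together with Cauchy--Schwarz:
\[
\iint_{\{x\in S,\,|x-y|<\epsilon\}} |v_j(x)-v_j(y)|\,|x-y|\,K(x-y)\,dx\,dy \le \sqrt{2}\,[v_j]_{\Hbb(\Omega)}\left(|S|\int_{|z|<\epsilon} |z|^2 K(z)\,dz\right)^{1/2}.
\]
Using $[v_j]_{\Hbb(\Omega)} \le c$ and $\int_{|z|<\epsilon}|z|^2 K(z)\,dz \le C\epsilon^{2-2s}$, this contribution is $O(\epsilon^{1-s})$ uniformly in $j$, and the same estimate applies to the limit $v$. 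Sending first $j \to \infty$ and then $\epsilon \to 0$ yields $\langle v_j, \phi\rangle_{\Hbb(\Omega)} \to \langle v, \phi\rangle_{\Hbb(\Omega)}$, which combined with the right-hand side convergence concludes the argument.

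The main obstacle is exactly the interplay between the singularity of $K$ at the diagonal and the fact that only uniform-on-compacts (hence a.e.) convergence is available for $(v_j)$, rather than strong convergence in $\Hbb(\Omega)$. It is resolved by the Cauchy--Schwarz splitting above, which transfers the diagonal singularity to the smooth factor $\phi$ and exploits the uniform seminorm bound to make the near-diagonal contribution negligible.
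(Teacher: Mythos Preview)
Your argument is correct and follows the standard route for this stability lemma. The paper does not supply its own proof of this statement: it is quoted verbatim as Lemma~4.8 of~\cite{CP16} (with the analogous result for the fractional Laplacian kernel attributed to~\cite[Lemma~7]{PSV13}), so there is no in-paper proof to compare against. For what it is worth, the proof in~\cite{PSV13} proceeds exactly along your lines: Fatou for membership of the limit in the energy space, uniform convergence plus continuity of~$W'$ for the right-hand side, and a near/far-from-diagonal splitting of the bilinear form with Cauchy--Schwarz controlling the singular part via the uniform seminorm bound and the Lipschitz regularity of~$\phi$. One cosmetic remark: you do not need the aside that ``$K$ is essentially bounded on the bounded annular set $\{\epsilon\le|z|\le R\}$''; the global upper bound $K(z)\le\Lambda|z|^{-n-2s}$ from~\eqref{main_ellipt} already gives $\int_{\{|z|\ge\epsilon\}}K(z)\,dz<\infty$ in one stroke, which is all dominated convergence requires on the far piece.
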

    
    \begin{lemma}[Theorem~3 in~\cite{CP16}]\label{lemma:degiorgi} Let~$n \geq 1$, $s\in (0,1)$ and~${K}$ satisfy~\eqref{krn_symm} and~\eqref{main_ellipt}. Let~$W$ be such that
   \[W(\pm 1) = W\rq{}(\pm 1) =0. \] 
   
    Let~$u: \Rn \to (-1,1)$ belong to~$C^{1+2s+{\theta}}(\Rn)$, for some~${\theta} >0$. Suppose that~$u$ is a solution of
    	\begin{equation*}
    		L_K u = W'(u)\qquad \mbox{in} \ \Rn,
    	\end{equation*}
    	satisfying 
    	$$
    	\partial_{x_n} u(x) \geqslant 0, \qquad {\mbox{for all }} x \in \Rn,
    	$$
    	and
    	$$
    	\lim_{x_n\to \pm \infty} u(x',x_n) = \pm 1, \qquad {\mbox{for all }}  x' \in \R^{n-1}.
    	$$
    
    	Then, $u$ is a class~A minimizer of~$\Ec$.
    \end{lemma}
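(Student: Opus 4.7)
The plan is to establish minimality via the sliding/foliation method of Alberti–Ambrosio–Cabr\'e, adapted to the nonlocal setting through the strong comparison principle of Proposition~\ref{comp}. First I would fix a bounded Lipschitz domain $\Omega\subset\Rn$ and a test $\phi\in C_0^\infty(\Omega)$, set $v:=u+\phi$, and note that since $W\ge 0$ with $W(\pm 1)=0$ one may truncate $v$ into $[-1,1]$ without increasing $\Ec(v;\Omega)$. The family of translates $u^t(x',x_n):=u(x',x_n+t)$ inherits from $u$ the property of being a $C^{1+2s+\theta}(\Rn)$ solution of $\Lc w=W'(w)$ with the same asymptotics as $x_n\to\pm\infty$. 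Since $u^t\ge u^s$ for $t>s$ by hypothesis but $u^t\not\equiv u^s$ (thanks to the opposite asymptotics), Proposition~\ref{comp} upgrades the ordering to a pointwise strict one: $u^t>u^s$ everywhere whenever $t>s$.

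Using the uniform asymptotics on $\overline\Omega$, for $T>0$ large we have $u^{-T}\le v\le u^T$ everywhere on $\Rn$. I then consider the constrained minimization of $\Ec(w;\Omega)$ over admissible $w$ with $w=u$ on $\Rn\setminus\Omega$ and $u^{-T}\le w\le u^T$ pointwise in $\Rn$. Existence of a minimizer $w^\star$ follows by the direct method in $\Hbb(\Omega)$ (coercivity and weak lower semicontinuity, together with the uniform $L^\infty$ bound from the obstacle). At those points of $\Omega$ where $w^\star$ stays strictly between the two obstacles, the Euler–Lagrange equation $\Lc w^\star=W'(w^\star)$ holds, and the regularity Propositions~\ref{reg_dirichlet_pbm}--\ref{reg_entire_sol} lift this to the pointwise sense.

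The decisive step is to show $w^\star\equiv u$. I would set $s^\star:=\inf\{s\ge 0:u^s\ge w^\star \text{ in } \Rn\}\in[0,T]$ and argue that if $s^\star>0$ there is a touching point $x_0\in\overline\Omega$. Strict monotonicity of the foliation gives $u^{s^\star}>u=w^\star$ outside $\Omega$, so $x_0\in\Omega$; the parallel sliding of the lower barrier rules out the extremal case $s^\star=T$ (in which $w^\star$ would coincide with $u^T$ at $x_0$). Hence $w^\star$ lies strictly within the obstacles at $x_0$ and solves the Euler–Lagrange equation there. Proposition~\ref{comp} applied to the pair $u^{s^\star}\ge w^\star$ touching at $x_0$ then forces $u^{s^\star}\equiv w^\star$, contradicting strict inequality outside $\Omega$. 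Thus $s^\star=0$, yielding $w^\star\le u$; the symmetric sliding from below gives $w^\star\ge u$, and so $w^\star=u$. Since $v$ is admissible for the constrained problem by the choice of $T$, one concludes $\Ec(u;\Omega)=\Ec(w^\star;\Omega)\le\Ec(v;\Omega)$, and the arbitrariness of $\Omega$ and $\phi$ yields the class~A minimality.

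The main obstacle I anticipate is a compactness issue in the sliding: a priori the touching point defining $s^\star$ could escape to infinity along the $x_n$-direction, where both $u^{s^\star}$ and $u$ approach $\pm 1$ and the gap $u^{s^\star}-u$ can collapse. The standard remedy is an $\varepsilon$-perturbation, replacing $u^s$ by $u^s+\varepsilon$ to guarantee a uniform positive gap at infinity (since outside $\Omega$ we have $u^s+\varepsilon>w^\star+\varepsilon/2$ for small $\varepsilon$), forcing the touching to occur in a compact region, and then letting $\varepsilon\to 0$. A secondary technical point is verifying that the constrained minimizer is regular enough in the interior of the obstacle set for Proposition~\ref{comp} to apply, which follows by treating the obstacles $u^{\pm T}$ as smooth solutions of the same semilinear equation and invoking Propositions~\ref{reg_dirichlet_pbm}--\ref{reg_entire_sol}.
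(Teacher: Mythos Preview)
The paper does not supply its own proof of this statement; it is quoted as Theorem~3 of \cite{CP16} (with \cite[Theorem~1]{PSV13} indicated for the case of the pure fractional Laplacian). Your sliding/foliation strategy via the strict ordering of the translates $u^t$ and the strong comparison principle (Proposition~\ref{comp}) is precisely the method used in those references, so the overall approach is correct and standard.

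One technical point deserves care. After truncating $v=u+\phi$ into $[-1,1]$, the function $v$ may actually attain the values $\pm 1$ somewhere in $\Omega$, whereas $u^{\pm T}$ takes values in the \emph{open} interval $(-1,1)$ for every $T$. Hence your claimed inclusion $u^{-T}\le v\le u^{T}$ can fail, and the truncated $v$ need not be admissible for your constrained problem. The cleaner route, which is the one in \cite{PSV13,CP16}, avoids the obstacle formulation altogether: take the \emph{unconstrained} Dirichlet minimizer $w^\star$ in $\Omega$ with exterior datum $u$ (existence and $|w^\star|\le 1$ from Lemma~\ref{en_minimiz}), upgrade to the strict bound $|w^\star|<1$ on $\overline\Omega$ via the equation $L_K w^\star=W'(w^\star)$ together with $W'(\pm1)=0$ and the strong maximum principle, and then run your sliding argument verbatim to conclude $w^\star=u$. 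Since $u+\phi$ is automatically a competitor for the unconstrained problem, this yields $\Ec(u;\Omega)\le\Ec(u+\phi;\Omega)$ directly, and the passage through obstacles and their interior regularity becomes unnecessary.
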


We also refer the reader to~\cite[Lemma 7]{PSV13} and~\cite[Theorem 1]{PSV13} for the proofs of, respectively, Lemmata~\ref{eq_stab} and~\ref{lemma:degiorgi}, when the kernel~${K}(z) :=\snr{z}^{-n-2s}$.

\section{Asymptotic estimates for~${L}_K$ in dimension~$n=1$}\label{90fxd9916}
 
In this section we collect asymptotic results regarding the operator~${L}_K$, defined in~\eqref{main_op}, in dimension~$n=1$. They will be used in the proofs of Theorems~\ref{main_thm} and~\ref{main_thm_symm}.

\begin{prop}\label{tonto}
Let~$\kappa\in (0, +\infty)$ and~$\sigma$, $\tau\in(1,+\infty)$. Let~$K$ satisfy~\eqref{krn_symm} and~\eqref{main_ellipt}. 

Let~$\phi \in C^{\infty}(\R)$ be such that
\begin{equation*}
\phi(x)= \begin{cases}
|x|^{-\sigma} &\mbox{if } x<-\kappa, \\
|x|^{-\tau} &\mbox{if } x>\kappa
\end{cases}
\end{equation*}
and
\begin{equation}\label{addipotesi76}
{\mbox{$\phi(x)\ge\gamma$ for all~$x\in[-\kappa,\kappa]$, for some~$\gamma\in(0,+\infty)$.}}
\end{equation}

Then,
\begin{equation}\label{yuuy_secs}
\lim_{|x | \to  +\infty} |x|^{1+2s} {L}_K\phi(x) \leq \Lambda \left( \frac{\kappa^{1-\sigma}}{\sigma-1}+ \int_{-\kappa}^{\kappa} \phi(y) \, dy+\frac{\kappa^{1-\tau}}{\tau-1}\right),
\end{equation}
where~$\Lambda$ is the quantity appearing in~\eqref{main_ellipt}.
\end{prop}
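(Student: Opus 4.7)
The strategy is to split the principal-value integral defining $L_K\phi(x)$ according to whether $|y-x|<|x|/2$ (near-diagonal piece) or $|y-x|\geq |x|/2$ (far-diagonal piece): the near piece will be controlled by Taylor expansion combined with the symmetry \eqref{krn_symm}, whereas the far piece will be treated by dropping a non-positive term, using the upper bound \eqref{main_ellipt}, and applying dominated convergence.

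First, observe that since $\sigma,\tau>1$, a direct computation yields $\phi\in L^1(\R)$ with
\[
\int_\R \phi(y)\,dy = \frac{\kappa^{1-\sigma}}{\sigma-1} + \int_{-\kappa}^{\kappa}\phi(y)\,dy + \frac{\kappa^{1-\tau}}{\tau-1},
\]
which is precisely $1/\Lambda$ times the right-hand side of~\eqref{yuuy_secs}. For $|x|>4\kappa$, the window $\{|y-x|<|x|/2\}$ lies inside $\{|y|>\kappa\}$, so on this window $\phi$ coincides with a smooth power $|y|^{-\tau}$ (for $x>0$) or $|y|^{-\sigma}$ (for $x<0$), yielding $\|\phi''\|_{L^\infty(\{|y-x|<|x|/2\})}\leq C|x|^{-(\tau+2)}$ (respectively $C|x|^{-(\sigma+2)}$). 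Expanding $\phi(y)-\phi(x)=\phi'(x)(y-x)+R(y,x)$ with $|R(y,x)|\leq C|x|^{-(\tau+2)}(y-x)^2$ and noting that the linear term has zero principal value by \eqref{krn_symm},
\[
\left|\mathrm{PV}\int_{|y-x|<|x|/2}(\phi(y)-\phi(x))K(x-y)\,dy\right| \leq C|x|^{-(\tau+2)}\int_{|z|<|x|/2}\frac{\Lambda z^2}{|z|^{1+2s}}\,dz,
\]
which is $O(|x|^{-(\tau+2s)})$ and hence negligible after multiplication by $|x|^{1+2s}$ since $\tau>1$ (and symmetrically for $x\to-\infty$).

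For the far piece, the non-positive term $-\phi(x)\int_{|y-x|\geq|x|/2}K(x-y)\,dy$ can be dropped, and using $K(x-y)\leq \Lambda|x-y|^{-(1+2s)}$ one obtains
\[
|x|^{1+2s}\!\int_{|y-x|\geq|x|/2}(\phi(y)-\phi(x))K(x-y)\,dy \leq \Lambda\int_{|y-x|\geq|x|/2}\phi(y)\left(\frac{|x|}{|x-y|}\right)^{1+2s}dy.
\]
The decisive observation is that on the far region one has the uniform bound $|x|/|x-y|\leq 2$, so the integrand is dominated by $2^{1+2s}\phi(y)\in L^1(\R)$. For every fixed $y\in\R$, as $|x|\to+\infty$ both the indicator of the far region and the ratio $|x|/|x-y|$ converge to $1$; Lebesgue's dominated convergence then yields
\[
\lim_{|x|\to\infty}\int_{|y-x|\geq|x|/2}\phi(y)\left(\frac{|x|}{|x-y|}\right)^{1+2s}dy = \int_\R\phi(y)\,dy,
\]
and combining this with the vanishing of the near-diagonal piece gives exactly \eqref{yuuy_secs}.

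The main subtlety lies in the dominated-convergence step: one cannot pass $|x|/|x-y|$ to the limit $1$ uniformly, since for $y$ close to $|x|/2$ or $3|x|/2$ this ratio equals $2$. The saving grace is that the uniform bound $2^{1+2s}$ on the entire far region together with the pointwise convergence to $1$ suffice for Lebesgue's theorem to recover the sharp constant $\Lambda$. The Taylor cancellation in the near piece, by contrast, is standard once one recognizes that for $|x|$ large the ball $\{|y-x|<|x|/2\}$ lies entirely in the smooth tail region of $\phi$.
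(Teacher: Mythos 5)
Your proof is correct, and it takes a genuinely different route from the paper's.

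The paper decomposes $L_K\phi(x)$ according to the three regions of the definition of $\phi$, writing~$L_K\phi(x) = A_K(x) + B_K(x) + D_K(x)$ with domains of integration~$(-\infty,-\kappa)$, $(-\kappa,\kappa)$, $(\kappa,+\infty)$, then applies the scaling substitution~$y=|x|\theta$ and a further three-way split of~$A_K$ by the size of~$\theta$, estimating each piece by asymptotic expansion. Your decomposition is instead based on the distance to the singularity of the kernel: a near-diagonal piece~$\{|y-x|<|x|/2\}$ handled by a second-order Taylor expansion of~$\phi$ around~$x$ (with the linear term killed by the symmetry~\eqref{krn_symm}), and a far-diagonal piece~$\{|y-x|\geq |x|/2\}$ handled by dropping the non-positive~$-\phi(x)\int K$ term, bounding~$K$ by~\eqref{main_ellipt}, and invoking dominated convergence against the uniform majorant~$2^{1+2s}\phi\in L^1(\R)$. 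Both are valid; your approach is shorter, avoids any change of variables and the subsequent case analysis, and makes transparent that the constant~$\Lambda\int_\R\phi$ arises from the far field alone while the PV near the singularity contributes only a lower-order~$O(|x|^{1-\tau})$ (resp.~$O(|x|^{1-\sigma})$) error. The paper's approach, by contrast, produces more explicit intermediate expansions, which it reuses in the companion lower-bound Proposition~\ref{tontobis}. One small stylistic caveat common to both: the stated~$\lim$ is effectively a~$\limsup$ unless one shows the limit exists, which neither argument does nor needs.
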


\begin{proof}
We will only prove~\eqref{yuuy_secs} as~$x \to -\infty$, being
the limit as~$x\to +\infty$ analogous.

For any~$x<-2\kappa$, we set
\begin{equation}\label{52BIS}
\begin{split}
&A_K(x):= PV_x \int_{-\infty}^{-\kappa}\left( \phi(y)-\phi(x)\right) K(x-y)\, dy, \\ &B_K(x):= \int_{-\kappa}^{\kappa} \left( \phi(y) -\phi(x)\right) K(x-y) \, dy\\
{\mbox{and }}\qquad
&D_K(x):= \int_{\kappa}^{+\infty}\left( \phi(y)-\phi(x)\right) K(x-y)\, dy.
\end{split}
\end{equation}
In the following computations we will omit the principal value notation,
for the sake of readability.

We claim that 
\begin{equation}\label{x_secs}
\lim_{x \to -\infty} |x |^{1+2s} A_K(x) \leq  \frac{\Lambda\kappa^{1-\sigma}}{\sigma-1}.
\end{equation}
In order to show this, we apply the change of variable~$y:=|x|\theta$ and compute
\begin{equation*}
A_K(x) = \int_{-\infty}^{-\kappa} \left( | y |^{-\sigma} - |x |^{-\sigma} \right) K(x-y)\, dy 
= |x |^{1-\sigma} 
\int_{-\infty}^{-{\kappa}/{|x|}} \left( |\theta |^{-\sigma}-1\right) K\left( x (1+\theta)\right) \, d\theta.
\end{equation*}
We also set
\begin{equation}\label{53BIS}
\begin{split}
& A_{K,I}(x):= |x |^{1-\sigma} \int_{-\infty}^{-{3}/{2}}\left( |\theta |^{-\sigma}-1\right) K\left( x (1+\theta)\right) \, d\theta, \\
& A_{K,II}(x):=  |x |^{1-\sigma} 
\int_{-{3}/{2}}^{-{1}/{2}}\left(|\theta |^{-\sigma}-1\right) K\left( x (1+\theta)\right) \, d\theta \\ \mbox{and}\qquad
&A_{K,III}(x):=  | x |^{1-\sigma} \int_{-{1}/{2}}^{-{\kappa}/{| x|}}\left( | \theta |^{-\sigma}-1\right) K\left( x (1+\theta)\right) \, d\theta.
\end{split}
\end{equation}
Exploiting~\eqref{main_ellipt}, we obtain that
\begin{equation}\label{equno}
|A_{K,I}(x)| \leq \Lambda |x |^{-(\sigma+2s)}\int_{-\infty}^{-{3}/{2}}\frac{1- | \theta |^{-\sigma}}{| 1+\theta|^{1+2s}} \, d\theta \leq C |x |^{-(\sigma+2s)},
\end{equation}
for some~$C>0$, depending on~$s$, $\sigma$ and~$\Lambda$.

Moreover, we check that
\begin{equation}\label{jgsp}
| A_{K,II}(x)| \leq C |x |^{-(\sigma+2s)}.
\end{equation}
To this aim, we use the change of variable~$z:= 1+\theta$ and
the symmetry of~$K$ in~\eqref{krn_symm}
to see that
\begin{equation*}
\begin{split}
&\int_{-{3}/{2}}^{-{1}/{2}} \left( |\theta|^{-\sigma}-1 \right) K \left( x(1+\theta)\right) \, d\theta 
= \int_{-{1}/{2}}^{{1}/{2}} \left( ( 1-z)^{-\sigma}-1\right) K(xz) \, dz \\
&\qquad=\int_{-{1}/{2}}^{{1}/{2}} \left( \sigma z +O(|z |^2) \right) K(xz) \, dz 
=\int_{-{1}/{2}}^{{1}/{2}} O(|z |^2) K(xz) \, dz.
\end{split}
\end{equation*}
As a consequence,
\begin{equation*}
\begin{split}
&\left| \int_{-{3}/{2}}^{-{1}/{2}} \left( |\theta|^{-\sigma}-1 \right) K \left( x(1+\theta)\right) \, d\theta \right|
= \left|\int_{-{1}/{2}}^{{1}/{2}} O(|z |^2)  K(xz) \, dz\right|\\
&\qquad\qquad\leq C\Lambda |x |^{-(1+2s)}  \int_{-{1}/{2}}^{{1}/{2}}| z|^{1-2s}\, dz \leq C |x |^{-(1+2s)} ,
\end{split}
\end{equation*} up to renaming~$C>0$.
This leads to~\eqref{jgsp}, as desired.

Furthermore, observing that~$\theta^{-\sigma}>1$ if~$\theta \in (0,1/2)$, we obtain that
\begin{equation*}
\begin{split}
A_{K,III}(x) & \leq \Lambda |x |^{-(\sigma+2s)} 
\int_{-{1}/{2}}^{-{\kappa}/{|x|}} \frac{ |\theta|^{-\sigma}-1}{ (1+\theta)^{1+2s}} \, d\theta\\
&= \Lambda |x |^{-(\sigma+2s)} 
\left(
\int_{{\kappa}/{| x |}}^{{1}/{2}} 
\theta^{-\sigma}\big(1+O(\theta)\big) \, d\theta -
\int_{{\kappa}/{|x |}}^{{1}/{2}} \frac{d\theta}{(1+\theta)^{1+2s}}\right)\\
&= \Lambda |x |^{-(\sigma+2s)} 
\left( \frac{1}{1-\sigma}\left(2^{\sigma-1} -\kappa^{1-\sigma}|x|^{\sigma-1} \right) +O(|x|^{\sigma-2})
-\int_{{\kappa}/{|x |}}^{{1}/{2}} \frac{d\theta}{(1+\theta)^{1+2s}}\right).
\end{split}
\end{equation*}
Combining this with~\eqref{equno} and~\eqref{jgsp},
and noticing that~$A_K(x) = A_{K,I}(x)+ A_{K,II}(x)+ A_{K,III}(x)$,
we obtain~\eqref{x_secs}. 

Now, we want to show that
\begin{equation}\label{fcpmd}
\lim_{x \to - \infty} |x |^{1+2s} B_K(x) \leq  \Lambda \int_{-\kappa}^{\kappa} \phi(y)\, dy.
\end{equation}
For this, we take~$\gamma$ as in~\eqref{addipotesi76}
and we suppose that~$|x|>\gamma^{-1/\sigma}$.
In this way, we have that~$\phi(x)=|x|^{-\sigma}<\gamma\le\phi(y)$
for all~$y\in(-\kappa,\kappa)$,
thanks to~\eqref{addipotesi76}.
Therefore, by~\eqref{main_ellipt},
\begin{eqnarray*}
|x|^{1+2s}B_K(x)&=& |x|^{1+2s}
\int_{-\kappa}^{\kappa} \left( \phi(y) -\phi(x)\right) K(x-y) \, dy\\
&\le& \Lambda \int_{-\kappa}^{\kappa} \left( \phi(y)-\phi(x) \right) \left| 1 -\frac{y}{x} \right|^{-(1+2s)}\, dy.
\end{eqnarray*}
We can now take the limit as~$x\to-\infty$ and use the
Dominated Convergence Theorem to deduce~\eqref{fcpmd}.

Furthermore, we claim that
\begin{equation}\label{fcpmt}
\lim_{x \to - \infty} |x |^{1+2s} D_K(x) \leq  \frac{\Lambda\kappa^{1-\tau}}{\tau-1},
\end{equation}
Indeed, the change of variable~$y:=|x|\theta$ 
and~\eqref{main_ellipt} give that
\begin{equation*}
\begin{split}
D_K(x)&= \int_{\kappa}^{+\infty} (y^{-\tau} - |x |^{-\sigma})K(x-y) \, dy \\
&= |x |\int_{{\kappa}/{|x|}}^{+\infty}\big(
|x |^{-\tau} \theta^{-\tau} - |x |^{-\sigma}\big) K(x (1+\theta))\, d\theta\\
&\leq \Lambda |x |^{-2s} \int_{{\kappa}/{|x|}}^{+\infty} \frac{|x |^{-\tau} \theta^{-\tau} +|x |^{-\sigma}}{(1+\theta)^{1+2s}}\, d\theta\\
&\leq  \Lambda |x |^{-(\tau+2s)} 
\int_{{\kappa}/{|x|}}^{+\infty}  \frac{\theta^{-\tau}}{(1+\theta)^{1+2s}}\, d\theta
+\Lambda | x |^{-(\sigma+2s)}
\int_{{\kappa}/{| x|}}^{+\infty} \frac{d\theta}{(1+\theta)^{1+2s}}.
\end{split}\end{equation*}
We notice that
\begin{eqnarray*}
\int_{{\kappa}/{|x|}}^{+\infty}  \frac{\theta^{-\tau}}{(1+\theta)^{1+2s}}\, d\theta
&=&\int_{{\kappa}/{|x|}}^{1/2}
\theta^{-\tau}\big(1+O(\theta)\big)\, d\theta
+ \int_{1/2}^{+\infty}  \frac{\theta^{-\tau}}{(1+\theta)^{1+2s}}\, d\theta\\
&=&\frac{1}{1-\tau}\left( 2^{\tau-1}-\kappa^{1-\tau}|x|^{\tau-1}
\right) + O(|x|^{\tau-2})
+ \int_{1/2}^{+\infty}  \frac{\theta^{-\tau}}{(1+\theta)^{1+2s}}\, d\theta.
\end{eqnarray*}
As a result,
\begin{eqnarray*}
|x|^{1+2s}D_K(x)&\le& 
\Lambda|x|^{1-\tau}\left(\frac{1}{1-\tau}\left( 2^{\tau-1}-\kappa^{1-\tau}|x|^{\tau-1}
\right) + O(|x|^{\tau-2})
+ \int_{1/2}^{+\infty}  \frac{\theta^{-\tau}}{(1+\theta)^{1+2s}}\, d\theta\right)\\&&\qquad
+\Lambda |x |^{1-\sigma}
\int_{{\kappa}/{|x|}}^{+\infty} \frac{d\theta}{(1+\theta)^{1+2s}},
\end{eqnarray*}
which gives the desired claim in~\eqref{fcpmt}.

Thus, since~$L_K\phi(x)= A_K(x) + B_K(x) + D_K(x)$,
combining together~\eqref{x_secs}, \eqref{fcpmd} and~\eqref{fcpmt} yields the thesis.
\end{proof}    

\begin{prop}\label{tontobis}
Let~$\kappa\in (0, +\infty)$ and~$\sigma$, $\tau\in(1,+\infty)$. Let~$K$ satisfy~\eqref{krn_symm} and~\eqref{newbound}. 

Let~$\phi \in C^{\infty}(\R)$ be such that
\begin{equation*}
\phi(x)= \begin{cases}
|x|^{-\sigma} &\mbox{if } x<-\kappa, \\
|x|^{-\tau} &\mbox{if } x>\kappa
\end{cases}
\end{equation*}
and
\begin{equation}\label{addipotesi76bis}
{\mbox{$\phi(x)\ge\gamma$ for all~$x\in[-\kappa,\kappa]$, for some~$\gamma\in(0,+\infty)$.}}
\end{equation}

Then,
\begin{equation}\label{yuuy_secsbis}
\lim_{|x | \to  +\infty} |x|^{1+2s} {L}_K\phi(x) \geq \lambda \left( \frac{\kappa^{1-\sigma}}{\sigma-1}+ \int_{-\kappa}^{\kappa} \phi(y) \, dy+\frac{\kappa^{1-\tau}}{\tau-1}\right),
\end{equation}
where~$\lambda$ is the quantity appearing in~\eqref{newbound}.
\end{prop}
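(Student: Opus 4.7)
The plan is to adapt the proof of Proposition~\ref{tonto} by using, whenever the sign of the integrand permits, the lower bound $K(z)\geq \lambda/|z|^{1+2s}$ provided by~\eqref{newbound}. I focus on the limit $x\to-\infty$, since the case $x\to+\infty$ is analogous. Following~\eqref{52BIS}, write $L_K\phi(x)=A_K(x)+B_K(x)+D_K(x)$. By superadditivity of $\liminf$ it is enough to show that $\liminf_{x\to-\infty}|x|^{1+2s}A_K(x)\geq \lambda\kappa^{1-\sigma}/(\sigma-1)$, that $\liminf_{x\to-\infty}|x|^{1+2s}B_K(x)\geq \lambda\int_{-\kappa}^{\kappa}\phi(y)\,dy$, and that $\liminf_{x\to-\infty}|x|^{1+2s}D_K(x)\geq \lambda\kappa^{1-\tau}/(\tau-1)$.

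For $A_K$ I keep the decomposition $A_K=A_{K,I}+A_{K,II}+A_{K,III}$ from~\eqref{53BIS}. Since~\eqref{newbound} implies~\eqref{main_ellipt}, the absolute-value bounds $|A_{K,I}(x)|, |A_{K,II}(x)|\leq C|x|^{-(\sigma+2s)}$ obtained in the proof of Proposition~\ref{tonto} are still available, and each vanishes after multiplication by $|x|^{1+2s}$ since $\sigma>1$. On $(-1/2,-\kappa/|x|)$ the factor $|\theta|^{-\sigma}-1$ is strictly positive, so exploiting $K(x(1+\theta))=K(|x|(1+\theta))\geq \lambda|x|^{-(1+2s)}(1+\theta)^{-(1+2s)}$ yields
\begin{equation*}
A_{K,III}(x)\geq \lambda|x|^{-(\sigma+2s)}\int_{-1/2}^{-\kappa/|x|}\frac{|\theta|^{-\sigma}-1}{(1+\theta)^{1+2s}}\,d\theta,
\end{equation*}
and repeating the Taylor expansion of $(1+\theta)^{-(1+2s)}$ near $\theta=0$ used in Proposition~\ref{tonto} extracts the leading-order term $\lambda\kappa^{1-\sigma}/((\sigma-1)|x|^{1+2s})$.

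For $B_K$, since $\phi(x)=|x|^{-\sigma}\to 0$ and $\phi(y)\geq \gamma$ on $[-\kappa,\kappa]$ by~\eqref{addipotesi76bis}, the integrand $(\phi(y)-\phi(x))K(x-y)$ is nonnegative for $|x|$ sufficiently large. Applying $K\geq \lambda/|\cdot|^{1+2s}$ then gives
\begin{equation*}
|x|^{1+2s}B_K(x)\geq \lambda\int_{-\kappa}^{\kappa}(\phi(y)-\phi(x))\left(\frac{|x|}{|x-y|}\right)^{1+2s}dy,
\end{equation*}
and since $|x|/|x-y|\to 1$ uniformly in $y\in[-\kappa,\kappa]$ with a uniformly bounded integrand, the Dominated Convergence Theorem yields the required bound.

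The most delicate term is $D_K$, because the integrand $(y^{-\tau}-|x|^{-\sigma})K(x-y)$ changes sign at $y=|x|^{\sigma/\tau}$. However, $|x-y|\geq|x|-\kappa$ throughout the domain of integration, so $K$ is non-singular there and I can split
\begin{equation*}
D_K(x)=\int_{\kappa}^{+\infty}y^{-\tau}K(x-y)\,dy\;-\;|x|^{-\sigma}\int_{\kappa}^{+\infty}K(x-y)\,dy.
\end{equation*}
Estimating the first (positive) integral from below via $K\geq\lambda/|\cdot|^{1+2s}$ and using the substitution $y=|x|\theta$ reduces the analysis to $\int_{\kappa/|x|}^{+\infty}\theta^{-\tau}(1+\theta)^{-(1+2s)}\,d\theta$; expanding near $\theta=0$ as in the proof of Proposition~\ref{tonto} contributes $\lambda\kappa^{1-\tau}/((\tau-1)|x|^{1+2s})$ at leading order. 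For the second integral, the upper bound $K\leq \Lambda/|\cdot|^{1+2s}$ (still available under~\eqref{newbound}) gives a contribution of order $|x|^{-(\sigma+2s)}$, which is $o(|x|^{-(1+2s)})$ because $\sigma>1$. The main obstacle is precisely this sign change in $D_K$: the splitting above bypasses it by isolating a purely positive integral, to which the lower bound on $K$ can be applied directly, together with a small correction controlled by the upper bound on $K$.
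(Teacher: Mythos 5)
Your proposal is correct and follows essentially the same route as the paper: the decomposition $L_K\phi=A_K+B_K+D_K$ from~\eqref{52BIS}, the reuse of the absolute-value bounds on $A_{K,I}$, $A_{K,II}$ from Proposition~\ref{tonto}, the lower bound on $K$ for the positive-integrand pieces ($A_{K,III}$, $B_K$, and the first integral in the split of $D_K$), and the upper bound on $K$ for the subtracted $|x|^{-\sigma}$ part of $D_K$. The only cosmetic difference is that you keep the integration domain $(\kappa,+\infty)$ for that subtracted term where the paper enlarges it to $(0,+\infty)$, which does not affect the estimate.
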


The proof of Proposition~\ref{tontobis} is similar to the one of Proposition~\ref{tonto}, but requires some care in the estimates
of some terms, therefore we provide here below the details for the facility of the reader.

\begin{proof}[Proof of Proposition~\ref{tontobis}]
We will only prove~\eqref{yuuy_secsbis} as~$x \to -\infty$, being
the limit as~$x\to +\infty$ analogous.

For any~$x<-2\kappa$, we take~$A_K(x)$, $B_K(x)$ and~$D_K(x)$ as in~\eqref{52BIS}.

We claim that 
\begin{equation}\label{x_secsbis}
\lim_{x \to -\infty} |x |^{1+2s} A_K(x) \geq  \frac{\lambda\kappa^{1-\sigma}}{\sigma-1}.
\end{equation}
In order to show this, in the notation of~\eqref{53BIS},
we write~$A_K(x)= A_{K,I}(x)+A_{K,II}(x)+A_{K,III}(x)$.
Moreover, using the estimates in~\eqref{equno} and~\eqref{jgsp}, we see that
\begin{equation}\label{STAR67839287654}
|A_K(x)|\ge A_{K,III}(x)-|A_{K,I}(x)|-|A_{K,II}(x)|\ge  A_{K,III}(x)-C | x |^{-(\sigma+2s)},
\end{equation}
for some~$C>0$, depending on~$s$, $\sigma$ and~$\Lambda$ (being~$\Lambda$ the quantity appearing in~\eqref{newbound}).

Furthermore, since~$\theta^{-\sigma}>1$ if~$\theta \in (0,1/2)$, we obtain that
\begin{equation*}
\begin{split}
A_{K,III}(x) & \geq \lambda |x |^{-(\sigma+2s)} 
\int_{-{1}/{2}}^{-{\kappa}/{|x|}} \frac{ |\theta|^{-\sigma}-1}{ (1+\theta)^{1+2s}} \, d\theta\\
&= \lambda | x |^{-(\sigma+2s)} 
\left(
\int_{{\kappa}/{|x |}}^{{1}/{2}} 
\theta^{-\sigma}\big(1+O(\theta)\big) \, d\theta -
\int_{{\kappa}/{|x |}}^{{1}/{2}} \frac{d\theta}{(1+\theta)^{1+2s}}\right)\\
&= \lambda |x |^{-(\sigma+2s)} 
\left( \frac{1}{\sigma-1}\left( \kappa^{1-\sigma}|x|^{\sigma-1}-2^{\sigma-1} \right) +O(|x|^{\sigma-2})
-\int_{{\kappa}/{|x |}}^{{1}/{2}} \frac{d\theta}{(1+\theta)^{1+2s}}\right).
\end{split}
\end{equation*}
Therefore, from this and~\eqref{STAR67839287654} we deduce~\eqref{x_secsbis}.
 
Now, we show that
\begin{equation}\label{fcpmdbis}
\lim_{x \to - \infty} |x |^{1+2s} B_K(x) \geq  \lambda \int_{-\kappa}^{\kappa} \phi(y)\, dy.
\end{equation}
For this, we take~$\gamma$ as in~\eqref{addipotesi76bis}
and we suppose that~$|x|>\gamma^{-1/\sigma}$.
In this way, we have that~$\phi(x)=|x|^{-\sigma}<\gamma\le\phi(y)$
for all~$y\in(-\kappa,\kappa)$,
thanks to~\eqref{addipotesi76bis}.
Therefore, by~\eqref{newbound},
\begin{eqnarray*}
|x|^{1+2s}B_K(x)&=& |x|^{1+2s}
\int_{-\kappa}^{\kappa} \left( \phi(y) -\phi(x)\right) K(x-y) \, dy\\
&\geq& \lambda \int_{-\kappa}^{\kappa} \left( \phi(y)-\phi(x) \right) \left| 1 -\frac{y}{x} \right|^{-(1+2s)}\, dy.
\end{eqnarray*}
We can now take the limit as~$x\to-\infty$ and use the
Dominated Convergence Theorem to deduce~\eqref{fcpmdbis}.

Also, we claim that
\begin{equation}\label{fcpmtbis}
\lim_{x \to - \infty} |x |^{1+2s} D_K(x) \geq  \frac{\lambda\kappa^{1-\tau}}{\tau-1},
\end{equation}
Indeed, the change of variable~$y:=|x|\theta$ and the positivity of the kernel give that
\begin{equation}\label{9hgv9bis}
\begin{split}
D_K(x)&= \int_{\kappa}^{+\infty} (y^{-\tau} - |x |^{-\sigma})K(x-y) \, dy \\
&= |x |\int_{{\kappa}/{|x|}}^{+\infty}\big(
|x |^{-\tau} \theta^{-\tau} - |x |^{-\sigma}\big) K(x (1+\theta))\, d\theta\\
&\geq |x |^{1-\tau} \int_{k /|x |}^{+\infty} \theta^{-\tau} K(x(1+\theta)) \, d\theta - |x |^{1-\sigma} \int_{0}^{+\infty} K(x(1+\theta)) \, d\theta.
\end{split}\end{equation}
Now, from the upper bound in~\eqref{newbound} we infer that
\begin{eqnarray*}
&&|x |^{1-\sigma} \int_{0}^{+\infty} K(x(1+\theta)) \, d\theta
\le\Lambda  |x |^{1-\sigma} \int_{0}^{+\infty} \frac{d\theta}{|x|^{1+2s}(1+\theta)^{1+2s}}=C|x|^{-(\sigma+2s)},
\end{eqnarray*} for some~$C>0$ depending on~$s$ and~$\Lambda$.

Moreover, from the lower bound in~\eqref{newbound}, we deduce that
\begin{equation*}
\begin{split}&
|x |^{1-\tau} \int_{k /|x |}^{+\infty} \theta^{-\tau} K(x(1+\theta)) \, d\theta 
\ge \lambda  |x |^{1-\tau} \int_{k /|x |}^{+\infty} \frac{\theta^{-\tau}}{| x|^{1+2s}(1+\theta)^{1+2s}} \, d\theta 
\\&\qquad =\lambda |x |^{-(\tau+2s)} \left( \int_{k /|x |}^{\frac{1}{2}} \theta^{-\tau} \left(1+O(\theta)\right) \, d\theta +\int_{\frac{1}{2}}^{+\infty} \frac{\theta^{-\tau}}{(1+\theta)^{1+2s}} \, d\theta \right) \\
 &\qquad = \lambda |x |^{-(\tau+2s)} \left( \frac{k^{1-\tau}}{\tau-1} | x |^{\tau-1} -\frac{2^{\tau-1}}{\tau-1}+\int_{\frac{1}{2}}^{+\infty} \frac{\theta^{-\tau}}{(1+\theta)^{1+2s}} \, d\theta \right)+  O(|x |^{-2-2s}) .
\end{split}
\end{equation*}
Plugging the last two displays into~\eqref{9hgv9bis} give that
\begin{equation*}
\begin{split}
D_K(x)&\geq \lambda |x |^{-(\tau+2s)} \left( \frac{k^{1-\tau}}{\tau-1} |x |^{\tau-1} -\frac{2^{\tau-1}}{\tau-1}+\int_{\frac{1}{2}}^{+\infty} \frac{\theta^{-\tau}}{(1+\theta)^{1+2s}} \, d\theta \right)\\&\qquad\qquad+  O(|x |^{-2-2s})-C|x|^{-(\sigma+2s)}.
\end{split}\end{equation*}
Multiplying by~$|x|^{1+2s}$ and taking the limit show~\eqref{fcpmtbis}.

Thus, since~$L_K\phi(x)= A_K(x) + B_K(x) + D_K(x)$,
combining together~\eqref{x_secsbis},~\eqref{fcpmdbis} and~\eqref{fcpmtbis} yields the thesis.
\end{proof}    

\section{Minimizing the energy in intervals}\label{min_on_int}
In this section, we deal with the problem of minimizing the energy~$\Ec$ in bounded intervals~$I \subset \R$.

More precisely, in Lemma~\ref{min_open_inter} below, we provide the existence of a minimizer~$v_{I}$ and present an upper bound for its energy as a function of the lenght of the interval~$ I$. Moreover, in Propositions~\ref{eupro} and~\ref{symm} we investigate respectively the monotonicity property and the oddness of~$v_I$ (in case~\eqref{pot_symm} is in force). Also, Proposition~\ref{comepo} studies the asymptotic behavior of the minimizer~$v_{[0,R]}$ in the interval~$[0,R]$ as~$R\to+\infty$.
\medskip

In the rest of this section, the analytic framework introduced
in Section~\ref{secanalytfra00} is assumed.

\begin{lemma}\label{leaj}
Let~$K$ satisfy~\eqref{krn_symm} and~\eqref{main_ellipt}.
Let~$\Omega$ be a domain of~$\R^n$ and ~$u$ and~$v$ be two measurable functions in~$\Hbb(\Omega)$.

Then
\begin{equation}\label{oas}
\Ec({\rm min}\{u,v\},\Omega) + \Ec({\rm max}\{u,v\},\Omega) \leq \Ec(u, \Omega) + \Ec(v,\Omega).
\end{equation}
Moreover, equality holds in~\eqref{oas} if and only if
\begin{equation}\label{jjss}
{\mbox{either }} \quad u(x) \leq v(x) \quad\mbox{or}\quad u(x)\ge v(x) \quad\mbox{for any } x \in \R^n.
\end{equation}
\end{lemma}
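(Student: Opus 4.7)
The plan is to split the energy $\mathcal{E}_K=\mathcal{H}+\mathcal{P}$ and verify \eqref{oas} separately on the potential and the kinetic part. The potential part produces an exact identity: at each point $x\in\Omega$ the multiset $\{\min\{u(x),v(x)\},\max\{u(x),v(x)\}\}$ coincides with $\{u(x),v(x)\}$, so
$$W(\min\{u,v\}(x))+W(\max\{u,v\}(x))=W(u(x))+W(v(x)),$$
and integrating yields $\mathcal{P}(\min\{u,v\},\Omega)+\mathcal{P}(\max\{u,v\},\Omega)=\mathcal{P}(u,\Omega)+\mathcal{P}(v,\Omega)$. Hence the potential contributions cancel from both sides of \eqref{oas}, and the whole question reduces to the corresponding inequality for $\mathcal{H}$.

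For the kinetic part I would establish the elementary pointwise inequality
$$(\min\{a,b\}-\min\{c,d\})^2+(\max\{a,b\}-\max\{c,d\})^2\leq (a-c)^2+(b-d)^2,$$
valid for every $a,b,c,d\in\mathbb{R}$, with equality if and only if $(a-b)(c-d)\geq 0$. This is a short case analysis on the signs of $a-b$ and $c-d$: when they agree (or one vanishes) both sides coincide, while when they are strictly opposite a direct expansion shows that the right-hand side exceeds the left-hand side by exactly $2(b-a)(c-d)>0$. Applying this pointwise with $a=u(x)$, $b=v(x)$, $c=u(y)$, $d=v(y)$, multiplying by the non-negative kernel $K(x-y)$, and integrating over $\mathbb{R}^{2n}\setminus(\mathbb{R}^n\setminus\Omega)^2$ gives
$$\mathcal{H}(\min\{u,v\},\Omega)+\mathcal{H}(\max\{u,v\},\Omega)\leq \mathcal{H}(u,\Omega)+\mathcal{H}(v,\Omega),$$
which, combined with the potential identity, proves \eqref{oas}.

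For the equality case, the implication ``\eqref{jjss} $\Rightarrow$ equality in \eqref{oas}'' is immediate since under \eqref{jjss} the pair $(\min\{u,v\},\max\{u,v\})$ is just a relabelling of $(u,v)$ at each point. Conversely, if \eqref{oas} is an equality then the integrated pointwise kinetic inequality must be saturated, forcing $(u(x)-v(x))(u(y)-v(y))\geq 0$ for a.e. pair $(x,y)$ in the integration domain at which $K(x-y)>0$. Assuming by contradiction that \eqref{jjss} fails, both sets $A:=\{u>v\}$ and $B:=\{u<v\}$ have positive Lebesgue measure; using the lower bound $K\geq \lambda\,\mathds{1}_{B_{r_0}}|\cdot|^{-(n+2s)}$ from \eqref{main_ellipt} together with a Lebesgue-density argument at points of $A$ and $B$ that accumulate close to each other, one produces a set of positive $2n$-dimensional measure of pairs $(x,y)\in A\times B$ inside $\mathbb{R}^{2n}\setminus(\mathbb{R}^n\setminus\Omega)^2$ with $|x-y|<r_0$, on which the pointwise kinetic inequality is strict---contradicting the assumed equality.

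The main obstacle is precisely this last step: converting pointwise strict inequality on a positive-measure set into strict integral inequality against $K$, since \eqref{main_ellipt} only guarantees positivity of $K$ on the ball $B_{r_0}$. This is handled by a localization/density argument producing pairs in $A\times B$ close enough to lie in $B_{r_0}$, with a little extra care to ensure the produced pairs meet $\mathbb{R}^{2n}\setminus(\mathbb{R}^n\setminus\Omega)^2$, which can be arranged by working with Lebesgue points of $A\cap\Omega$ and $B\cap\Omega$ (or with the cross-slot pairs when only one of these traces has positive measure).
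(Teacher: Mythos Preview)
Your proof of the inequality \eqref{oas} is correct and essentially identical to the paper's: reduce to the kinetic part via the potential identity, then apply the pointwise inequality $(\min\{a,b\}-\min\{c,d\})^2+(\max\{a,b\}-\max\{c,d\})^2\le (a-c)^2+(b-d)^2$, established by the same case split on the signs of $a-b$ and $c-d$.

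For the equality characterization, the paper simply asserts that \eqref{jjss} ``follows by inspection'' of the pointwise computation. You go further and correctly isolate the real difficulty: under \eqref{main_ellipt} alone, $K$ is only known to be positive on $B_{r_0}$, so equality in the integral merely forces $(u(x)-v(x))(u(y)-v(y))\ge 0$ for a.e.\ pair $(x,y)$ in $\R^{2n}\setminus(\Rn\setminus\Omega)^2$ with $|x-y|<r_0$. However, your proposed Lebesgue-density fix does not close the gap: there is no reason the sets $A=\{u>v\}$ and $B=\{u<v\}$ contain points within distance $r_0$ of each other, nor that either meets $\Omega$. Concretely, take $n=1$, $\Omega=(-1,1)$, a kernel $K$ supported in $B_{r_0}$ with $r_0$ small, $u\equiv 0$, and $v$ vanishing on a large neighbourhood of $\Omega$ but taking both signs far out in $\R\setminus\Omega$; then all four energies in \eqref{oas} coincide while \eqref{jjss} fails. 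Thus the ``only if'' direction, as stated, cannot be proved from \eqref{krn_symm}--\eqref{main_ellipt} alone; it requires either $K>0$ a.e.\ (as under \eqref{newbound}) or the extra structural information, present in the paper's actual applications (Propositions~\ref{eupro} and~\ref{symm}), that $u-v$ vanishes outside a bounded interval contained in the domain, where a chaining argument through overlapping $r_0$-balls does suffice. Your density sketch would work in those settings, but not in the generality the lemma claims.
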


In~\cite[Lemma~3]{PSV13}, the authors prove the same result in the particular case~$K(z):=|z|^{-n-2s}$. In~\cite[Lemma~3.2]{CozziValdNONLINEARITY}, this result is then stated for rough kernels,
but we provide here the full proof, since we add the claim in~\eqref{jjss}.

\begin{proof}[Proof of Lemma~\ref{leaj}]
We define
\begin{equation}\label{wisk}
m(x):= {\rm min}\{u(x),v(x)\} \qquad\mbox{and}\qquad M(x):={\rm max}\{u(x),v(x)\}.
\end{equation}
Also, since
\begin{equation}\label{oks}
\Pc({\rm min}\{u,v\},\Omega) +\Pc({\rm max}\{u,v\},\Omega) = \Pc(u, \Omega) + \Pc(v,\Omega),
\end{equation}
we will just focus on the kinetic term~$\Hc$.

Let~$x$, $y\in\R^n$ and consider the two possible scenarios:
\begin{enumerate}[\rm(i)]
        \item either~$u(x) \leq v(x)$ and~$u(y) \leq v(y)$, or~$u(x)\ge v(x)$ and~$u(y) \ge v(y)$,
        \label{ozssz}
        \item either~$u(x) \leq v(x)$ and~$u(y)\ge v(y)$, or~$u(x)\ge v(x)$ and~$u(y) \leq v(y)$.\label{ozsszz}
    \end{enumerate}
If~\eqref{ozssz} holds, then~\eqref{oas} plainly follows with the equal sign. Thus, we might suppose that~\eqref{ozsszz} holds. If this is the case, we compute
\begin{equation}\label{spem}
\begin{split}
&|m(x)-m(y)|^2 + | M(x) - M(y)|^2 \\
&\qquad= |u(x)-v(y)|^2 + |v(x) - u(y)|^2 \\
&\qquad= | u(x)-u(y)|^2 + | v(x) - v(y)|^2 + 2 \left( u(x) - v(x) \right) \left( u(y)-v(y)\right) \\
&\qquad \leq |u(x)-u(y)|^2 + | v(x) - v(y)|^2,
\end{split}
\end{equation}
thus leading to~\eqref{oas}.

Furthermore, the claim in~\eqref{jjss} follows by inspection of~\eqref{oks} and~\eqref{spem}.
\end{proof}

We recall the following statement, that guarantees
the existence of a minimizer for the energy in a given domain.

\begin{lemma}[Lemma 4.7 in~\cite{CP16}]\label{en_minimiz}
Let~$n\geq1$ and~$s \in(0,1)$. Let~$K$ satisfy~\eqref{krn_symm} and~\eqref{main_ellipt}. Assume that\footnote{In~\cite{CP16} the authors refer to a condition on~$W$ reading as: $ W(\pm 1)= W\rq{}(\pm 1)=0$. Nevertheless, the proof only exploits~$W(\pm 1)=0$.} $W(\pm 1) =0$.

Let~$\Omega \subset \R^n$ be a bounded Lipschitz domain. 
Let~$w_0 : \R^n \to [-1,1]$ be a measurable function and
suppose that there exists another measurable function~$w$ which coincides whith~$w_0$ in~$\R^n \setminus \Omega$ and such that
$$ \Ec (w, \Omega)< +\infty.$$

Then, there exists a local minimizer~$v_{\Omega}: \R^n \to [-1,1]$ for~$\Ec$ in~$\Omega$ which coincides with~$w_0$ in~$\R^n \setminus \Omega$.
\end{lemma}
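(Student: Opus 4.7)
\emph{Plan.} The strategy is the direct method of the calculus of variations. I would first fix the admissible class
$$\mathcal{A} := \big\{v : \R^n \to [-1,1] \text{ measurable} : v = w_0 \text{ a.e.\ in } \R^n \setminus \Omega\big\}.$$
Since $w \in \mathcal{A}$ and $\Ec(w, \Omega) < +\infty$, the quantity $\mathcal{I} := \inf_{v \in \mathcal{A}} \Ec(v, \Omega)$ is a nonnegative real number. Let $(v_k)_{k\in\N} \subset \mathcal{A}$ be a minimizing sequence, so $\|v_k\|_{L^\infty(\R^n)} \le 1$ and $\Hc(v_k, \Omega) \le \Ec(v_k, \Omega)$ is uniformly bounded; in particular the Gagliardo-type seminorm $[v_k]_{\Hbb(\Omega)}$ defined in~\eqref{resis} is uniformly bounded.

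Next I would extract a convergent subsequence. Combining the lower bound in~\eqref{main_ellipt} on $B_{r_0}$ with the uniform $L^\infty$ bound yields a uniform control on the fractional seminorm $[v_k]_{H^s(\Omega)}$ (the long-range pieces contribute finitely thanks to $\|v_k\|_\infty \le 1$). By the fractional Rellich compactness on the bounded Lipschitz domain $\Omega$, a subsequence (not relabeled) of $v_k$ converges in $L^2(\Omega)$ and a.e.\ in $\Omega$ to some $v_\Omega$. Setting $v_\Omega := w_0$ on $\R^n \setminus \Omega$, the pointwise convergence $v_k \to v_\Omega$ holds a.e.\ on all of $\R^n$, so $v_\Omega$ takes values in $[-1,1]$ a.e.\ and belongs to $\mathcal{A}$.

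Finally I would pass to the limit by lower semicontinuity. The integrand $|v_k(x)-v_k(y)|^2 K(x-y)$ is nonnegative and converges a.e.\ on $\R^{2n} \setminus (\R^n \setminus \Omega)^2$, so Fatou's lemma gives
$$\Hc(v_\Omega, \Omega) \le \liminf_{k\to\infty} \Hc(v_k, \Omega).$$
Since $W$ is continuous and nonnegative, and $|v_k| \le 1$, either Fatou or dominated convergence (with dominant $\max_{[-1,1]} W$) gives $\Pc(v_\Omega, \Omega) \le \liminf_{k} \Pc(v_k, \Omega)$. Summing,
$$\Ec(v_\Omega, \Omega) \le \liminf_{k\to\infty} \Ec(v_k, \Omega) = \mathcal{I},$$
so $v_\Omega$ attains the infimum in $\mathcal{A}$. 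Any competitor of the form $v_\Omega + \phi$ with $\phi \in C^\infty_0(\Omega)$ belongs to $\mathcal{A}$ after truncation into $[-1,1]$, and such truncation does not raise the energy by Lemma~\ref{leaj} together with $W(\pm 1) = 0$; hence $v_\Omega$ is in particular a local minimizer in $\Omega$ in the sense of Definition~\ref{defini}.

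The main obstacle is to keep the book-keeping of the nonlocal kinetic term correct: the integration domain $\R^{2n} \setminus (\R^n \setminus \Omega)^2$ contains both the interior-interior and the interior-exterior interactions, so a priori bounds on the $L^2$-based Gagliardo seminorm over $\Omega$ alone are not sufficient, and one must rely on the fixed exterior datum $w_0$ and on the $L^\infty$ bound $|v_k| \le 1$ to control the mixed term uniformly and to get pointwise convergence on the exterior piece. The constraint $v_\Omega \in [-1,1]$ is preserved automatically in the limit, but it is the truncation lemma together with $W(\pm 1) = 0$ that legitimizes restricting to $[-1,1]$-valued competitors in the first place.
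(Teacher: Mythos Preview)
Your argument is correct and follows the standard direct method: compactness via a fractional Rellich embedding and lower semicontinuity via Fatou. Note that the paper does not actually prove this lemma---it is quoted verbatim as Lemma~4.7 of~\cite{CP16}---so there is no in-paper proof to compare against; the direct method you outline is precisely the approach of~\cite{CP16}. One small remark on your final step: to justify that truncation into~$[-1,1]$ does not raise the energy, it is cleaner to argue directly that the map~$t\mapsto\max\{-1,\min\{t,1\}\}$ is~$1$-Lipschitz (so the kinetic term decreases pointwise) and that~$W\ge 0=W(\pm1)$ handles the potential term; invoking Lemma~\ref{leaj} works too, but you then implicitly also use~$W\ge 0$ (to get~$\Ec(\max\{u,1\},\Omega)\ge 0$), which is a standing assumption in the paper but is worth making explicit alongside~$W(\pm1)=0$.
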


We include  here another preliminary result which takes care of the growth of the energy~$\Ec$ of local minimizers inside large intervals.

\begin{prop}\label{mdss}
Let~$K$ satisfy~\eqref{krn_symm} and~\eqref{main_ellipt} and let~$W \in L^{\infty}(\R)$ be such that~$W(\pm1)=0$. Let~$\alpha$, $\beta \in [-1,1]$ and~$J:=[a,b] \subset \R$ such that~$|J|>6$.

If~$v:\R \to [-1,1]$ is a local minimizer of~$\Ec$ in~$J$ satisfying 
\begin{equation*}
v=\alpha \mbox{ if } x\leq a \qquad\mbox{and}\qquad v=\beta \mbox{ if } x\geq b,
\end{equation*}
then there exists a positive constant~$C$, depending only on~$s$, $\Lambda$ and~$\Vert W \Vert_{L^{\infty}(\R)}$ such that
\begin{equation*}
\Ec(v, J\rq{})\leq  C \, \Psi_s(| J\rq{} |),
\end{equation*}
for any~$J\rq{} \subset J$ and~$| J\rq{}|>6$.

Here above,~$\Psi_s$ is the function introduced in~\eqref{fnc_psi}.
\end{prop}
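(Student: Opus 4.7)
The strategy is a competitor comparison: by Remark~\ref{remmarco}, $v$ is also a local minimizer of $\Ec$ on every sub-interval $J'\subset J$, so it suffices to produce some $\tilde v$ coinciding with $v$ on $\R\setminus J'$ and having $\Ec(\tilde v,J')\le C\,\Psi_s(|J'|)$; local minimality then yields the claim.

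Writing $J'=[c,d]$ and $\ell:=d-c>6$, I would build $\tilde v$ so that $\tilde v=v$ on $\R\setminus(c+1,d-1)$ (i.e.\ $\tilde v$ agrees with $v$ on a unit buffer neighborhood of $\partial J'$), $\tilde v\equiv 1$ on the bulk $[c+2,d-2]$ (where $W(1)=0$), and let $\tilde v$ interpolate Lipschitz-continuously (with constant at most $2$) on the two short transition strips $[c+1,c+2]$ and $[d-2,d-1]$ that connect the buffer values of $v$ to the bulk value~$1$. The potential term is then controlled at once by $\Pc(\tilde v,J')\le 4\,\|W\|_{L^\infty(\R)}$, since $W(1)=0$ on the bulk and $|W|\le\|W\|_{L^\infty(\R)}$ on the four unit intervals constituting the buffer and transition regions.

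For the kinetic piece I would split $\Hc(\tilde v,J')$ into ``common'' contributions from pairs $(x,y)$ where $\tilde v=v$ at both points (which identify with a portion of $\Hc(v,J')$ and therefore cancel in the minimality inequality $\Ec(v,J')\le\Ec(\tilde v,J')$) and ``new'' contributions, which are bounded directly. The near-diagonal terms in and near the Lipschitz transitions are handled through $|\tilde v(x)-\tilde v(y)|\le 2|x-y|$ together with the local integrability of $|x-y|^{1-2s}K(x-y)$, while the dominant bulk-to-exterior tail is estimated via $|1-v|^2\le 4$ and~\eqref{main_ellipt} as
\[
\iint_{[c+2,d-2]\times(\R\setminus[c+2,d-2])}|1-v(y)|^2K(x-y)\,dx\,dy\;\le\;\tfrac{4\Lambda}{s}\int_1^{\ell/2}t^{-2s}\,dt\;\le\;C\,\Psi_s(\ell).
\]

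Inserting these into $\Ec(v,J')\le\Ec(\tilde v,J')$ and cancelling the common pieces, a careful bookkeeping delivers $\Hc(v,[c+1,d-1])+\Pc(v,J')\le C\,\Psi_s(\ell)+C\|W\|_{L^\infty(\R)}$, and since $\Psi_s(\ell)$ is bounded below by a positive constant for $\ell>6$, this absorbs into $\Ec(v,[c+1,d-1])\le C\,\Psi_s(\ell)$. To upgrade the estimate from the shrunk interval $[c+1,d-1]$ to $J'=[c,d]$ itself, I would repeat the construction on the slightly enlarged interval $[c-1,d+1]$ (taken inside $J$; in the edge cases $c=a$ or $d=b$, the already-constant exterior of $J$ plays the role of the outer buffer), and conclude via the quasi-doubling $\Psi_s(\rho+2)\le C\,\Psi_s(\rho)$ valid for $\rho>6$. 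The main technical obstacle is keeping the ``new'' kinetic contributions finite when $s\ge 1/2$: a naïve competitor jumping discontinuously from $v$ to $1$ at the edge of the bulk would produce a divergent integral because of the singularity of $K$, and the combination of the unit buffer (where $\tilde v=v$) with the Lipschitz transition strips is exactly what tames this divergence.
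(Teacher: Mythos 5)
Your proposal takes a genuinely different route from the paper, and it has a gap. The paper normalizes $J=[-a,a]$ and splits into two cases: when $\operatorname{dist}(J',\R\setminus J)>2$ it simply cites~\cite[Proposition~3.1]{CozziValdNONLINEARITY}, and when $J'$ sits near $\partial J$ it compares $v$ with a \emph{globally Lipschitz} competitor $\psi$ defined on all of $J$ (not $J'$): $\psi$ is piecewise affine, equal to the \emph{constant} exterior data $\alpha$ and $\beta$ of $v$ outside $J$, and equal to $-1$ on the bulk $[-a+2,a-2]$. One then computes $\Hc(\psi,J)\le C\Psi_s(|J'|)$ directly and concludes via $\Ec(v,J')\le\Ec(v,J)\le\Ec(\psi,J)$, using monotonicity of $\Ec$ in the domain (footnote to Remark~\ref{remmarco}) and minimality of $v$ in $J$. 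Because $\psi$ only needs to match $v$ where $v$ is constant, it is Lipschitz on all of $\R$ and no singular interaction arises.

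Your competitor instead equals $v$ on a unit buffer inside $J'$, and this is exactly where the argument breaks. After the cancellation you describe, the inequality you need to exploit is
\[
\Hc\big(v,(c+1,d-1)\big)+\Pc(v,J')\ \le\ \Hc\big(\tilde v,(c+1,d-1)\big)+\Pc(\tilde v,J'),
\]
and the right-hand side contains pairs $x\in(c+1,c+2)$ (where $\tilde v$ is the linear transition) and $y\in(c,c+1)$ (the buffer, where $\tilde v=v$). For such pairs the bound $|\tilde v(x)-\tilde v(y)|\le 2|x-y|$ you invoke for the near-diagonal terms is \emph{false}: $\tilde v(y)=v(y)$, and the proposition only assumes $W\in L^\infty(\R)$, so $v$ is merely a bounded measurable local minimizer with no quantitative modulus of continuity available. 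The best one can write is $|\tilde v(x)-\tilde v(y)|\le 2|x-(c+1)|+|v(c+1)-v(y)|$, and the second summand produces a term of the form $\int_{c+1}^{c+2}\int_c^{c+1}|v(c+1)-v(y)|^2\,K(x-y)\,dy\,dx$, which is not bounded by $\Psi_s$ (and can even be $+\infty$ when $s\ge 1/2$, in which case the minimality inequality becomes vacuous and the cancellation is an illegal $\infty-\infty$). So the buffer does not tame the singularity; it merely relocates the uncontrolled matching to the interface $x=c+1$. The paper sidesteps this entirely by never asking a Lipschitz competitor to match non-constant values of $v$; patching your construction would essentially amount to re-deriving~\cite[Proposition~3.1]{CozziValdNONLINEARITY}, which the paper deliberately cites rather than reproves.
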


\begin{proof}
We will denote by~$C$ any positive constant that depends only on~$s$, $\Lambda$ and~$\Vert W\Vert_{L^{\infty}(\R)}$.

Without loss of generality we suppose that~$J=[-a,a]$ for some~$a>3$. Also, if~${\rm dist}(J\rq{},\R\setminus(-a,a))>2$, then Proposition~\ref{mdss} follows\footnote{
We point out that Proposition~$3.1$ in~\cite{CozziValdNONLINEARITY} is proved under an analogue assumption of~\eqref{main_ellipt} here, with~$r_0=1$. Nevertheless, the proof only exploits the right-hand inequality in~\eqref{main_ellipt} and thus it is valid in our framework too.
} from~\cite[Proposition 3.1]{CozziValdNONLINEARITY} and by observing that~$v$ is also a local minimizer in any sub interval of~$J$ (see Remark~\ref{remmarco}). Consequently, we might set~$J\rq{}:=[-c,d]$, such that~$a-c<2$ and~$a-d<2$.

Now, we define the following function
\begin{equation*}\psi(x):=
\begin{cases}
\alpha &\mbox{if } x \leq -a, \\
-x(\alpha+1)/2 +\alpha-a(1+\alpha)/2 &\mbox{if } x \in [-a, -a+2],\\
-1 & \mbox{if }   |x | \leq a-2, \\
x (\beta+1)/2 +\beta-a(\beta+1)/2 &\mbox{if } x \in [a-2, a],\\
\beta  &\mbox{if } x \geq a
\end{cases}
\end{equation*}
and we claim that
\begin{equation}\label{cheinerp}
\Ec(\psi,  J ) \leq C \,\Psi_s(|J\rq{}|),
\end{equation}
for some~$C>0$ depending only on~$s$, $\Lambda$ and~$\Vert W \Vert_{L^{\infty}(\R)}$.

Indeed, let~$x\in[-a,a]$ and set~$d(x):= {\rm max} \left\{ a-2-|x| , 1 \right\}$.
Then, for any~$y \in \R$,
\begin{equation*}|\psi(x)-\psi(y)| \leq
\begin{cases}
d(x)^{-1} | x-y| &\mbox{if } |x-y| \leq d(x), \\
2 &\mbox{if } |x-y| > d(x) .
\end{cases}
\end{equation*}
As a consequence, recalling~\eqref{main_ellipt},
\begin{equation*}
\begin{split}&
\int_{\R} |\psi(x)-\psi(y)|^2 K(x-y) \, dy \\
&\leq 4 \Lambda \left( d(x)^{-2} \int_{\{| x-y|\leq d(x)\}} |x-y |^{-1+2(1-s)} \, dy+  \int_{\{|x-y|> d(x)\}}|x-y |^{-1-2s} \, dy \right) \\
&=  8 \Lambda \left( d(x)^{-2} \int_{0}^{d(x)} |y |^{-1+2(1-s)} \, dy+ \int_{ d(x)}^{+\infty} |  y|^{-1-2s} \, dy \right) \\
&\leq C_{s,\Lambda} \ d(x)^{-2s}.
\end{split}
\end{equation*}
{F}rom this inequality, we deduce that
\begin{equation*}
\begin{split}
&\Hc(\psi,J ) \leq  C_{s,\Lambda} \int_{-a}^{a} d(x)^{-2s} \, dx \le C_{s,\Lambda} \left( \int_0^{a-3} \frac{dx}{(a-2-x)^{2s}}  + 1 \right) \\
&\qquad\le C_{s,\Lambda} +C_{s,\Lambda} \begin{cases}
\log(a-2) &\mbox{if } s =1/2,\\
\frac{1}{1-2s} \left( ( a -2 )^{1-2s}-1 \right) &\mbox{if } s\neq 1/2
\end{cases}\\
&\qquad \leq C_{s,\Lambda} \,\Psi_s(a-2) \leq C_{s,\Lambda}  \,\Psi_s(|J\rq{} |),
\end{split}
\end{equation*}
up to relabeling~$C_{s,\Lambda}$. Since
exploiting the boundedness of~$W$ we also have that
\begin{equation*}
\Pc(\psi,J ) = \int_{-a}^a W(\psi(x))\, dx =
\int_{-a}^{-a+2}W(\psi(x))\, dx +\int_{a-2}^{a}W(\psi(x))\, dx \leq C_{\Vert W \Vert_{L^{\infty}(\R)}},
\end{equation*}
it follows that~\eqref{cheinerp} holds true.

Being~$v$ a local minimizer of~$\Ec$ in~$J$ and~$\psi$ a suitable competitor, \eqref{cheinerp} yields the thesis.
\end{proof}

\begin{rem}
{\rm
We stress that Proposition~\ref{mdss} is an adaption of~\cite[Proposition~3.1]{CozziValdNONLINEARITY} to our framework. In particular, in~\cite{CozziValdNONLINEARITY}, the authors operate in higher dimensions and impose no constraints on~$v$ outside~$J$ (other than~$v \in [-1,1]$). On the other hand, Proposition~\ref{mdss} allows~$J\rq{}$ to be arbitrarily close to~$J$.
}
\end{rem}

We are now in the position to prove the first main result of this section.

\begin{lemma}\label{min_open_inter}
Let~$K$ satisfy~\eqref{krn_symm} and~\eqref{main_ellipt} and let~$W \in L^{\infty}(\R)$ be such that~$W(\pm1)=0$. Let~$I:=[a,b] \subset \R$ be an interval with length~$|I|= b-a>6$.   
	
Then, there exists a local minimizer~$v_{I}: \R \to [-1,1]$ for~$\Ec$ in~$I$. 

In particular,
\begin{equation}\label{vi_prop}
{v_{I}(x)=-1} \mbox{ if }{x\leq a}\qquad {\mbox{and}}\qquad
{v_{I}(x)=1}  \mbox{ if }{x\geq b}.
\end{equation}
Also, there exists~$C>0$, depending only on~$s$, $\Lambda$ and~$W$, such that
\begin{equation}\label{bound_energy}
    		\Ec(v_{I};J) \leqslant C \, \Psi_s(|J |),
\end{equation}
where~$J$ is either~$I$ or any subinterval of~$I$ with~$|J | >6$.

Here above, $\Psi_s$ is the function introduced in~\eqref{fnc_psi}.
\end{lemma}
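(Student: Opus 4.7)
The plan is to obtain Lemma~\ref{min_open_inter} by combining the general existence result of Lemma~\ref{en_minimiz} with the a~priori bound of Proposition~\ref{mdss}, the main work being the explicit construction of a finite-energy competitor with the prescribed boundary behavior.

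\textbf{Step 1: construction of an admissible boundary datum.} First I would define a measurable function $w_0 : \R \to [-1,1]$ that equals $-1$ on $(-\infty, a]$, equals $1$ on $[b, +\infty)$ and, on $I=[a,b]$, is defined by linear interpolation (for concreteness, $w_0(x)=-1+2(x-a)/(b-a)$). The function $\psi$ built inside the proof of Proposition~\ref{mdss} with the choice $\alpha=-1$, $\beta=1$ coincides with $w_0$ on $\R\setminus I$ (up to changing $w_0$ harmlessly inside $I$, since the value of $w_0$ on $I$ is irrelevant to the hypothesis of Lemma~\ref{en_minimiz}) and satisfies $\Ec(\psi,I)<+\infty$: indeed, the computation in the proof of Proposition~\ref{mdss} shows that $\Hc(\psi,I)\leq C_{s,\Lambda}\Psi_s(|I|)$ via the upper bound in~\eqref{main_ellipt}, while $\Pc(\psi,I)\leq 2\|W\|_{L^\infty(\R)}$ since $\psi\equiv -1$ on most of $I$ and $W(\pm 1)=0$.

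\textbf{Step 2: existence via Lemma~\ref{en_minimiz}.} With $w_0$ and the competitor $\psi$ as above, all the hypotheses of Lemma~\ref{en_minimiz} are met on the bounded Lipschitz domain $I\subset \R$ (recall that $W\in L^\infty(\R)$ and $W(\pm 1)=0$). Applying that lemma produces a local minimizer $v_{I}:\R\to[-1,1]$ of $\Ec$ in $I$ which coincides with $w_0$ on $\R\setminus I$. This gives~\eqref{vi_prop} immediately: $v_{I}\equiv -1$ on $(-\infty,a]$ and $v_{I}\equiv 1$ on $[b,+\infty)$.

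\textbf{Step 3: energy growth via Proposition~\ref{mdss}.} Since $v_{I}$ is a local minimizer of $\Ec$ in $I$ satisfying the one-sided boundary conditions $v_{I}=-1$ on $(-\infty,a]$ and $v_{I}=+1$ on $[b,+\infty)$, and since $|I|>6$ by hypothesis, I would directly invoke Proposition~\ref{mdss} with $\alpha=-1$, $\beta=+1$ on the interval $J:=I$, and also on any subinterval $J\subset I$ with $|J|>6$. This yields the desired estimate $\Ec(v_{I};J)\leq C\,\Psi_s(|J|)$ with $C$ depending only on $s$, $\Lambda$ and $\|W\|_{L^\infty(\R)}$, which is precisely~\eqref{bound_energy}.

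The proof is therefore a routine assembly of earlier results and I do not foresee any real analytic obstacle: the only point that requires a tiny bit of care is to exhibit an explicit finite-energy competitor so that Lemma~\ref{en_minimiz} becomes applicable, but this is already implicitly done in the barrier construction used for Proposition~\ref{mdss}.
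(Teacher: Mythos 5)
Your proposal is correct and follows essentially the same plan as the paper: build a finite-energy competitor attaining the prescribed boundary data, apply Lemma~\ref{en_minimiz} for existence, and invoke Proposition~\ref{mdss} for the energy bound~\eqref{bound_energy}. The only cosmetic difference is the choice of competitor: the paper uses the piecewise-linear function $h$ transitioning from $-1$ to $1$ on $[-1,1]$ and estimates its energy by appealing to~\cite[Lemma~2]{PSV13} together with~\eqref{en_relation}, whereas you reuse the barrier $\psi$ (with $\alpha=-1$, $\beta=1$) already constructed and estimated inside the proof of Proposition~\ref{mdss}, which is an equally valid and slightly more self-contained route.
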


In~\cite[Lemma~6.1]{CP16} a proof of this result is presented. Nevertheless, the authors in~\cite{CP16} work with an even potential~$W$, which allows them to retrieve an odd minimizer~$v_I$. Since we dropped this hypothesis on~$W$, we provide here a self contained proof.

\begin{proof}[Proof of Lemma~\ref{min_open_inter}]
For any interval~$I=[a,b]$, we would like to prove the existence of the function~$v_{I}$ by means of Lemma~\ref{en_minimiz}. In this way, we only need to construct a suitable competitor in~$[a,b]$.

Without loss of generality, we suppose that~$a<-3$ and~$b>3$. We consider the function
\begin{equation*}
h(x)= \begin{cases}
-1 &\mbox{if } x \leq -1,\\
x &\mbox{if } x \in (-1,1),\\
1 &\mbox{if } x\geq 1.
\end{cases}
\end{equation*}
By~\cite[Lemma~2]{PSV13} and exploiting that~$|I | >6$, we have that
\begin{equation*}
\begin{split}
\mathscr{F}(h, I) &\leq
\begin{cases}
C_s (1+ |I |^{1-2s}) &\mbox{if } s \in (0,1/2), \\
C_s (1 + \log |I | ) &\mbox{if } s = 1/2,\\
C_s &\mbox{if } s \in (1/2,1)
\end{cases}\\
&\leq 2\, C_s \,\Psi_s(|I |).
\end{split}
\end{equation*}
where~$C_s>1$ is a constant depending only on~$s$ and~$\mathscr{F}$ has been defined in~\eqref{psv_en}.

Thus, taking advantage of~\eqref{en_relation}, we see that
\begin{equation*}
\Ec(h, I) \leq \left( \Lambda +2 \right) C_s \,\Psi_s(|I |).
\end{equation*}
Then, Lemma~\ref{en_minimiz} yields~\eqref{vi_prop} and~\eqref{bound_energy} with~$J=I$
and~$C:=  \left( \Lambda +2 \right) C_s$.

Proposition~\ref{mdss} completes the proof of~\eqref{bound_energy} for any subinterval~$J$ of~$I$ with~$|J |>6$.
\end{proof}

\begin{prop}\label{eupro}
The minimizer~$v_I$ given by
Lemma~\ref{min_open_inter} is non-decreasing.
\end{prop}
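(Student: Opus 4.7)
The plan is to compare $v_I$ with a right translate of itself and conclude via the energy inequality of Lemma~\ref{leaj}, using in a crucial way its equality case. Fix $\tau\in(0,b-a)$ and set $w(x):=v_I(x+\tau)$. By translation invariance of the energy, $w$ is a local minimizer of $\Ec$ in~$[a-\tau,b-\tau]$, and by Remark~\ref{remmarco} both $v_I$ and $w$ are local minimizers of $\Ec$ in the common subinterval $J:=[a,b-\tau]$. Introduce the functions
\begin{equation*}
u_1(x):=\min\{v_I(x),w(x)\},\qquad u_2(x):=\max\{v_I(x),w(x)\}.
\end{equation*}
Using $v_I\equiv -1$ on~$(-\infty,a]$, $w\equiv 1$ on~$[b-\tau,+\infty)$, and $v_I,w\in[-1,1]$, a direct inspection shows that $u_1\equiv v_I$ and $u_2\equiv w$ on $\R\setminus J$.

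Since $u_1$ is thus an admissible perturbation of $v_I$ on~$J$ and $u_2$ is one of~$w$ on~$J$, the minimality of $v_I$ and of $w$ in~$J$ yields
\begin{equation*}
\Ec(v_I;J)+\Ec(w;J)\leq \Ec(u_1;J)+\Ec(u_2;J),
\end{equation*}
while Lemma~\ref{leaj} gives the reverse inequality. Hence both are equalities, and the characterisation of equality in Lemma~\ref{leaj} forces either $v_I\leq w$ everywhere on~$\R$ or $v_I\geq w$ everywhere on~$\R$.

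The second alternative is excluded by iteration: if $v_I(x)\geq v_I(x+\tau)$ for every $x\in\R$, evaluating at $x=b-\tau$ together with $v_I\leq 1$ gives $v_I(b-\tau)=1$, and iterating yields $v_I(b-k\tau)=1$ for every $k\in\N$, contradicting $v_I\equiv -1$ on $(-\infty,a]$ as soon as $k\tau>b-a$. Therefore $v_I(x)\leq v_I(x+\tau)$ for every $x\in\R$ and every $\tau\in(0,b-a)$, which (splitting large gaps into small ones) is equivalent to $v_I$ being non-decreasing on~$\R$. The most delicate point is precisely the invocation of the equality case of Lemma~\ref{leaj}: it is what upgrades the customary min–max sub-additivity into a pointwise ordering, and it is what makes the translation argument work without any further regularity assumption on~$v_I$.
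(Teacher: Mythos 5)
Your proof is correct and follows the same sliding/translation strategy as the paper, hinging on the equality case of Lemma~\ref{leaj}. The one genuine (if small) difference is technical and slightly cleaner: you work on the intersection interval $J=[a,b-\tau]$, where by Remark~\ref{remmarco} both $v_I$ and its translate $w$ are local minimizers, so that $u_1=\min\{v_I,w\}$ and $u_2=\max\{v_I,w\}$ are directly admissible competitors for $v_I$ and $w$ respectively on the \emph{same} domain. The paper instead works on the larger interval $J=[a-\tau,b]$ and must invoke the auxiliary energy-transfer identity~\eqref{nevnolab} to convert minimality over the two sub-intervals $[a-\tau,b-\tau]$ and $[a,b]$ into inequalities over~$J$; your choice of $J$ renders that step unnecessary. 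Your closing iteration (showing that $v_I\geq v_I(\cdot+\tau)$ forces $v_I\equiv 1$ on a half-line, contradicting the left boundary data) is also more explicit than the paper's one-line appeal to the external conditions~\eqref{vi_prop}, and is correct. The only point worth flagging, common to your argument and the paper's, is that the competitors $u_1,u_2$ differ from $v_I,w$ by a function that is merely bounded with finite $\Hbb(J)$-seminorm rather than $C^\infty_0(J)$; this is handled by the density statement~\eqref{hfuroieytuhgvkus549860}, which you use implicitly.
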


\begin{proof}
We remark that for any functions~$w$ and~$z$ such that~$w=z$ outside a set~$J\rq{} \subseteq J$, it holds that
\begin{equation}\label{nevnolab}
\Ec(w,J\rq{})-\Ec(z,J\rq{})= \Ec(w,J)-\Ec(z,J).
\end{equation}

Now, for any~$\tau>0$ we set
\begin{equation*}
u(x):= v_I(x) \qquad\mbox{and}\qquad v(x) := v_I(x + \tau)
\end{equation*}
and recall the setting in~\eqref{wisk}. In view of Lemma~\ref{leaj}, we set~$J\rq{}:= [a-\tau, b-\tau]$ and~$J:=[a-\tau,b]$ and we see that
\begin{equation}\label{ses}
\Ec(m, J) + \Ec(M, J) \leq \Ec(u, J)  + \Ec(v, J).
\end{equation}

Also, since
\begin{equation*}
M(x)= -1 \mbox{ if } x \leq a-\tau \qquad\mbox{and}\qquad M(x)= 1 \mbox{ if } x \geq b-\tau,
\end{equation*}
we exploit~\eqref{nevnolab} and the minimality of~$v$ in~$J\rq{}$ to obtain that
\begin{equation}\label{elapri}
\Ec(M, J) - \Ec(v, J) \geq 0.
\end{equation}

Analogously, one can infer that
\begin{equation*}
\Ec(m, J) - \Ec(u, J) \geq 0.
\end{equation*}
Putting this and~\eqref{elapri} together, we conclude that
\begin{equation*}
\Ec(m, J) + \Ec(M, J) \geq \Ec(u, J)  +\Ec(v, J).
\end{equation*}
This and~\eqref{ses} imply that
\begin{equation*}
\Ec(m, J) + \Ec(M, J) = \Ec(u, J)  + \Ec(v, J).
\end{equation*}
We now exploit formula~\eqref{jjss} of Lemma~\ref{leaj} to deduce that~$u-v$ does not change sign. Namely~$v_I-v_I(\cdot+\tau)$ does not change sign, which means that~$v_I$ is monotone.

The fact that~$v_I$ is non-decreasing follows
from the external conditions~\eqref{vi_prop}.
\end{proof}

\begin{prop}\label{symm}
Let~$v_{[-M,M]}$ be the minimizer given by
Lemma~\ref{min_open_inter}. Assume, in addition, that~\eqref{pot_symm} holds true.

Then, $v_{[-M,M]}$ is an odd function.
\end{prop}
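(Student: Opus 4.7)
The plan is to exploit the symmetries of $K$ and $W$ to build, from $v_{[-M,M]}$, a second minimizer with the same exterior data, and then use Lemma~\ref{leaj} to force the two to coincide.

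First, I would define the reflected function
\[
\tilde v(x) := -v_{[-M,M]}(-x).
\]
The external conditions~\eqref{vi_prop} transfer immediately: $\tilde v \equiv -1$ on $(-\infty,-M]$ and $\tilde v \equiv 1$ on $[M,+\infty)$. The next step is to verify that $\tilde v$ is itself a local minimizer of $\Ec$ on $[-M,M]$. This is where both hypotheses are used. By the change of variables $x\mapsto -x$, $y\mapsto -y$, combined with~\eqref{krn_symm}, the set $\R^2\setminus ([-M,M]^c)^2$ is invariant and $K(x-y)$ is preserved, so $\Hc(\tilde v,[-M,M]) = \Hc(v_{[-M,M]},[-M,M])$. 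By~\eqref{pot_symm}, $W(-t)=W(t)$, so $\Pc(\tilde v,[-M,M]) = \Pc(v_{[-M,M]},[-M,M])$. Applying the same reflection trick to any competitor $\tilde v + \phi$ (using $\psi(x):=-\phi(-x)\in C_0^\infty((-M,M))$) shows that minimizing $\Ec(\tilde v+\phi,[-M,M])$ over $\phi$ is equivalent to minimizing $\Ec(v_{[-M,M]}+\psi,[-M,M])$, so $\tilde v$ is indeed a minimizer.

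Next, I would apply Lemma~\ref{leaj} with $u:=v_{[-M,M]}$ and $v:=\tilde v$ on $\Omega=[-M,M]$. Both $\min\{u,v\}$ and $\max\{u,v\}$ coincide with $\mp 1$ outside $[-M,M]$, hence are admissible competitors for both $u$ and $v$; minimality gives
\[
\Ec(u,[-M,M]) + \Ec(v,[-M,M]) \le \Ec(\min\{u,v\},[-M,M]) + \Ec(\max\{u,v\},[-M,M]).
\]
The reverse inequality~\eqref{oas} of Lemma~\ref{leaj} then forces equality, and the equality clause~\eqref{jjss} yields either $u\le v$ everywhere on $\R$ or $u\ge v$ everywhere on $\R$. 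Substituting $x\mapsto -x$ in $u(x)\ge v(x)=-u(-x)$ produces $u(-x)\ge -u(x)$, i.e.\ $u(x)\le -u(-x)=v(x)$; combining the two pointwise inequalities gives $u(x)=-u(-x)$, which is exactly the oddness of $v_{[-M,M]}$.

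The only delicate step is the verification that $\tilde v$ is a minimizer, since one must keep careful track of the domain of integration $\R^2\setminus ([-M,M]^c)^2$ under the reflection and of the interaction between~\eqref{krn_symm} and~\eqref{pot_symm}; after that, the argument is a clean application of Lemma~\ref{leaj} together with its sharp equality case~\eqref{jjss}. Note also that the conclusion is consistent with the monotonicity provided by Proposition~\ref{eupro}, but we do not need monotonicity for the present argument.
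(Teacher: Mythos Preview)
Your proof is correct and follows essentially the same approach as the paper: define the reflected function $\tilde v(x)=-v_{[-M,M]}(-x)$, use the symmetries~\eqref{krn_symm} and~\eqref{pot_symm} to show it is again a minimizer with the same exterior data, then combine minimality with Lemma~\ref{leaj} and its equality clause~\eqref{jjss} to obtain a global ordering, from which oddness follows. Your write-up is in fact slightly more detailed than the paper's in two places (the competitor correspondence $\phi\mapsto\psi$ and the final step deducing $u=-u(-\cdot)$ from the ordering), but the argument is the same.
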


\begin{proof}
We set~$w(x):=-v_{[-M,M]}(-x)$ and we claim that
\begin{equation}\label{ojhvrcc}
\Ec(w,[-M,M]) = \Ec(v_{[-M,M]},[-M,M]).
\end{equation}
Indeed, one can easily check that~$\Hc(w,[-M,M])=\Hc(v_{[-M,M]},[-M,M])$.
Moreover, by~\eqref{pot_symm},
\begin{equation*}
\Pc(w,[-M,M])= \int_{-M}^{M} W(-v(-x))\, dx =\int_{-M}^{M} W(v(-x))\, dx =\Pc(v_{[-M,M]},[-M,M]).
\end{equation*}
Thus, \eqref{ojhvrcc} holds and, in particular, both~$w$ and~$v_{[-M,M]}$ are local minimizers of~$\Ec$ in~$[-M,M]$.

As a consequence,
\begin{eqnarray*}&&
\Ec(\max\{w,v_{[-M,M]}\},[-M,M])+\Ec(\min\{w,v_{[-M,M]}\},[-M,M])\\&&\qquad \geq \Ec(w,[-M,M]) +\Ec(v_{[-M,M]},[-M,M]).
\end{eqnarray*}
On the other hand, by Lemma~\ref{leaj},
\begin{eqnarray*}&&
\Ec(\max\{w,v_{[-M,M]}\},[-M,M])+\Ec(\min\{w,v_{[-M,M]}\},[-M,M]) \\&&\qquad= \Ec(w,[-M,M]) +\Ec(v_{[-M,M]},[-M,M]).
\end{eqnarray*}
Therefore, equality holds and so, by~\eqref{jjss},
either~$w \geq v_{[-M,M]}$ or~$v_{[-M,M]}\ge w$. Now, the definition of~$w$ yields the thesis.
\end{proof}

\begin{corol}\label{alm_min}
Let~$v_{[0,R]}$ be the minimizer given by
Lemma~\ref{min_open_inter}.
Assume, in addition, that~$K$ satisfies~\eqref{nuovissima}.

Then, for any~$\ell>0$, there exists a function~$\alpha_{\ell}: (0,+\infty) \to (0,+\infty)$ satisfying
\begin{equation}\label{gfhdjskyrueityrueigfhdjs0}
\lim_{R \to +\infty} \alpha_{\ell}(R)=0\end{equation} such that,
for any~$\phi \in C_c^{\infty}(-\ell,\ell)$,
\begin{equation}\label{gfhdjskyrueityrueigfhdjs}
\Ec(v_{[0,R]}, [-\ell, \ell]) \leq \Ec(v_{[0,R]}+\phi, [-\ell,\ell])+ \alpha_{\ell}(R).
\end{equation}
\end{corol}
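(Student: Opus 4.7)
The strategy is to reduce the almost-minimality on $[-\ell, \ell]$ to the exact minimality on $[0, R]$ given by Lemma~\ref{min_open_inter}, combined with a scaling argument that invokes~\eqref{nuovissima}. The starting point is a \emph{locality principle} for energy differences: if $u_1 - u_0$ is supported in a set $S \subset J_1 \cap J_2$, then the kinetic integrand $|u_1(x) - u_1(y)|^2 - |u_0(x) - u_0(y)|^2$ vanishes off $(S \times \R) \cup (\R \times S) \subset S_{J_1} \cap S_{J_2}$, so $\Ec(u_1, J_1) - \Ec(u_0, J_1) = \Ec(u_1, J_2) - \Ec(u_0, J_2)$; the same holds trivially for the potential term. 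In particular, whenever $\mathrm{supp}\,\phi \subset (0, \ell)$, this observation combined with Remark~\ref{remmarco} applied to $v_{[0,R]}$ already yields the claim with $\alpha_\ell = 0$.

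For general $\phi \in C_c^{\infty}((-\ell, \ell))$, I would pick a scaling $\sigma_R > 1$ with $\sigma_R \to 1$ as $R \to \infty$ and a shift $\tau_R \in [\ell/\sigma_R, R - \ell/\sigma_R]$, and set $\phi_R(y) := \phi(\sigma_R(y - \tau_R))$. By construction $\phi_R \in C_c^{\infty}((0, R))$, so the minimality of $v_{[0,R]}$ on $[0,R]$ gives $\Ec(v_{[0,R]}, [0,R]) \leq \Ec(v_{[0,R]} + \phi_R, [0,R])$. Localizing via the principle above to $J_R := (\tau_R - \ell/\sigma_R, \tau_R + \ell/\sigma_R)$ and changing variables $z := \sigma_R(y - \tau_R)$ turns this into an almost-minimality for the rescaled function $\tilde v_R(z) := v_{[0,R]}(\sigma_R^{-1} z + \tau_R)$ on $(-\ell, \ell)$, but with $K$ replaced by the rescaled kernel $K(\cdot/\sigma_R)$ and the potential weighted by $\sigma_R$. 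Hypothesis~\eqref{nuovissima}, together with the uniform energy bound~\eqref{bound_energy} on bounded sub-intervals of $[0,R]$, then lets me swap $K(\cdot/\sigma_R)$ for $K$ and $\sigma_R$ for $1$ at the cost of an error of order $o(1)$ as $R \to \infty$.

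The hard part will be the final transfer. Since the support constraint forces $\tau_R \geq \ell/\sigma_R$, which cannot be sent to $0$, the rescaled $\tilde v_R$ and the original $v_{[0,R]}$ do not coincide on $[-\ell, \ell]$: rather, $\tilde v_R$ samples $v_{[0,R]}$ over a shifted interval of roughly the same length. Bridging this gap is the technical heart of the proof, and it is where the slow-oscillation assumption~\eqref{nuovissima} is genuinely used; concretely, I would combine the monotonicity of $v_{[0,R]}$ (Proposition~\ref{eupro}), the $C^{1+2s+\theta}$-regularity from Proposition~\ref{reg_entire_sol}, and the asymptotic estimates on $L_K$ collected in Section~\ref{90fxd9916} to show that the energy perturbation induced by the shift is also $o(1)$ as $R \to \infty$. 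Summing the three vanishing contributions defines the function $\alpha_{\ell}(R) \to 0$ demanded by the statement.
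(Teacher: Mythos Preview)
Your locality principle and the idea of comparing through a rescaling are both sound, but the ``hard part'' you flag is a genuine gap, not a technical detail. After your change of variables you obtain almost-minimality for $\tilde v_R(z)=v_{[0,R]}(\sigma_R^{-1}z+\tau_R)$ on $(-\ell,\ell)$, and since $\tau_R\ge\ell/\sigma_R$ is bounded away from~$0$, the function $\tilde v_R$ on $[-\ell,\ell]$ samples $v_{[0,R]}$ on an interval near $[0,2\ell]$, whereas $v_{[0,R]}$ itself equals $-1$ on all of $[-\ell,0]$. The two functions therefore differ by~$O(1)$ on a set of positive measure, and the resulting energy discrepancy is \emph{not} $o(1)$ as $R\to\infty$. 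Monotonicity does not help here, Proposition~\ref{reg_entire_sol} applies only to entire solutions (which $v_{[0,R]}$ is not), and the asymptotics of Section~\ref{90fxd9916} concern $L_K$ acting on explicit power profiles, not energy differences under translation. There is no mechanism in your outline that closes the gap.

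The paper takes a different route that avoids this obstruction entirely: instead of shifting~$\phi$ into $(0,R)$, it introduces the \emph{auxiliary minimizer} $v_{[-\ell,R+\ell]}$ on the enlarged interval. One then rescales $v_{[-\ell,R+\ell]}$ (not $\phi$) to produce a competitor for $v_{[0,R]}$ on $[0,R]$; hypothesis~\eqref{nuovissima} together with~\eqref{bound_energy} controls the scaling error by a quantity $\beta_\ell(R)\to0$. Separately, the energy of $v_{[0,R]}$ on $[-\ell,R+\ell]$ exceeds its energy on $[0,R]$ by at most an explicit tail term $\gamma_\ell(R)\to0$, because $v_{[0,R]}\equiv\pm1$ on the added strips. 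Since $v_{[0,R]}$ and $v_{[-\ell,R+\ell]}$ share the same boundary data outside $[-\ell,R+\ell]$, the function $v_{[0,R]}+\phi$ is an admissible competitor for $v_{[-\ell,R+\ell]}$, and chaining the three inequalities yields the claim with $\alpha_\ell=\beta_\ell+\gamma_\ell$. The key point you are missing is that the test function $\phi$ is never moved; the comparison goes through a second minimizer on a domain that already contains the support of~$\phi$.
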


\begin{proof}
Let~$\ell>0$. In this proof we will make use of the following notations:
for all~$R>0$,
\begin{equation*}
\bar{K}_R:= \sup_{x \in \R^n \setminus\{0\}} \frac{K(\frac{xR}{R+2\ell})}{K(x)}
\qquad{\mbox{and}}\qquad
C_R := \bar{K}_R\left(\frac{R}{R+2\ell} \right)^2 .
\end{equation*}
We recall that~$\Psi_s$ is the function introduced in~\eqref{fnc_psi} and we define the functions
\[\widetilde{\beta}_{\ell}(R):= 2 \left( \max\left\{C_R, \frac{R}{R+2\ell}\right\}-1 \right)\Psi_s(R)\qquad{\mbox{and}}\qquad \beta_{\ell}(R): = \max \big\{ \widetilde{\beta}_{\ell}(R),0 \big\}.\]
Also, let~$s\in (0,1)$ be as in~\eqref{main_ellipt} and set
\begin{equation*}
\gamma_{\ell}(R) :=
\begin{cases}
\displaystyle 2\Lambda \log\left( 1 +\frac{{2}\ell}{R} \right)
&\mbox{if } s = 1/2 ,\\
\displaystyle 
\frac{\Lambda}{s(1-2s)} \big( (R+2\ell)^{1-2s}-R^{1-2s} \big)&\mbox{if } s \neq 1/2 .
\end{cases}
\end{equation*}
We notice that
 \begin{equation}\label{mnbvcx12wsxdr5tgbhuuj}
\lim_{R\to +\infty}\gamma_{\ell}(R)=0.
\end{equation}

Now, we claim that, for any~$\epsilon \in (0,1)$,
\begin{equation}\label{bdbdd}
\limsup_{R\to +\infty} (C_R -1) R^{1-\epsilon} \leq 0.
\end{equation}
Indeed, we observe that~$\bar{K}_R \in [0,+\infty]$. Therefore, if~$R$ is sufficiently large, possibly in dependence of~$\ell$,
\begin{eqnarray}\label{gyureihrje6574854fhsdjfgwygry3i}
(C_R -1) R^{1-\epsilon} & = & \left(\bar{K}_R\left(1-\frac{2\ell}{R+2\ell} \right)^2 -1\right) R^{1-\epsilon} \notag\\
& = & \left(  \bar{K}_R \left( 1-\frac{4\ell}{R+2\ell}+\frac{4\ell^2}{(R+2\ell)^{2}}\right) -1 \right) R^{1-\epsilon} \notag\\
& = & \left( \bar{K}_R -1 \right) R^{1-\epsilon} - \frac{4\ell \bar{K}_R R^{1-\epsilon}}{R+2\ell}
\left( 1 -\frac{\ell}{R+2\ell}\right)\notag\\
& \leq &  \left( \bar{K}_R -1 \right) R^{1-\epsilon}\notag\\
& \le & \left( \bar{K}_R -1 \right) (R+2\ell)^{1-\epsilon} .
\end{eqnarray}
We point out that
\begin{eqnarray*}
\left( \bar{K}_R -1 \right)(R+2\ell)^{1-\epsilon}=(2\ell)^{1-\epsilon}
\left( \sup_{x \in \R^n \setminus\{0\}} \frac{K(\frac{xR}{R+2\ell})}{K(x)}-1\right)
\left(\frac{R+2\ell}{2\ell}\right)^{1-\epsilon},
\end{eqnarray*}
and thus, using~\eqref{nuovissima} with~$\sigma_j$ replaced by~$R/(R+2\ell)$, we have that
\begin{equation*}
\begin{split}
\limsup_{R\to +\infty} \left( \bar{K}_R -1 \right)(R+2\ell)^{1-\epsilon}
&=(2\ell)^{1-\epsilon}\limsup_{R\to +\infty}
\left( \sup_{x \in  \R^n \setminus\{0\}} \frac{K(\frac{xR}{R+2\ell})}{K(x)}-1\right)
\left(\frac{R+2\ell}{2\ell}\right)^{1-\epsilon}\le 0
.
\end{split}
\end{equation*}
This and~\eqref{gyureihrje6574854fhsdjfgwygry3i} give~\eqref{bdbdd}.

{F}rom~\eqref{bdbdd}, we also find that
\begin{eqnarray*}&&
\limsup_{R\to +\infty}\widetilde{\beta}_{\ell}(R)
=\limsup_{R\to +\infty} 2\max\left\{C_R-1, -\frac{2\ell}{R+2\ell}\right\}\Psi_s(R)\\&&\qquad= \limsup_{R\to +\infty}2
\max\left\{(C_R-1)R^{1-\epsilon}, -\frac{2\ell R^{1-\epsilon}}{R+2\ell}\right\} \frac{\varPsi_s(R)}{R^{1-\epsilon}}\le 0,
\end{eqnarray*} and, as a result,
\begin{equation}\label{ovvr}
\lim_{R \to +\infty}\beta_{\ell}(R) = 0.
\end{equation}

Now, we let~$v_{[-\ell, R+\ell]}$ be  a local minimizer
for~$\Ec$ in~$[-\ell, R+\ell]$, as given by
Lemma~\ref{min_open_inter}, and we
define the function
$$z_{R}(x):= v_{[-\ell, R+\ell]}\left( \frac{(R+2\ell)x}{R}-\ell \right).$$
We observe that~$z_{R} (x)=-1$ if~$x \leq 0$ and~$z_{R}(x)=1$ if~$x \geq R$, thanks to the properties of~$v_{[-\ell, R+\ell]}$
in~\eqref{vi_prop}.
Also, by minimality,
\begin{equation*}
\Ec(v_{[0,R]}, [0,R]) \leq \Ec(z_{R}, [0,R]).
\end{equation*}

Moreover, we claim that
\begin{equation}\label{now_comp}
\Ec(v_{[0,R]}, [0,R]) \leq \Ec(v_{[-\ell,R+\ell]}, [-\ell,R+\ell])+ \beta_{\ell}(R).
\end{equation}
In the aim of proving~\eqref{now_comp}, we notice that
\begin{eqnarray*}
&&\Hc(z_R, [0,R]) \le  C_R \Hc(v_{[-\ell, R+\ell]}, [-\ell, R+\ell])
\\{\mbox{and}}\quad &&
\Pc(z_R, [0,R]) =\frac{R}{R+2\ell}\Pc(v_{[-\ell, R+\ell]}, [-\ell, R+\ell]).
\end{eqnarray*}
As a consequence of this and formula~\eqref{bound_energy}
in Lemma~\ref{min_open_inter}, we obtain that
\begin{equation*}
\begin{split}
&\Ec(z_R, [0,R]) 
\le  \max\left\{C_R, \frac{R}{R+2\ell}\right\}\Ec(v_{[-\ell, R+\ell]}, [-\ell, R+\ell])
\\
&= \Ec(v_{[-\ell, R+\ell]}, [-\ell, R+\ell]) + \left( \max\left\{C_R, \frac{R}{R+2\ell}\right\}-1 \right)\Ec(v_{[-\ell, R+\ell]}, [-\ell, R+\ell]) \\
&\leq \Ec(v_{[-\ell, R+\ell]}, [-\ell, R+\ell]) + 2 \left( \max\left\{C_R, \frac{R}{R+2\ell}\right\}-1 \right)\Psi_s(R) \\
&=\Ec(v_{[-\ell, R+\ell]}, [-\ell, R+\ell]) + \beta_{\ell}(R),
\end{split}
\end{equation*}
which is~\eqref{now_comp}.

Furthermore, we claim that
\begin{equation}\label{sub_gam}
0 \leq \Ec (v_{[0,R]}, [-\ell, R+\ell])- \Ec (v_{[0,R]}, [0,R])\leq  \gamma_{\ell}(R).
\end{equation}
The left-hand inequality in~\eqref{sub_gam}
follows by the minimality of~$v_{[0,R]}$ and
Remark~\ref{remmarco}.

We now focus on the proof of the right-hand inequality in~\eqref{sub_gam}.
Exploiting the properties of~$v_{[0,R]}$ and
the assumptions on the kernel and the potential in~\eqref{krn_symm}, \eqref{main_ellipt} and~\eqref{pot_deg}, we find that
\begin{equation*}
\Pc(v_{[0,R]},[-\ell,R+\ell])= \Pc(v_{[0,R]},[0,R])
\end{equation*}
and that
\begin{equation*}
\begin{split}
&\Hc (v_{[0,R]}, [-\ell, R+\ell])- \Hc (v_{[0,R]}, [0,R]) \\
&\le \frac{1}{2} \bigg( \int_{-\ell}^0 \int_{R}^{+\infty}| v_{[0,R]}(x)-v_{[0,R]}(y)|^2 K(x-y) \, dy \, dx + \int_R^{R+\ell} \int_{-\infty}^{-\ell} | v_{[0,R]}(x)-v_{[0,R]}(y)|^2 K(x-y) \, dy \, dx \bigg) \\
&\leq  \frac{\Lambda}{2}\bigg( \int_{-\ell}^0 \int_{R}^{+\infty}\frac{| v_{[0,R]}(x)-v_{[0,R]}(y)|^2}{|x-y|^{1+2s}} \, dy \, dx + \int_R^{R+\ell} \int_{-\infty}^{-\ell} \frac{| v_{[0,R]}(x)-v_{[0,R]}(y)|^2}{|x-y|^{1+2s}} \, dy \, dx \bigg) .
\end{split}
\end{equation*}
In particular, when~$s \in(0,1) \setminus \{1/2\}$,
\begin{equation*}
\begin{split}
&\int_{-\ell}^0 \int_R^{+\infty}\frac{| v_{[0,R]}(x)-v_{[0,R]}(y)|^2}{|x-y|^{1+2s}} \, dy \, dx \leq 4 \int_{-\ell}^0 \int_R^{+\infty}(y-x)^{-(1+2s)} \, dy \, dx \\
&\qquad=\frac{2}{s} \int_{-\ell}^0 (R-x)^{-2s} \, dx= \frac{2}{s(1-2s)} \big( (R+\ell)^{1-2s}-R^{1-2s}\big)\,,
\end{split}
\end{equation*}
and
\begin{equation*}
\begin{split}
&\int_{R}^{R+\ell}\int_{-\infty}^{-\ell}\frac{| v_{[0,R]}(x)-v_{[0,R]}(y)|^2}{|x-y|^{1+2s}} \, dy \, dx \leq 4\int_{R}^{R+\ell}\int_{-\infty}^{-\ell}(x-y)^{-(1+2s)} \, dy \, dx \\
&\qquad=\frac{2}{s} \int_R^{R+\ell}(x+\ell)^{-2s}\, dx = \frac{2}{s(1-2s)} \big( (R+2\ell)^{1-2s}-(R+\ell)^{1-2s}\big)\,,
\end{split}
\end{equation*}
so that
\begin{eqnarray*}
&& \int_{-\ell}^0 \int_R^{+\infty}\frac{| v_{[0,R]}(x)-v_{[0,R]}(y)|^2}{|x-y|^{1+2s}} \, dy \, dx + \int_{R}^{R+\ell}\int_{-\infty}^{-\ell}\frac{| v_{[0,R]}(x)-v_{[0,R]}(y)|^2}{|x-y|^{1+2s}} \, dy \, dx \\
&&\quad  \leq \frac{2}{s(1-2s)} \big( (R+2\ell)^{1-2s}-(R+\ell)^{1-2s}\big) +\frac{2}{s(1-2s)} \big( (R+\ell)^{1-2s}-R^{1-2s}\big)\\
&& \quad = \frac{2\gamma_{\ell}(R)}{\Lambda}
\end{eqnarray*}
In the same spirit, when~$s=1/2$, we see that
\begin{equation*}
\int_{-\ell}^0 \int_R^{+\infty}\frac{|v_{[0,R]}(x)-v_{[0,R]}(y)|^2}{|x-y|^{2}} \, dy \, dx \leq 4 \int_{-\ell}^0 (R-x)^{-1} \, dx = 4 \log(R+\ell) -4\log(R)
\end{equation*}
and
\begin{equation*}
\int_{R}^{R+\ell} \int_{-\infty}^{-\ell}\frac{|v_{[0,R]}(x)-v_{[0,R]}(y)|^2}{|x-y|^{2}} \, dy \, dx \leq 4 \int_{R}^{R+\ell} (y+\ell)^{-1} \, dy = 4\log(R+2\ell)-4\log(R+\ell)\,,
\end{equation*}
so that
\begin{eqnarray*}
	&& \int_{-\ell}^0 \int_R^{+\infty}\frac{| v_{[0,R]}(x)-v_{[0,R]}(y)|^2}{|x-y|^{1+2s}} \, dy \, dx + \int_{R}^{R+\ell}\int_{-\infty}^{-\ell}\frac{| v_{[0,R]}(x)-v_{[0,R]}(y)|^2}{|x-y|^{1+2s}} \, dy \, dx \\*[1ex]
	&&\quad  \leq 4\log(R+2\ell)-4\log(R+\ell) + 4 \log(R+\ell) -4\log(R) \\
	&& \quad = \frac{2\gamma_{\ell}(R)}{\Lambda}
\end{eqnarray*}

These considerations establish~\eqref{sub_gam}.

As a consequence of~\eqref{now_comp}, \eqref{sub_gam} and the minimality of~$v_{[-\ell,R+\ell]}$, we have that
\begin{equation}\label{mli}\begin{split}
&\Ec (v_{[0,R]}, [-\ell,R+\ell]) - \beta_{\ell}(R)-\gamma_{\ell}(R) 
\le  \Ec (v_{[0,R]}, [0,R])- \beta_{\ell}(R)
\\&\qquad\le \Ec(v_{[-\ell,R+\ell]}, [-\ell,R+\ell])
\leq \Ec(v_{[0,R]}+\phi,[-\ell,R+\ell] ),
\end{split}
\end{equation}
for any~~$\phi \in C^{\infty}_c(-\ell,\ell)$. 

Also, exploiting the fact that~$\phi$ is supported in~$[-\ell,\ell]$, we see that
\begin{align*}
&\Ec(v_{[0,R]}+\phi, [-\ell,R+\ell])- \Ec(v_{[0,R]}+\phi, [-\ell,\ell]) \\
&= \frac{1}{4} \int_{\ell}^{R+\ell}\int_{\ell}^{R+\ell}|v_{[0,R]}(x)-v_{[0,R]}(y)|^2 K(x-y)\, dx\,dy \\
&\,+ \frac{1}{2} \int_{\ell}^{R+\ell}\int_{(
(-\infty,-\ell)\cup(R+\ell,+\infty))} |v_{[0,R]}(x)-v_{[0,R]}(y)|^2 K(x-y)\, dx\,dy + \int_{\ell}^{R}W\big(v_{[0,R]}(x)\big)\, dx \\
&\le \Ec(v_{[0,R]}, [-\ell,R+\ell])- \Ec(v_{[0,R]}, [-\ell,\ell]).
\end{align*}
By plugging this inequality into~\eqref{mli}, we obtain the desired result
in~\eqref{gfhdjskyrueityrueigfhdjs}
with~$\alpha_{\ell}(R):= \beta_{\ell}(R)+\gamma_{\ell}(R)$.

{F}rom~\eqref{mnbvcx12wsxdr5tgbhuuj} and~\eqref{ovvr}, we also obtain the limit property in~\eqref{gfhdjskyrueityrueigfhdjs0}.
\end{proof}

\begin{prop}\label{comepo}
Let~$v_{[0,R]}$ be the minimizer given by
Lemma~\ref{min_open_inter}.
Assume, in addition, that~$K$ satisfies~\eqref{nuovissima}.

Then, the function~$v_{[0, R]}$ converges to~$-1$ locally unformly as~$R\to +\infty$.
\end{prop}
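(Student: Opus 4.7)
The plan is to establish pointwise monotonicity of $v_{[0,R]}$ in $R$, identify the pointwise limit $v^*$ as a class~A minimizer of $\Ec$, and then invoke the strong comparison principle to conclude $v^*\equiv -1$.

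First I would show that $v_{[0,R_1]}(x)\ge v_{[0,R_2]}(x)$ for every $x\in\R$ whenever $0<R_1<R_2$. Set $m:=\min\{v_{[0,R_1]},v_{[0,R_2]}\}$ and $M:=\max\{v_{[0,R_1]},v_{[0,R_2]}\}$: from the exterior data (both minimizers equal $-1$ on $(-\infty,0]$ and $v_{[0,R_i]}\equiv 1$ on $[R_i,+\infty)$), $m$ agrees with $v_{[0,R_2]}$ outside $[0,R_2]$ while $M$ agrees with $v_{[0,R_1]}$ outside $[0,R_1]$. The two minimality inequalities, transferred to the common domain $[0,R_2]$ by cancellation of outside contributions, combined with Lemma~\ref{leaj}, force equality throughout the min/max energy inequality; the rigidity case~\eqref{jjss} then gives pointwise comparability of $v_{[0,R_1]}$ and $v_{[0,R_2]}$ on $\R$. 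Since $v_{[0,R_1]}\equiv 1\ge v_{[0,R_2]}$ on $[R_1,R_2]$, the correct ordering is $v_{[0,R_1]}\ge v_{[0,R_2]}$ on $\R$. Consequently, the pointwise limit $v^*(x):=\lim_{R\to+\infty}v_{[0,R]}(x)\in[-1,1]$ exists for every $x$, is non-decreasing in $x$, and is identically $-1$ on $(-\infty,0]$.

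Next I would identify $v^*$ as a class~A minimizer of $\Ec$ by passing to the limit $R\to+\infty$ in Corollary~\ref{alm_min}. For fixed $\ell>0$ and $\phi\in C^\infty_0(-\ell,\ell)$, rewriting the corollary in excess form,
$$\Ec(v_{[0,R]}+\phi,[-\ell,\ell])-\Ec(v_{[0,R]},[-\ell,\ell])\ge -\alpha_\ell(R),$$
and expanding the square in the kinetic part, the left-hand side splits as
\[\langle v_{[0,R]},\phi\rangle_{\Hbb([-\ell,\ell])}+\Hc(\phi,[-\ell,\ell])+\int_{[-\ell,\ell]}\!\bigl[W(v_{[0,R]}+\phi)-W(v_{[0,R]})\bigr]\,dx.\]
All three pieces admit the passage to the limit by dominated convergence: for the bilinear cross term, using $|v_{[0,R]}(x)-v_{[0,R]}(y)|\le 2$, the dominating function $2|\phi(x)-\phi(y)|K(x-y)$ is integrable over $\R^2\setminus(\R\setminus[-\ell,\ell])^2$ thanks to the smoothness and compact support of $\phi$ together with the upper bound in~\eqref{main_ellipt}; for the potential difference, $\|v_{[0,R]}\|_\infty\le 1$ and continuity of $W$ suffice. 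Combined with $\alpha_\ell(R)\to 0$, this yields
$$\Ec(v^*,[-\ell,\ell])\le \Ec(v^*+\phi,[-\ell,\ell])\qquad\text{for every $\phi\in C^\infty_0(-\ell,\ell)$ and every $\ell>0$,}$$
identifying $v^*$ as a class~A minimizer of $\Ec$.

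By Proposition~\ref{reg_entire_sol}, $v^*\in C^{1+2s+\theta}(\R)$ and $L_K v^*=W'(v^*)$ pointwise on $\R$. I would then apply the strong comparison principle Proposition~\ref{comp} on $\Omega=\R$ with $v=v^*$, $w\equiv -1$, and $f_1=f_2=W'$: both functions satisfy $L_K u=W'(u)$, the global ordering $v^*\ge -1$ holds on $\R$, and $v^*(0)=-1=w(0)$ furnishes the required touching point; the proposition then forces $v^*\equiv -1$ on $\R$. Finally, the monotone pointwise convergence $v_{[0,R]}\downarrow -1$ of continuous functions (each $v_{[0,R]}$ continuous by Proposition~\ref{reg_dirichlet_pbm}) to the continuous limit $-1$ upgrades to locally uniform convergence on $\R$ via Dini's theorem. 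The main conceptual point I foresee is that the $v_{[0,R]}$ fail to be weak solutions of the Euler-Lagrange equation on $(-\ell,\ell)$, since variations supported at $x<0$ are forbidden by the external Dirichlet constraint, so Lemma~\ref{eq_stab} does not apply directly; Corollary~\ref{alm_min} furnishes the correct substitute, and the asymptotic vanishing of $\alpha_\ell(R)$ is precisely what upgrades the limit $v^*$ to a genuine local minimizer.
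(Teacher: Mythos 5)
Your proof takes a genuinely different route from the paper's, and the overall architecture is sound; however there is one real gap in the first step.

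\textbf{Comparison with the paper.} The paper extracts a locally uniform subsequential limit of $v_{[0,R]}$ via the $R$-independent H\"older bound from Proposition~\ref{reg_dirichlet_pbm} (this requires the footnoted uniformity remark about \cite[Proposition~1.1]{RS14}) together with the Ascoli--Arzel\`a Theorem, then shows the limit is a class~A minimizer using the same energy-continuity argument and Corollary~\ref{alm_min}, and finally forces the limit to be $\equiv -1$ by evaluating $L_K v$ at $0$ and iterating the positivity of $K$ on $B_{r_0}$-neighbourhoods. You replace the compactness step by a monotonicity-in-$R$ argument (parallel to the sliding argument of Proposition~\ref{eupro}, using Lemma~\ref{leaj} and the rigidity~\eqref{jjss}), which gives the full-sequence monotone pointwise limit without the uniform regularity theory, and you replace the iteration at the end by a single application of the strong comparison principle (Proposition~\ref{comp}). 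The final upgrade from pointwise to locally uniform convergence by Dini's theorem is clean and avoids the ``subsequence of a subsequence'' argument. Your concluding remark about why Corollary~\ref{alm_min} is the right tool (rather than Lemma~\ref{eq_stab}, since $v_{[0,R]}$ is only a solution in $(0,R)$) correctly identifies the same subtle point that the paper's proof is navigating.

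\textbf{The gap.} In establishing the monotonicity in $R$, after obtaining equality in Lemma~\ref{leaj} and invoking~\eqref{jjss}, you assert that ``since $v_{[0,R_1]}\equiv 1\ge v_{[0,R_2]}$ on $[R_1,R_2]$, the correct ordering is $v_{[0,R_1]}\ge v_{[0,R_2]}$ on $\R$.'' This is too quick. The rigidity~\eqref{jjss} guarantees one of the two orderings holds globally, but the observation $v_{[0,R_1]}=1\ge v_{[0,R_2]}$ on $[R_1,R_2]$ is consistent with the alternative ordering $v_{[0,R_1]}\le v_{[0,R_2]}$ \emph{provided} $v_{[0,R_2]}\equiv 1$ on $[R_1,R_2]$; in that degenerate case the two functions agree outside $(0,R_1)$ and nothing so far prevents $v_{[0,R_2]}>v_{[0,R_1]}$ inside $(0,R_1)$. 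You need to rule out $v_{[0,R_2]}\equiv 1$ on any open subinterval of $(0,R_2)$. This can be done by the same positivity/iteration argument the paper uses at the very end: if $v_{[0,R_2]}(x_0)=1$ at some $x_0\in(0,R_2)$, then at $x_0$ the Euler--Lagrange equation (which holds pointwise by Proposition~\ref{reg_dirichlet_pbm}) gives
\begin{equation*}
0=W'(1)=L_K v_{[0,R_2]}(x_0)=\int_{\R}\big(v_{[0,R_2]}(y)-1\big)K(x_0-y)\,dy\le 0,
\end{equation*}
and the lower bound in~\eqref{main_ellipt} forces $v_{[0,R_2]}\equiv 1$ on $(x_0-r_0,x_0+r_0)$; iterating yields $v_{[0,R_2]}\equiv 1$ on $\R$, contradicting $v_{[0,R_2]}(0)=-1$. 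With this inserted, the ordering is strict on $(R_1,R_2)$ and the direction $v_{[0,R_1]}\ge v_{[0,R_2]}$ follows. You should also record that $\Ec(v^*,[-\ell,\ell])<\infty$ (by Fatou's lemma and the bound in Proposition~\ref{mdss}), which is needed both for $v^*$ to qualify as a local minimizer in the sense of Definition~\ref{defini} and for Proposition~\ref{reg_entire_sol} to apply; once those points are patched, the remainder of the argument is correct.
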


\begin{proof}
We observe that, by minimality, the function~$v_{[0,R]}$ is a weak solution of
\begin{equation*}
{L}_K v_{[0,R]}= W'(v_{[0,R]}) \quad\mbox{in } (0,R).
\end{equation*}
In particular, in view of Proposition~\ref{reg_dirichlet_pbm}, we have that~$v_{[0,R]} \in C^{\bar{\theta}}(\R) \cap C^{2s+\bar{\theta}}(0,R)$ for  some~${\bar{\theta}} \in (0,s)$ and its
H\"{o}lder norm is bounded independently of~$R$ (see\footnote{
A careful analysis of the proof of~\cite[Proposition~1.1]{RS14} shows that the H\"{o}lder norm of the solution to the Dirichlet problem~\eqref{dirichlet_pbm} is bounded by a constant that is independent from~$\Omega$ as a whole, but only depends on the~$C^{1,1}$ norm of its boundary. Then, since in our case~$n=1$, this constant is given regardless from the value of~$R$. 
} \cite[Proposition~1.1]{RS14}). 
Therefore, by means of the Ascoli-Arzel\'a Theorem, there exist~$\theta\in(0,\bar{\theta})$ and a function~$v\in C^{2s+\theta}_{\rm loc}(0,+\infty) \cap L^{\infty}(\R)$ such that, up to subsequences, $v_{[0,R]}\to v$ as~$R \to +\infty$ locally uniformly in~$\R$. 

Also, we claim that~$v$ is a pointwise solution of
\begin{equation}\label{vveif}
{L}_Kv = W\rq{}(v) \quad\mbox{in} \ (0,+\infty).
\end{equation}
In the aim of showing~\eqref{vveif}, we set an arbitrary open set~$(a,b) \Subset (0,+\infty)$. Then, we obtain by~\eqref{resis}, Remark~\ref{remmarco} and Proposition~\ref{mdss} that
\begin{equation*}
\begin{split}&
[v_{[0,R]}]_{H^K(a,b)}^2 = 2 \Hc\left(v_{[0,R]},(a,b)\right) \leq 2 \Hc\left(v_{[0,R]},(a,b+6)\right) \\
&\qquad\leq 2 \Ec\left(v_{[0,R]},(a,b+6)\right) \leq C\Psi_s(b-a+6).
\end{split}
\end{equation*}
Consequently, we can apply Lemma~\ref{eq_stab} and obtain that~$v \in \Hc(a,b) \cap L^{\infty}(\R)$ is a weak solution of~\eqref{vveif}. Then, Proposition~\ref{reg_dirichlet_pbm} gives that~$v$ is a pointwise solution, and therefore
the claim~\eqref{vveif} is established.

Now we show that, for any~$\ell>0$,
\begin{equation} \label{nunfie}
\lim_{R\to +\infty} \Ec (v_{[0,R]},[-\ell,\ell]) = \Ec (v,[-\ell,\ell]).
\end{equation}
For this, we observe that~\eqref{main_ellipt} and the uniform boundedness of~$v_{[0,R]}$ imply that
\begin{equation*}
\left( | v_{[0,R]}(x)- v_{[0,R]}(y) |^2 K(x-y) \right) \in L^1([-\ell,\ell] \times(\R\setminus[-2\ell, 2\ell])).
\end{equation*}
Consequently, by the Dominated Convergence Theorem,
\begin{equation*}\begin{split}&
\lim_{R\to +\infty} \iint_{[-\ell,\ell]\cup (\R\setminus[-2\ell, 2\ell])}
|v_{[0,R]}(x)-v_{[0,R]}(y)|^2 K(x-y) \, dx\, dy\\&\qquad = 
\iint_{[-\ell,\ell]\cup (\R\setminus[-2\ell, 2\ell])}|v(x)-v(y)|^2 K(x-y) \, dx\, dy.
\end{split}
\end{equation*}
{F}rom this and the uniform convergence of~$v_{[0,R]}$
on compact sets of~$\R^n$,
we obtain that
\begin{equation}\label{685943asxcfrtgbhy78ijmko}
\lim_{R\to +\infty} \Hc (v_{[0,R]},[-\ell,\ell]) = \Hc (v,[-\ell,\ell]).
\end{equation}

Moreover, the regularity of the potential~$W$ implies
\begin{equation*}
\lim_{R\to +\infty} \Pc(v_{[0,R]},[-\ell,\ell]) = \Pc (v,[-\ell,\ell]).
\end{equation*}
This and~\eqref{685943asxcfrtgbhy78ijmko}
give~\eqref{nunfie}.

Similarly, one obtains that, for any~$\phi \in C_c^{\infty}(-\ell,\ell)$,
\begin{equation}\label{dnf}
\lim_{R\to +\infty} \Ec (v_{[0,R]}+\phi,[-\ell,\ell]) = \Ec (v+\phi,[-\ell,\ell]).
\end{equation}

Accordingly, from~\eqref{nunfie}, \eqref{dnf}
and Corollary~\ref{alm_min}, we deduce that~$v$
is a class~${A}$ minimizer for~$\Ec$.
In particular, this implies that~$v$ solves
\begin{equation*}
{L}_K v= W'(v) \quad\mbox{in } \R.
\end{equation*}
Thus, since~$v(0)=-1$ and~$W\rq{}(-1)=0$, we can exploit~\eqref{main_ellipt} 
and gather that
\begin{equation*}
0= {L}_K v (0) = \int_{\R}( v(y)+1) K(y)\, dy \geq \lambda \, \int_{-r_0}^{r_0} \frac{( v(y)+1)}{|y|^{1+2s}}\, dy \geq 0.
\end{equation*}
As a consequence, $v =-1$ in~$(-\infty,r_0)$. 

Hence, we can write
\begin{equation*}
0=  {L}_K v (r_0) =  \int_{\R}( v(y)+1) K(r_0-y)\, dy \geq \lambda \,  \int_{0}^{2r_0} \frac{( v(y)+1)}{|r_0-y|^{1+2s}}\, dy \geq 0,
\end{equation*}
so that~$v =-1$ in~$(-\infty,2r_0)$.

Repeating this argument iteratively yields that~$v \equiv -1$ in~$\R$,
wich completes the proof of Proposition~\ref{comepo}.
\end{proof}

\begin{rem}\label{remm}
{\rm
We point out that a similar statement to the one in Proposition~\ref{comepo} holds true for~$v_{[-R,0]}$. In this case,
one obtains that the function~$v_{[-R,0]}$ converges locally uniformly to~$1$ as~$R\to +\infty$. 
}
\end{rem}

     \section{A useful lemma}\label{usflemma}
     
 In this section we present a result that will be useful in the proofs of Theorems~\ref{main_thm} and~\ref{main_thm_symm}.   
   
   We recall the definition of the
   renormalized energy~$\mathscr{G}$ in~\eqref{G_star}
   and of
   the space~${\mathcal{X}}$ in~\eqref{defmathcalixs00}, and we define
     \begin{equation}\label{emme}
     \mathscr{M}:= \big\{ u \in \mathcal{X}\;{\mbox{ s. ~\!t. }}\; \mathscr{G}(u)<+\infty \text{ and~$u$ is a class~A minimizer of} \ \Ec\big\}.
\end{equation}
In addition, for any~$x_0\in \R$, we define the following subset of~$\mathscr{M}$
\begin{equation}\label{emmezero}
 \mathscr{M}^{(x_0)} := \big\{u \in \mathscr{M}\;{\mbox{ s. ~\!t. }}\;x_0= \sup\{x\in \R: u(x)<0\}\big\}.
\end{equation}

\begin{lemma}\label{psc0-500}
Let~\eqref{krn_symm}, \eqref{main_ellipt}, \eqref{pot_reg}, \eqref{pot_zero} and~\eqref{pot_deg} be satisfied. Let~$u^{(0)}$ be a non-decreasing function such that~$u^{(0)} \in \mathscr{M}^{(0)}$. 

Then, for any~$x_0 \in \R$, the function~$u^{(x_0)}:= u^{(0)}(x-x_0)$ is such that
\begin{enumerate}[(i)]
\item $u^{(x_0)} \in \mathscr{M}^{(x_0)}$, \label{iunoi}
\item $u^{(x_0)}$ is strictly increasing, \label{bigi}
\item $\mathscr{M}^{(x_0)}$ is a singleton, in particular~$\mathscr{M}^{(x_0)}= \{ u^{(x_0)}\}$, \label{iiunoii}
\item $u^{(x_0)}$ satisfies the decay estimates in~\eqref{asymp_decay}. \label{9bbf0}
\end{enumerate}

Moreover, if~\eqref{newbound} holds, then
\begin{enumerate}[(i)]
\setcounter{enumi}{4}
\item $u^{(x_0)}$  satisfies the decay estimates in~\eqref{asymp_decay_lowbound} \ \mbox{and~\eqref{398r7gree}}. \label{9873grv}
\end{enumerate}

In addition, if both~\eqref{newbound} and~\eqref{ricdifar} hold, then
\begin{enumerate}[(i)]
\setcounter{enumi}{5}
\item $u^{(x_0)}$  satisfies the decay estimates in~\eqref{eq:asymp-derivata}.\label{iiiiunoiiii}
\end{enumerate}
\end{lemma}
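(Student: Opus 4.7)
The plan is to establish (i)--(vi) roughly in the order (i), (ii), (iv), (v), (vi), (iii), since the decay and regularity estimates feed into the uniqueness step (iii). Item (i) is immediate from the translation-invariance of the kernel $K$, of the potential part of $\mathcal{E}_K$, and of the normalization $\Psi_s$ in~\eqref{G_star}; the formula $u^{(x_0)}(x) = u^{(0)}(x - x_0)$ then directly yields $\sup\{x : u^{(x_0)}(x) < 0\} = x_0$. For (ii), any class A minimizer is a weak, and hence by Proposition~\ref{reg_entire_sol} a $C^{1+2s+\theta}(\R)$ pointwise, solution of $L_K u = W'(u)$. Fix $\tau > 0$ and set $v_\tau(x) := u^{(x_0)}(x + \tau)$: since $u^{(x_0)}$ is non-decreasing, $v_\tau \geq u^{(x_0)}$, and both solve the same equation. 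Proposition~\ref{comp} then yields the dichotomy $v_\tau \equiv u^{(x_0)}$ (impossible, as it would contradict $u^{(x_0)}(\pm\infty) = \pm 1$) or $v_\tau > u^{(x_0)}$ strictly, giving (ii).

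Items (iv)--(v) are obtained by comparing $u^{(x_0)}$ with shifted copies of the barrier from Proposition~\ref{prop_bar}, combined with the convexity inequalities of Lemma~\ref{thc_deg}. For the left-tail upper bound in~\eqref{asymp_decay}, apply Proposition~\ref{prop_bar} with $m = \alpha$ and $R \sim \rho$, and shift the barrier so that its ``trough'' sits near $x = -\rho$: the lower estimate of Lemma~\ref{thc_deg} gives $W'(t) \geq \tfrac{C_1}{\alpha-1}(1+t)^{\alpha-1}$ for $t \in (-1, -1+\xi]$, so choosing $\zeta \leq C_1/(\alpha-1)$ in Proposition~\ref{prop_bar} turns the shifted barrier into a pointwise supersolution of $L_K u = W'(u)$ in the tail, and Proposition~\ref{comp} delivers $1 + u^{(x_0)}(-\rho) \leq C \rho^{-2s/(\alpha-1)}$. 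The right-tail estimate is symmetric with $m = \gamma$. Under~\eqref{newbound}, the lower bound in~\eqref{asymp_decay_lowbound} is obtained by a dual sub-barrier construction: Proposition~\ref{tontobis} provides a matching lower bound $L_K \phi \geq c |x|^{-(1+2s)}$ for a power-type test function $\phi(x) \sim |x|^{-2s(\alpha-\beta+1)/(\alpha-1)}$, while the upper inequality of Lemma~\ref{thc_deg} gives $W'(t) \leq \tfrac{C_2}{\beta-1}(1+t)^{\beta-1}$, so that the exponents balance and comparison delivers the claim. The derivative lower bound~\eqref{398r7gree} then follows from a parallel barrier argument for $\partial_x u^{(x_0)}$, using the pointwise lower bound on $1+u^{(x_0)}$ just obtained.

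For (vi), following the strategy indicated in the discussion after Remark~\ref{optimality}, I avoid a direct linearization (which fails at the degenerate well) and instead combine the convexity estimates of Lemma~\ref{thc_deg} with the two-sided control on $1 + u^{(x_0)}$ from (iv)--(v). Constructing an upper barrier $\psi(x) = C |x|^{-\kappa}$ with $\kappa = 1 + 2s(1 - (\alpha-2)(\alpha-\beta))/(\alpha-1)$, Proposition~\ref{tonto} shows $L_K \psi \sim |x|^{-(1+2s)}$, and the algebraic identity underlying~\eqref{ricdifar} is precisely the one that turns $\psi$ into a pointwise supersolution of the functional inequality satisfied by $\partial_x u^{(x_0)}$; Proposition~\ref{comp} then yields~\eqref{eq:asymp-derivata}. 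Finally, for (iii) I use a sliding argument on the family $\{u^{(x_0+t)}\}_{t \in \R}$. Given $v \in \mathscr{M}^{(x_0)}$, observe that $v$ is itself a $C^{1+2s+\theta}(\R)$ solution of $L_K u = W'(u)$ in $\mathcal{X}$, so the barrier argument of (iv) applies verbatim and gives $v$ the same tail decay. By (ii) the family is strictly monotone in $t$, and by the matching tail estimates one finds $u^{(x_0+t)} \geq v$ on all of $\R$ for $t$ sufficiently negative. Setting $t^* := \inf\{t : u^{(x_0+t)} \geq v \text{ on } \R\}$, continuity in $t$ together with the tail matching force $u^{(x_0+t^*)}$ and $v$ to touch at a finite point; Proposition~\ref{comp} then gives $u^{(x_0+t^*)} \equiv v$, and the normalization in~\eqref{emmezero} pins $t^* = 0$.

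The main obstacle is twofold. On one hand, the sliding argument in (iii) requires the \emph{global} inequality $u^{(x_0+t)} \geq v$ on all of $\R$ for some $t$, which genuinely relies on the sharp tail estimates of (iv)--(v) applied simultaneously to both $u^{(x_0+t)}$ and $v$, rather than on mere local domination. On the other hand, the derivative upper bound in (vi) is computationally the most delicate step: the specific exponent prescribed by~\eqref{ricdifar} forces a careful balancing of the non-local operator (through Propositions~\ref{tonto} and~\ref{tontobis}) against the refined convexity behavior of $W$ near the wells encoded in Lemma~\ref{thc_deg}, and it is precisely this balancing---rather than a direct linearization---that constitutes the novel technical input in the degenerate setting.
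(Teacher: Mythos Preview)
Your plan has two genuine gaps, both stemming from the decision to postpone (iii) until after the decay estimates.

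\textbf{Gap in (iii).} Your sliding argument for uniqueness requires, for some $t$, the global inequality $u^{(x_0+t)}\ge v$. At the left tail this forces a \emph{lower} bound on $1+u^{(x_0+t)}$ against an \emph{upper} bound on $1+v$. The lower bound is~\eqref{asymp_decay_lowbound}, which is part of (v) and needs the extra hypothesis~\eqref{newbound}; but (iii) is claimed under~\eqref{krn_symm}--\eqref{main_ellipt} only. Moreover, the upper bound~\eqref{asymp_decay} for the arbitrary competitor $v\in\mathscr{M}^{(x_0)}$ is not available either: the barrier proof of (iv) uses strict monotonicity of the profile at several points (to localise the touching set and to rule out touching on the wrong half-line), and nothing in the definition of $\mathscr{M}^{(x_0)}$ guarantees that $v$ is monotone. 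The paper circumvents both problems by proving (iii) \emph{before} any decay estimate, via an $\epsilon$-perturbed sliding: one slides $u(\cdot+k)+\epsilon$ over $u^{(x_0)}$, so that the $+\epsilon$ alone forces a starting position $k_\epsilon$ and a bounded touching point $x_\epsilon$; then Lemma~\ref{thc_deg} (the strict convexity of $W'$ near the wells) is used to show $x_\epsilon$ stays bounded as $\epsilon\searrow 0$, after which one passes to the limit and applies the strong comparison principle. No tail information on $v$ is needed.

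\textbf{Gap in (v).} Your description obtains~\eqref{asymp_decay_lowbound} first, by comparing $1+u^{(0)}$ directly with a power $\phi\sim|x|^{-2s(\alpha-\beta+1)/(\alpha-1)}$ via Proposition~\ref{tontobis} and the upper inequality $W'(t)\le \tfrac{C_2}{\beta-1}(1+t)^{\beta-1}$. But this comparison does not close: at a touching point one gets $L_K(1+u^{(0)})\le C(1+u^{(0)})^{\beta-1}$ against $L_K\phi\ge c|x|^{-(1+2s)}$, and equating the exponents forces $p(\beta-1)\le 1+2s$, which does not recover the claimed $p=2s(\alpha-\beta+1)/(\alpha-1)$. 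The paper proceeds in the opposite order: differentiate the equation to obtain $L_K(u^{(0)})'=W''(u^{(0)})(u^{(0)})'$, feed in the \emph{upper} bounds $W''(u^{(0)})\le C_2(1+u^{(0)})^{\beta-2}$ and $1+u^{(0)}\le C|x|^{-2s/(\alpha-1)}$ from (iv), build a competing $\phi$ with exponent $1+\tfrac{2s(\alpha-\beta+1)}{\alpha-1}$ so that Proposition~\ref{tontobis} gives exactly $L_K\phi\ge \widetilde C\,|x|^{-2s(\beta-2)/(\alpha-1)}\phi$, and run a touching argument to get~\eqref{398r7gree}. The function lower bound~\eqref{asymp_decay_lowbound} then follows by integrating $(u^{(0)})'$. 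Your outline for (vi) is essentially correct and matches the paper, but note that it consumes~\eqref{asymp_decay_lowbound}, so it too depends on getting the order in (v) right.
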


Being quite long, the proof of Lemma~\ref{psc0-500} is divided in five separate subproofs.

\begin{proof}[Proof of statements~\eqref{iunoi} and~\eqref{bigi} of Lemma~\eqref{psc0-500}]
Let us recall that, by minimality, $u^{(0)}$ is a weak solution of 
\begin{equation}\label{miser45}
{L}_K u^{(0)} = W\rq{}(u^{(0)}) \quad\mbox{in } \R.
\end{equation}
Thus, $u^{(0)}: \R \to [-1,1]$ is such that
\begin{equation}\label{propert45}\begin{split}&
{\mbox{$u^{(0)}\in C^{1+2s+\theta}(\R)\cap L^{\infty}(\R)$
for some~$\theta\in(0,1)$,}}\\&{\mbox{$u^{(0)}$ is non-decreasing and~$u^{(0)}(0)=0$,}}\end{split}
\end{equation}
where the regularity follows from Proposition~\ref{reg_entire_sol}. 

Now, \eqref{iunoi} is a consequence of the translation invariance.

Moreover, since~\eqref{krn_symm} and~\eqref{main_ellipt} hold true, we can make use of~\cite[Lemma~4.6]{CP16} and infer that~$u^{(0)}$ is strictly increasing, which is~\eqref{bigi}.
\end{proof}

We mention here that~$u^{(x_0)} \in \mathcal{X}$ and~\eqref{bigi}  implies that
\begin{equation}\label{big}
|u^{(0)}(x) | < 1 \quad\mbox{for any} \ x \in \R.
\end{equation}

\begin{proof}[Proof of statement~\eqref{iiunoii} of Lemma~\eqref{psc0-500}]
Let~$u \in \mathscr{M}^{(x_0)}$.
In order to establish~\eqref{iiunoii}, we prove that
\begin{equation}\label{aAGTTET54y547u65uJK}
u \equiv u^{(x_0)}.\end{equation}

Since~$u$ is a weak solution of~\eqref{miser45}, by Proposition~\ref{reg_entire_sol} we have that~$u \in C^{1+2s+\theta}(\R)$ for some~$\theta>0$.

Also, we know that~$|u | \leq 1$ in~$\R$. Thus, for any~$\epsilon\in(0,1)$, we can find~$k(\epsilon)\in\R$ such that, for any~$k \in [k(\epsilon),+\infty)$,
\begin{equation*}
u(x+k)+\epsilon > u^{(x_0)}(x) \quad\mbox{for any } x \in \R.
\end{equation*}
Now, we take~$k$ as small as possible with this property; that is, we take~$k_{\epsilon}$ such that
\begin{equation}\label{nonlab}
u(x+k_{\epsilon})+\epsilon > u^{(x_0)}(x) 
\quad\mbox{for any } x \in \R
\end{equation}
and there exist a sequence~$\eta_{j,\epsilon}\in[ 0,1)$ and  points~$x_{j,\epsilon} \in \R$ satisfying
\begin{equation}\label{nonlabt}
\lim_{j \to +\infty}\eta_{j,\epsilon}=0
\end{equation}
and
\begin{equation}\label{nonlabtt}
u(x_{j,\epsilon}+(k_{\epsilon}-\eta_{j,\epsilon})) +\epsilon \leq  u^{(x_0)}(x_{j,\epsilon})\quad\mbox{for any }j \in \N.
\end{equation}

We point out that~$x_{j,\epsilon}$ must be bounded in~$j$. Otherwise, if
\begin{equation*}
\lim_{j \to  +\infty} x_{j, \epsilon}= \pm \infty,
\end{equation*}
we would have by~\eqref{nonlabt} and~\eqref{nonlabtt} that
\begin{equation*}
\pm 1+\epsilon = \lim_{j \to + \infty} u(x_{j,\epsilon}+(k_{\epsilon} -\eta_{j,\epsilon})) +\epsilon \leq  \lim_{j \to + \infty} u^{(x_0)}(x_{j,\epsilon}) = \pm 1,
\end{equation*}
which gives a contradiction. 

As a consequence, there exists~$x_{\epsilon} \in \R$ such that, up to a subsequence, $$\lim_{j \to +\infty} x_{j,\epsilon}=x_{\epsilon}.$$ Accordingly, setting~$u_{\epsilon}(x):= u(x+k_{\epsilon})+\epsilon$,
formulas~\eqref{nonlab} and~\eqref{nonlabtt} give that
\begin{equation}\label{eqfo}
u_{\epsilon}(x) \geq u^{(x_0)}(x) \;\mbox{ for any } x \in \R \qquad
{\mbox{and}}\qquad
u_{\epsilon}(x_{\epsilon})= u^{(x_0)}(x_{\epsilon}) .
\end{equation}
Also, by translation invariance, we see that
\begin{equation*}
{L}_K u_{\epsilon}(x) = W\rq{}(u_{\epsilon}(x)-\epsilon),
\end{equation*}
and therefore, by~\eqref{eqfo},
\begin{equation}\label{questdt}
\begin{split}&
W\rq{}\left( u^{(x_0)}(x_{\epsilon})-\epsilon \right) - W\rq{}\left( u^{(x_0)}(x_{\epsilon}) \right) = W\rq{}\left( u_{\epsilon}(x_{\epsilon})-\epsilon \right) -  W\rq{}\left( u^{(x_0)}(x_{\epsilon}) \right)\\ 
&\qquad=  {L}_K u_{\epsilon}(x_{\epsilon}) - {L}_K u^{(x_0)}(x_{\epsilon})
= \int_{\R} \left(u_{\epsilon}(y)-u^{(x_0)}(y)\right) K(x_{\epsilon}-y) \, dy \geq 0.
\end{split}
\end{equation}

Now, we claim that
\begin{equation}\label{clami}
{\mbox{$x_{\epsilon}$ is bounded uniformly in~$\epsilon$.}}
\end{equation}
For this, we argue by contradiction and suppose that, up to subsequences, $x_{\epsilon}\to +\infty$ as~$\epsilon \searrow 0$ (the contradiction in case~$x_{\epsilon}\to -\infty$ would be obtained through a symmetric argument).
Then, there exists~$\bar{\epsilon}\in(0,1)$ such that, for any~$\epsilon \in (0,\bar{\epsilon})$,
\begin{equation*}
u^{(x_0)}(x_{\epsilon}) \geq 1-\frac{\xi}{2},
\end{equation*}
where~$\xi$ is given in~\eqref{pot_deg}. Therefore, for any~$\epsilon \in (0, \min\{\bar{\epsilon}, \xi/2 \})$,
\begin{equation*}
1-\xi \leq u^{(x_0)}(x_{\epsilon})-\epsilon \leq u^{(x_0)}(x_{\epsilon}) \leq 1.
\end{equation*}
In this way, we can exploit Lemma~\ref{thc_deg} with~$t:=u^{(x_0)}(x_{\epsilon})$ and~$r:= u^{(x_0)}(x_{\epsilon})-\epsilon$ and obtain that
\begin{equation*}
\begin{split}
W\rq{}(u^{(x_0)}(x_{\epsilon})) &\geq W\rq{}(u^{(x_0)}(x_{\epsilon})-\epsilon) + \frac{C_3}{\gamma-1} \left[ \left(1-u^{(x_0)}(x_{\epsilon})+\epsilon\right)^{\gamma-1} - \left(1-u^{(x_0)}(x_{\epsilon}) \right)^{\gamma-1} \right]\\
&> W\rq{}(u^{(x_0)}(x_{\epsilon})-\epsilon).
\end{split}
\end{equation*}
This is in contradiction with~\eqref{questdt}, 
and therefore the claim in~\eqref{clami} is established. 

Now, by~\eqref{clami}, we have that, up to subsequences,
\begin{equation}\label{clami_t}
\lim_{\epsilon \to 0} x_{\epsilon} = \bar{x}\in\R.
\end{equation}

Furthermore, we show that there exists~$\bar{k}\in \R$ such that, up to subsequences,
\begin{equation}\label{k_conv}
\lim_{\epsilon \to 0} k_{\epsilon} = \bar{k}.
\end{equation}
Indeed, if~$k_{\epsilon}\to\pm \infty$, then it would follow from~\eqref{eqfo} and~\eqref{clami_t} that
\begin{equation*}
\pm 1=\lim_{\epsilon\to 0}  u(x_{\epsilon}+k_{\epsilon})+\epsilon= \lim_{\epsilon\to 0} u^{(x_0)}(x_{\epsilon}) =  u^{(x_0)}(\bar{x}),
\end{equation*}
which is not the case since~$u^{(x_0)}(\bar{x}) \in (-1,1)$ by~\eqref{big}. This proves~\eqref{k_conv}.

We can then take the limit as~$\epsilon\searrow0$ in~\eqref{eqfo}
and use~\eqref{clami_t} and~\eqref{k_conv} to obtain that
\begin{equation}\label{kexpv}
\begin{cases}
u(x+\bar{k})\geq u^{(x_0)}(x)  &\text{for any} \ x \in \R,\\
u(\bar{x}+\bar{k})= u^{(x_0)}(\bar{x}).
\end{cases}
\end{equation}

Now, we claim that
\begin{equation}\label{dxgide}
u(x+\bar{k})= u^{(x_0)}(x) \quad\mbox{for all } x\in \R.
\end{equation}
In order to show this, we define the function~$v:=u(\cdot +\bar{k})- u^{(x_0)}$ and observe that, by~\eqref{kexpv}, it holds that~$v\geq 0$ in~$\R$ and~$v(\bar{x})=0$. Thus, since both~$u(\cdot +\bar{k})$ and~$u^{(x_0)}$ belong to~$\mathscr{M}$, we have that
$$
{L}_K v (\bar{x})= W\rq{}(u(\bar{x}+\bar{k})) -W\rq{}(u^{(x_0)}(\bar{x}))=0.
$$
Therefore, exploiting~\eqref{main_ellipt},
\begin{equation*}
0= {L}_K v(\bar{x}) = \int_{\R} v(y) K(\bar{x}-y) \, dy \geq \lambda \, \int_{\bar{x}-r_0}^{\bar{x}+r_0} \frac{v(y)}{| \bar{x}- y |^{1+2s}}\, dy \geq 0 .
\end{equation*}
As a consequence, $u(x+\bar{k})=u^{(x_0)}(x)$ for all~$x\in[\bar{x}-r_0, \bar{x}+r_0]$.

Then, we see that
\begin{equation*}
0= {L}_K v(\bar{x}+r_0) = \int_{\R} v(y) K(\bar{x}+r_0-y) \, dy \geq \lambda \, \int_{\bar{x}}^{\bar{x}+2r_0} \frac{v(y)}{| \bar{x}+r_0- y |^{1+2s}}\, dy \geq 0 
\end{equation*}
and
\begin{equation*}
0= {L}_K v(\bar{x}-r_0) = \int_{\R} v(y) K(\bar{x}-r_0-y) \, dy \geq \lambda \, \int_{\bar{x}-2r_0}^{\bar{x}} \frac{v(y)}{|\bar{x}-r_0- y |^{1+2s}}\, dy \geq 0.
\end{equation*}
Therefore, $u(x+\bar{k})=u^{(x_0)}(x)$ in~$[\bar{x}-2r_0, \bar{x}+2r_0]$.

Repeating this argument iteratively yields~\eqref{dxgide}.

Now, since~$u \in \mathscr{M}^{(x_0)}$ and is continuous, it must be that~$u(x_0)=0$. 
Moreover, by~\eqref{dxgide},
$$  u(x_0+\bar{k})= u^{(x_0)}(x_0)=u^{(0)}(0)=0. $$ 
Therefore,
$$ u^{(x_0)}(x_0)=u(x_0+\bar{k})=u(x_0)=u^{(x_0)}(x_0-\bar{k}).
$$
Since~$u^{(x_0)}$ is strictly increasing (thanks to~\eqref{bigi}),
this implies that~$\bar{k}=0$.
Plugging this information into~\eqref{dxgide},
we obtain~\eqref{aAGTTET54y547u65uJK}, as desired.
\end{proof}

\begin{proof}[Proof of statement~\eqref{9bbf0} of Lemma~\eqref{psc0-500}]
For simplicity, we show that~\eqref{9bbf0} holds for~$u^{(0)}$
(as this implies the estimate for~$u^{(x_0)}$ for any~$x_0\in\R$). 

We recall the notation in~\eqref{pot_deg} and we set
\begin{equation}\label{vttv4869302}
 c:= \min\left\{ \frac{C_1}{\alpha-1}, \xi \right\}.
\end{equation}
We exploit Proposition~\ref{prop_bar} with~$\zeta:=c$ and~$ m:= \alpha$, and we consider the barrier~$w$ constructed there.
 
{F}rom the properties of~$w$ in~\eqref{barrier} and~\eqref{bar_outside_interval} and the fact that~$u^{(0)} \in \mathcal{X}$, we deduce that there exists~$\kappa \in (0,+\infty)$ such that~$w(x+k) > u^{(0)}(x)$ for all~$x \in \R$ and all~$k \in [\kappa,+\infty)$. In particular, we can choose~$\bar{k}\leq \kappa$, an infinitesimal sequence~$\eta_j \in [0,1)$ and points~$x_j \in \R$ such that
    	\begin{align}
&w(x+k) >u^{(0)}(x) \qquad \mbox{for all } k> \bar{k} \mbox{ and any } x \in \R, \label{maj} \\
{\mbox{and }}\quad
&w(x_j+\bar{k}-\eta_j)\leq u^{(0)}(x_j) \qquad \mbox{for any } j. \label{min}
\end{align}
In particular, this and~\eqref{big} give that 
\[|w(x_j+\bar{k}-\eta_j)| <1. \]

As a consequence, for any~$R$ sufficiently large, as given by
Proposition~\ref{prop_bar},
\begin{equation}\label{fus}
 \big|x_j+\bar{k}-\eta_j\big| < R.
\end{equation} 
This gives that
\[|x_j| \leq R+ |\bar{k}|+1,\]
namely, $x_j$ is bounded uniformly in~$j$,
and therefore there exists~$\bar{x}\in\R$ such that, up to a subsequence,
\begin{equation}\label{lim}
\lim_{j \to +\infty} x_j = \bar{x}.
\end{equation} 
  
Also, using~\eqref{fus} we gather that
\begin{equation}\label{pl}
|\bar{x}+\bar{k}| \leq R.
\end{equation}
Moreover, from~\eqref{maj}, \eqref{min} and~\eqref{lim} we find that
\begin{equation}\label{eu}
w(\bar{x}+\bar{k})= u^{(0)}(\bar{x}).
\end{equation}
In light of this and~\eqref{big}, we can refine~\eqref{pl} into
\begin{equation}\label{refinem}
|\bar{x}+\bar{k}| < R.
\end{equation}

Now we claim that
\begin{equation}\label{claim_maj}
u^{(0)}(\bar{x}) \geq-1+c.
\end{equation}
We prove this fact by contradiction, by supposing instead that
\begin{equation}\label{758493bdyfuew12345fcgh}
u^{(0)}(\bar{x}) \in (-1, -1+c).\end{equation}
We define
\[ \Omega:= \big\{ x \in (-\bar{k}-R, -\bar{k}+R)\;{\mbox{ s. ~\!t. }}\; u^{(0)}(x)\in(-1,-1+c )\big\}.\]
By~\eqref{refinem} and~\eqref{758493bdyfuew12345fcgh} we obtain that~$\bar{x} \in \Omega$. Then, the monotonicity of~$u^{(0)}$ gives that, for every~$x\in(-\bar{k}-R, \bar{x}]$,
$$ u^{(0)}(x)\le u^{(0)}(\bar{x})<-1+c,
$$
and therefore~$(-\bar{k}-R, \bar{x}] \subset \Omega$.
In addition, $\Omega$ is open since~$u^{(0)}$ is continuous. 

Now, Lemma~\ref{thc_deg}
(used here with~$t:=u^{(0)}(x)$ and~$r:=-1$) gives that, for all~$x\in\Omega$,
\begin{equation}\label{fnhekfhwuti732trufgekjfguwyt34i7t5qwqazwsx}
L_K u^{(0)}(x)=W\rq{}(u^{(0)}(x))\ge \frac{C_1}{\alpha-1} (1+ u^{(0)}(x))^{\alpha-1}\ge c(1+ u^{(0)}(x))^{\alpha-1}. 
\end{equation}
Furthermore, we set~$w_{\bar{k}}(x):= w(x +\bar{k})$ and we obtain by~\eqref{bar_sol} that, for any~$x \in (-\bar k-R, -\bar k+R)$,
\begin{equation*}
{L}_K w_{\bar{k}}(x) \leq c (1+ w_{\bar{k}}(x))^{\alpha-1}.
\end{equation*}
This, \eqref{maj} and~\eqref{fnhekfhwuti732trufgekjfguwyt34i7t5qwqazwsx}
lead to the following situation:
\begin{equation*}
\begin{cases}
{L}_K w_{\bar{k}}\leq c (1+ w_{\bar{k}})^{\alpha-1} &\mbox{in } \Omega, \\
{L}_K u^{(0)} \geq c (1+ u^{(0)})^{\alpha-1} &\mbox{in } \Omega, \\
w_{\bar{k}} \geq u^{(0)} &\mbox{in } \R. \\
\end{cases}
\end{equation*}
That is, the assumptions of Proposition~\ref{comp} are satisfied
with~$f_1(x,w(x))=f_2(x,w(x))=c (1+ w(x))^{\alpha-1}$.
Hence, since~$\bar x\in\Omega$, from~\eqref{eu}
we conclude that~$\omega_{\bar{k}}= u^{(0)}$ in~$\Omega$.

Thus, exploiting the continuity of~$u^{(0)}$ and~\eqref{big}, 
and recalling also~\eqref{bar_outside_interval},
we obtain that
\begin{equation*}
1 =\lim_{x \to -\bar{k}-R} w({\bar{k}}+x)
= \lim_{x \to -\bar{k}-R} w_{\bar{k}}(x) 
=  \lim_{x \to -\bar{k}-R} u^{(0)}(x)<1,
\end{equation*}
which is a contradiction. The claim in~\eqref{claim_maj}
is thereby established.

We now show that
\begin{equation}\label{mved}
\bar{x}+\bar{k} \in [0, R).
\end{equation}
In light of~\eqref{refinem}, to obtain~\eqref{mved} it is enough to check that~$\bar{x}+\bar{k}\geq 0 $. 
This is proved by contradiction supposing that~$\bar{x}+\bar{k}<0$
and setting~$\widetilde{k}:=-(\bar{x}+\bar{k})>0$. We exploit the fact that~$w$ is even
(recall Proposition~\ref{prop_bar})
and~\eqref{maj} (used here for~$x:=\widetilde{k}-\bar k$)
and we find that
\begin{equation}\label{vxzbncbWQHBGUHJGjh}
w_{\bar{k}}(\bar{x})= w_{-\bar{k}}(-\bar{x})= w(\widetilde{k}) 
=w((\widetilde{k}-\bar k) +\bar k)> u^{(0)}(\widetilde{k}- \bar k)
= u^{(0)}(-\bar{x}-2\bar k).
\end{equation}
Notice that, since~$-(\bar{x}+\bar{k})>0$, we have that~$-\bar{x}-2\bar k>\bar x$, and therefore from the strict monotonicity of~$u^{(0)}$ we deduce that~$u^{(0)}(-\bar{x}-2\bar k)>
u^{(0)}(\bar{x})$. This and~\eqref{vxzbncbWQHBGUHJGjh}
give that~$w_{\bar{k}}(\bar{x})>u^{(0)}(\bar{x})$,
which is in contradiction with~\eqref{eu}. Thus~\eqref{mved} holds true.

Now, let
\begin{equation}\label{y_range}
y \in \left[\frac{R}{3}, \frac{R}{2}\right].
\end{equation}
In this way, by~\eqref{mved}, we get that
\begin{equation*}
\bar{x}+\bar{k}-y \in \left[  - \frac{R}{2} ,\frac{2R}{3}  \right] \subset \left[-\frac{2R}{3}  ,\frac{2R}{3}\right].
\end{equation*}
As a consequence, in light of~\eqref{bar_decay},
\begin{equation}\label{est_guz}
1+ w(\bar{x}+\bar{k}-y ) \leq C \big(R+1-|\bar{x}+\bar{k}-y | \big)^{-\frac{2s}{\alpha-1}}\leq C (R/3)^{-\frac{2s}{\alpha-1}} \leq \widetilde{C} \, y^{-\frac{2s}{\alpha-1}},
\end{equation}
for some positive constant~$\widetilde{C}$.

Now, let~$\widetilde x\in\R$ be such that~$u^{(0)}(\widetilde x)= -1+c/2$. By the strict monotonicity of~$u^{(0)}$ and~\eqref{claim_maj}, it follows that~$\widetilde x<\bar{x}$. Consequently, putting together~\eqref{maj} and~\eqref{est_guz}, we obtain that
\begin{equation}\label{iq}
u^{(0)}(\widetilde x-y) < u^{(0)}(\bar{x}-y) \leq w(\bar{x}+\bar{k}-y) \leq -1+\widetilde{C} \, y^{-\frac{2s}{\alpha-1}},
\end{equation}
for any~$y$ as in~\eqref{y_range}.

Since~$\widetilde x$ and~$\widetilde{C}$ are independent of~$R$, and~$R$ can be taken as large as desired, \eqref{iq} says that,
if~$x\in(-\infty,0)$ and~$|x|$ is sufficiently large,
\begin{equation}\label{289dy793736ggd980}
u^{(0)}(x) \leq -1+ \widetilde{C} |x|^{-\frac{2s}{\alpha-1}},
\end{equation}
which establishes the first estimate in~\eqref{asymp_decay}.

We now show the second estimate in~\eqref{asymp_decay}.
To this aim, we define the function~$v^{(0)}(x) := u^{(0)}(-x)$
and we notice that~$v^{(0)}$ inherits the regularity
properties from~$u^{(0)}$ and it is strictly decreasing.

{F}rom~\eqref{big} and the fact that~$u^{(0)}\in{\mathcal{X}}$, we also see that
$$
| v^{(0)}| < 1 \ \mbox{in} \ \R
\qquad\mbox{and}\qquad
\lim_{x\to\pm\infty}v^{(0)}(x)=\mp1 .
$$
Moreover, for all~$x\in\R$,
$$ L_K v^{(0)}(x) = L_K u^{(0)} (-x)=W\rq{}(u^{(0)}(-x)).
$$

Hence, we are in the position of exploiting the first part of this proof that took care of the first estimate in~\eqref{asymp_decay}, applied now to~$v^{(0)}$,
with the only caveat that Proposition~\ref{prop_bar} must be used here with~$c$ in~\eqref{vttv4869302} replaced by
$$ \min\left\{ \frac{C_3}{\gamma-1}, \xi \right\}$$
and~$m:=\gamma$.

In this way, we obtain that
        $$
          1-v^{(0)}(x) \leq \widetilde{C} |x|^{-\frac{2s}{\gamma-1}} \quad\mbox{if } x\leq -R.$$
Namely,
        $$
          1-u^{(0)}(-x) \leq \widetilde{C} |x|^{-\frac{2s}{\gamma-1}} \quad\mbox{if } x\leq -R,$$
which gives the second estimate in~\eqref{asymp_decay}, as desired.
\end{proof}

\begin{proof}[Proof of statement~\eqref{9873grv} of Lemma~\ref{psc0-500}]
For simplicity, we show that~\eqref{9873grv} holds for~$u^{(0)}$
(as this implies the estimates for~$u^{(x_0)}$ for any~$x_0\in\R$).

Thanks to the regularity of~$u^{(0)}$, as provided by~\eqref{propert45},
we can differentiate~\eqref{ru3qio2ruebn68594} and find that, for all~$x\in\R$,
$$ {L}_K (u^{(0)})\rq{}(x) = W\rq{}\rq{}(u^{(0)}(x))  (u^{(0)})\rq{} (x).$$
Therefore, using~\eqref{pot_deg} and~\eqref{asymp_decay}, and recalling also that~$u^{(0)}$ is strictly increasing,
we obtain that there exist~$\widetilde{C}>0$ and~$x_0>0$ such that
\begin{equation}\label{90377djoi}
\begin{split}
L_K ( u^{(0)})\rq{}(x) &\leq
\begin{cases}C_2 \big(1+u^{(0)}(x)\big)^{\beta-2}(u^{(0)})\rq{}(x) &\mbox{if } x\leq -x_0, \\
C_4\big(1-u^{(0)}(x)\big)^{\delta-2}(u^{(0)})\rq{}(x) &\mbox{if } x\geq x_0.
\end{cases}\\
&\leq \begin{cases}
\widetilde{C} |x |^{-\frac{2s(\beta-2)}{\alpha-1}}(u^{(0)})\rq{}(x) &\mbox{if } x\leq -x_0,\\
\widetilde{C}|x |^{-\frac{2s(\delta-2)}{\gamma-1}}(u^{(0)})\rq{}(x) &\mbox{if } x\geq x_0.
\end{cases}
\end{split}
\end{equation}

Now, we consider~$\phi \in C^{\infty}(\R)$ such that
\begin{equation}\label{3wertyjxsdcvb0ffdj88}
\phi(x) := \begin{cases}
|x |^{-\left(1+\frac{2s(\alpha-\beta+1)}{\alpha-1}\right)}  &\mbox{if } x \leq -x_0, \\
|x |^{-\left(1+\frac{2s(\gamma-\delta+1)}{\gamma-1}\right)}  &\mbox{if } x \geq x_0.
\end{cases}
\end{equation}
Also, we ask that~$\phi >0$ in~$\R$ and that 
\begin{equation*}
\int_{-x_0}^{x_0} \phi(x) \, dx \geq \frac{2\widetilde{C}}{\lambda},
\end{equation*}
where~$\lambda$ is the quantity appearing in~\eqref{newbound}.

As a consequence, by means of Proposition~\ref{tontobis}
(used here with~$\kappa:=x_0$, $\sigma:=1+\frac{2s(\alpha-\beta+1)}{\alpha-1}$ and~$\tau:=1+\frac{2s(\gamma-\delta+1)}{\gamma-1}$)
we have that,  
\begin{eqnarray*} &&\lim_{|x | \to  +\infty} |x|^{1+2s} {L}_K\phi(x) \geq \lambda \left( \frac{x_0^{-
\frac{2s(\alpha-\beta+1)}{\alpha-1}}( \alpha-1)}{2s(\alpha-\beta+1)}+ \int_{-x_0}^{x_0} \phi(y) \, dy+
\frac{x_0^{-\frac{2s(\gamma -\delta+1) }{\gamma-1}}(\gamma-1)}{2s(\gamma -\delta+1)}
\right)\\&&\qquad \qquad \ge
\lambda  \int_{-x_0}^{x_0} \phi(y) \, dy
\ge 2\widetilde{C}.
\end{eqnarray*}
Hence, for some~$x_1>x_0$, we have that, for all~$x\in(-\infty, x_1)\cup(x_1,+\infty)$,
\begin{equation}\label{1afdbf75gjviy87jjmnmkjo} L_K\phi(x)\ge \frac{\widetilde{C}}{|x|^{1+2s}}.\end{equation}

We now point out that, if~$x\le -x_0$,
\begin{equation*}
\frac{\phi(x)}{|x|^{\frac{2s(\beta-2)}{\alpha-1}}}
=\frac{|x |^{-\left(1+\frac{2s(\alpha-\beta+1)}{\alpha-1}\right)}}{|x|^{\frac{2s(\beta-2)}{\alpha-1}}}=\frac1{|x|^{1+2s}}
\end{equation*}
and similarly, if~$x\ge x_0$,
\begin{equation*}
\frac{\phi(x)}{|x|^{\frac{2s(\delta-2)}{\gamma-1}}}
=\frac{|x |^{-\left(1+\frac{2s(\gamma-\delta+1)}{\gamma-1}\right)}}{|x|^{\frac{2s(\delta-2)}{\gamma-1}}}=\frac1{|x|^{1+2s}}.
\end{equation*}
Hence, from these observations and~\eqref{1afdbf75gjviy87jjmnmkjo}, we deduce that
\begin{equation}\label{p48d4d}
	L_K \phi (x) \geq \begin{cases}
\widetilde{C} |x|^{-\frac{2s(\beta-2)}{\alpha-1}} \phi(x) &\mbox{if } x \leq -x_1,\\
\widetilde{C} |x|^{-\frac{2s(\delta-2)}{\gamma-1}} \phi(x) &\mbox{if } x \geq x_1.
\end{cases}
\end{equation}

Now, we set
\begin{equation*}
\widehat{C}:=\max_{x \in [-x_1,x_1]}\phi(x) \left(  \min_{x \in [-x_1,x_1]} (u^{(0)})\rq{} (x)\right)^{-1}
\end{equation*}
and we notice that, for any~$|x | < x_1$,
\begin{equation}\label{546HJHDGILUIDyre65t}
\widehat{C} (u^{(0)})\rq{} (x) - \phi(x)  \geq 0. 
\end{equation}

We claim that
\begin{equation}\label{039hgvce0}
\widehat{C} (u^{(0)})\rq{} (x) \geq \phi(x) \quad\mbox{for any } x \in \R.
\end{equation}
In order to prove the claim, we define, for any~$b \in [0,+\infty)$, the function~${v_b:= \widehat{C} (u^{(0)})\rq{}+b-\phi}$.
Since~$(u^{(0)})\rq{}>0$ and~$\phi$ is bounded, for any~$b\ge \|\phi\|_{L^\infty(\R)}$, we have that~$v_b>0$ in~$\R$.

Now, if~$v_b > 0$ in~$\R$ for any~$b \in [0,+\infty)$, the claim in~\eqref{039hgvce0} plainly follows taking~$b=0$. Hence, from now on, we suppose that there exists~$b_0\in(0,+\infty)$
such that~$v_b>0$ for all~$b\in(b_0,+\infty)$ and~$v_{b_0}(z)=0$ at some point~$z\in\R$.

By the definition of~$b_0$, there exist points~$x_k$ such that~$v_{b_0}(x_k)< 2^{-k}$. Without loss of generality, we may suppose that~$x_k\leq 0$ (otherwise,
in what follows, we use the information coming from the decay at~$+\infty$).

Furthermore, the sequence~$x_k$ is bounded from below since, if not, we would have
\begin{equation*}
b_0 = \limsup_{k \to -\infty} \left( \widehat{C} (u^{(0)})\rq{}(x_k)+b_0 - \phi(x_k) \right) = \lim_{k \to -\infty} v_{b_0}(x_k) = 0,
\end{equation*}
which is a contradiction. 

Moreover, exploiting~\eqref{546HJHDGILUIDyre65t}, we obtain for any~$ |x | \leq x_1$ and~$k > -\log_{2}b_0$ 
\begin{equation*}
 2^{-k} < b_0 \leq \widehat{C} (u^{(0)})\rq{} (x) - \phi(x)+b_0 = v_{b_0}(x).
\end{equation*}
As a consequence, since~$v_{b_0}(x_k)\leq 2^{-k}$, we have that~$x_k\in (-\infty, -x_1]$ for any~$k > -\log_{2} b_0 $.

Gathering these pieces of information,
we conclude that there exists~$x_{\infty} \in (-\infty, -x_1]$ such that~$x_k \to x_{\infty}$, up to a subsequence, as~$k\to+\infty$. 
The continuity of~$v_{b_0}$ also gives that~$v_{b_0}({x_{\infty}})=0$.

As a result, since~$x_{\infty}\leq -x_1$, we can exploit~\eqref{90377djoi} and~\eqref{p48d4d} to compute
\begin{equation*}
\begin{split}
L_K v_{b_0} ({x_{\infty}}) &= \widehat{C} L_K (u^{(0)})\rq{}({x_{\infty}}) -  L_K \phi ({x_{\infty}})\\
&\leq \widetilde{C} | {x_{\infty}} |^{-\frac{2s(\beta-2)}{\alpha-1}} (\widehat{C} (u^{(0)})\rq{}({x_{\infty}}) - \phi ({x_{\infty}}))\\
&= - b_0 \widetilde{C}| {x_{\infty}}|^{-\frac{2s(\beta-2)}{\alpha-1}}\\&<0.
\end{split}
\end{equation*}
On the other hand, we have that
\begin{equation*}
L_K v_{b_0} ({x_{\infty}}) = \int_{\R} (v_{b_0}(y)-v_{b_0}({x_{\infty}})) K({x_{\infty}}-y) \, dy = \int_{\R} v_{b_0}(y) K({x_{\infty}}-y) \, dy \geq 0.
\end{equation*}
We thereby obtain the desired contradiction, which completes the proof of~\eqref{039hgvce0}.

Formulas~\eqref{3wertyjxsdcvb0ffdj88} and~\eqref{039hgvce0} yield the estimates in~\eqref{398r7gree}.

Hence we now focus on the proof of the estimates in~\eqref{asymp_decay_lowbound}. To this aim, exploiting the regularity of~$u^{(0)}$, we apply the Fundamental Theorem of Calculus 
and use the first estimate in~\eqref{398r7gree} to obtain that, for any~$x \leq -x_0$,
\begin{equation*}
\begin{split}&
u^{(0)}(x)+1 = \int_{-\infty}^x (u^{(0)})\rq{}(y) \, dy \geq  \widehat{C}\int_{x}^{+\infty}  y^{-\left( 1+\frac{2s(\alpha-\beta+1)}{\alpha-1}\right)}\, dy\\ &\qquad\qquad= \frac{(\alpha-1)}{2s\widehat{C}(\alpha-\beta+1)} | x |^{-\frac{2s(\alpha-\beta+1)}{\alpha-1}}.
\end{split}
\end{equation*}
This provides the first estimate in~\eqref{asymp_decay_lowbound}.

Similarly, making use of the second estimate in~\eqref{398r7gree},
for any~$x \geq x_0$,
\begin{equation*}
\begin{split}&
1- u^{(0)}(x) =  \int_{x}^{+\infty} (u^{(0)})\rq{}(y) \, dy \geq {\widehat{C}}\int_{x}^{+\infty}  y^{-\left( 1+\frac{2s(\gamma-\delta+1)}{\gamma-1}\right)}\, dy\\ &\qquad\qquad= \frac{(\gamma-1)}{2s\widehat{C}(\gamma-\delta+1)} |x |^{-\frac{2s(\gamma-\delta+1)}{\gamma-1}},
\end{split}
\end{equation*} which completes the proof of~\eqref{asymp_decay_lowbound}.
\end{proof}

\begin{proof}[Proof of statement~\eqref{iiiiunoiiii} of Lemma~\ref{psc0-500}]
For simplicity, we show that~\eqref{iiiiunoiiii} holds for~$u^{(0)}$
(as this implies the estimates for~$u^{(x_0)}$ for any~$x_0\in\R$). 

By~\eqref{pot_deg} and~\eqref{asymp_decay_lowbound}, there exists~$\widetilde{C}>0$ and~$x_0>0$ such that
\begin{equation}\label{dbctovaj}\begin{split}
W\rq{}\rq{}(u^{(0)}(x)) &\geq 
\begin{cases} C_1
( 1+u^{(0)}(x))^{\alpha-2} &\mbox{if }x\leq -x_0  , \\
C_3( 1-u^{(0)}(x))^{\gamma-2} &\mbox{if } x\geq x_0 
\end{cases}
\\&\geq
\begin{cases}
 \widetilde{C} |x |^{-\frac{2s(\alpha -2)(\alpha-\beta+1)}{\alpha-1}} &\mbox{if }x\leq -x_0 , \\
\widetilde{C}|x |^{-\frac{2s(\gamma-2)(\gamma-\delta-1)}{\gamma-1}} &\mbox{if } x\geq x_0 .
\end{cases}\end{split}
\end{equation}

Possibly taking~$x_0$ larger, thanks to~\eqref{ricdifar} we can assume that
\begin{equation}\label{xyxve}
\frac{\widetilde{C}}{2\Lambda} - \left( \frac{x_0^{-\frac{2s\left(1-(\alpha-2)(\alpha-\beta)\right)}{\alpha-1}}(\alpha-1)}{2s\left(1-(\alpha-2)(\alpha-\beta)\right)} +\frac{x_0^{-\frac{2s\left(1-(\gamma-2)(\gamma-\beta)\right)}{\gamma-1}}(\gamma-1)}{2s\left(1-(\gamma-2)(\gamma-\delta)\right)}\right)>0.
\end{equation}

Now, thanks to the regularity of~$u^{(0)}$ (recall~\eqref{propert45}), we can differentiate the equation
in~\eqref{miser45} and we see that, for all~$x\in\R$,
$$ {L}_K (u^{(0)})\rq{}(x) = W\rq{}\rq{}(u^{(0)}(x))  (u^{(0)})\rq{} (x).$$
Thus, exploiting~\eqref{dbctovaj} and recalling that~$u^{(0)}$
is strictly increasing, 
\begin{equation}\label{jgox}
{L}_K (u^{(0)})\rq{}(x) \geq \begin{cases}
 \widetilde{C}|x |^{-\frac{2s(\alpha -2)(\alpha-\beta+1)}{\alpha-1}} (u^{(0)})\rq{}(x) &\mbox{if } x \le -x_0 ,\\
 \widetilde{C} |x |^{-\frac{2s(\gamma-2)(\gamma-\delta-1)}{\gamma-1}} (u^{(0)})\rq{}(x) &\mbox{if } x \ge x_0.
\end{cases}
\end{equation}

We now take~$\phi \in C^{\infty}(\R)$ such that
\begin{equation}\label{vfurejmbheGFTYSTRDFKFGJ0987654}
\phi(x) =
\begin{cases}
|x |^{-\big(1+\frac{2s\left(1-(\alpha-2)(\alpha-\beta)\right)}{\alpha-1}\big)} &\mbox{if } x<-x_0,\\
|x|^{-\big(1+\frac{2s\left(1-(\gamma-2)(\gamma-\delta)\right)}{\gamma-1}\big)}&\mbox{if } x>x_0.
\end{cases}
\end{equation}
Also, we ask that~$\phi >0$ in~$[-x_0,x_0]$ and
\begin{equation}\label{pdbj}
\int_{-x_0}^{x_0}\phi(y) \, dy \leq \frac{\widetilde{C}}{2\Lambda} - \left( \frac{x_0^{-\frac{2s\left(1-(\alpha-2)(\alpha-\beta)\right)}{\alpha-1}}(\alpha-1)}{2s\left(1-(\alpha-2)(\alpha-\beta)\right)} +\frac{x_0^{-\frac{2s\left(1-(\gamma-2)(\gamma-\beta)\right)}{\gamma-1}}(\gamma-1)}{2s\left(1-(\gamma-2)(\gamma-\beta)\right)}\right)
\end{equation}
and we remark that this is possible thanks to~\eqref{xyxve}.

Now, thanks to~\eqref{ricdifar} we are allowed to exploit Proposition~\ref{tonto} to find that
\begin{equation*}
\lim_{|x | \to  +\infty} |x|^{1+2s} {L}_K\phi(x) \leq \Lambda \left( \frac{x_0^{-\frac{2s\left(1-(\alpha-2)(\alpha-\beta)\right)}{\alpha-1}}(\alpha-1)}{2s\left(1-(\alpha-2)(\alpha-\beta)\right)} + \int_{-x_0}^{x_0} \phi(y) \, dy
+\frac{x_0^{-\frac{2s\left(1-(\gamma-2)(\gamma-\beta)\right)}{\gamma-1}}(\gamma-1)}{2s\left(1-(\gamma-2)(\gamma-\beta)\right)}\right).
\end{equation*}
This and~\eqref{pdbj} imply that
\begin{equation*}
\lim_{|x | \to +\infty}| x |^{1+2s}{L}_K \phi (x) \leq \frac{\widetilde{C}}{2}.
\end{equation*}

We now point out that, if~$x\le -x_0$,
\begin{equation*}
\frac{\phi(x)}{|x |^{\frac{2s(\alpha -2)(\alpha-\beta+1)}{\alpha-1}}}=\frac{|x |^{-\big(1+\frac{2s\left(1-(\alpha-2)(\alpha-\beta)\right)}{\alpha-1}\big)}}{|x |^{\frac{2s(\alpha -2)(\alpha-\beta+1)}{\alpha-1}}}=\frac1{|x|^{1+2s}}
\end{equation*}
and similarly, if~$x\ge x_0$,
\begin{equation*}
\frac{\phi(x)}{|x |^{\frac{2s(\gamma -2)(\gamma-\delta+1)}{\gamma-1}}}=\frac{|x |^{-\big(1+\frac{2s\left(1-(\gamma-2)(\gamma-\delta)\right)}{\gamma-1}\big)}}{|x |^{\frac{2s(\gamma -2)(\gamma-\delta+1)}{\gamma-1}}}=\frac1{|x|^{1+2s}}.
\end{equation*}

As a consequence, we have that
there exists~$x_1\ge x_0$ such that
\begin{equation}\label{jddo}
{L}_K \phi(x) \leq \begin{cases}
\widetilde{C} |x|^{-\frac{2s(\alpha -2)(\alpha-\beta+1)}{\alpha-1}} \phi(x) &\mbox{if } x < -x_1,\\
\widetilde{C} | x|^{-\frac{2s(\gamma -2)(\gamma-\delta+1)}{\gamma-1}} \phi(x) &\mbox{if } x >x_1.
\end{cases}
\end{equation}

Now, we set 
\begin{equation*}
x_2:= \max\{ x_1,1\} \qquad\mbox{and}\qquad 
\widehat{C} :=4\Vert (u^{(0)})\rq{}\Vert_{L^{\infty}(\R)}  \left(\min_{x \in [-x_2,x_2]}\phi(x)\right)^{-1}
\end{equation*}
and we claim that
\begin{equation}\label{mvcjp}
(u^{(0)})\rq{}(x) <\widehat{C} \phi(x) \quad\mbox{for any } x \in \R.
\end{equation}
In order to prove the above inequality, we take~$b \in [0,+\infty)$ and we define~$v_b:= \widehat{C} \phi +b -(u^{(0)})\rq{}$.
We recall that, by~\eqref{propert45} and~\eqref{big}, $(u^{(0)})\rq{}$ is bounded and vanishes at infinity. Therefore, for any~$b>\Vert (u^{(0)})\rq{}\Vert_{L^{\infty}(\R)}$, we have that~$v_b >0$ in~$\R$.

Now, if~$v_b > 0$ in~$\R$ for any~$b \in [0,+\infty)$, then the claim in~\eqref{mvcjp} plainly follows taking~$b=0$. Hence, from now on, we suppose that there exists~$b_0\in(0,+\infty)$
such that~$v_b>0$ for all~$b\in(b_0,+\infty)$ and~$v_{b_0}(z)=0$ at some point~$z\in\R$.
 
By the definition of~$b_0$, there exist points~$x_k$ such that~$v_{b_0}(x_k)< 2^{-k}$. Without loss of generality, we may suppose that~$x_k\geq 0$ (otherwise,
in what follows, we use the information coming from the decay at~$-\infty$). Moreover, $x_k$ are bounded from above since, if not, we would have
\begin{equation*}
b_0 = \limsup_{k \to +\infty} \left( \widehat{C}\phi(x_k) +b_0 - (u^{(0)})\rq{}(x_k) \right) = \lim_{k \to +\infty} v_{b_0}(x_k) = 0,
\end{equation*}
which is a contradiction. 

Also, for~$k$ sufficiently large, we have that
\begin{equation*}
\Vert (u^{(0)})\rq{}\Vert_{L^{\infty}(\R)} \geq 2^{-k} >v_{b_0}(x_k)\geq \widehat{C} \phi(x_k) -(u^{(0)})\rq{}(x_k) \geq \widehat{C} \phi(x_k) - \Vert (u^{(0)})\rq{}\Vert_{L^{\infty}(\R)}.
\end{equation*}
Accordingly, recalling the definition of~$\widehat{C}$,
\[\phi(x_k) \leq\frac{\min_{x \in [-x_2,x_2]}\phi(x)}{2}.\] 
This implies that~$x_k>x_2$.

Gathering these pieces of information, we conclude that
there exists~$x_{\infty} \in [ x_2, +\infty)$ such that~$x_k \to x_{\infty}$, up to a subsequence, as~$k\to+\infty$. 

In addition, thanks to the continuity of~$v_{b_0}$,
\begin{equation}\label{zeroyes}
v_{b_0}(x_{\infty})=0.
\end{equation}

Now, by means of~\eqref{jgox} and~\eqref{jddo},
\begin{equation*}
\begin{split}
{L}_K v_{b_0}(x_k) &= \widehat{C}{L}_K \phi(x_k) -  {L}_K (u^{(0)})\rq{} (x_k) \leq  \widetilde{C} x_k^{-\frac{2s(\gamma -2)(\gamma-\delta+1)}{\gamma-1}} \big(\widehat{C}\phi(x_k) - (u^{(0)})\rq{}(x_k) \big) \\
&= \widetilde{C} x_k^{-\frac{2s(\gamma -2)(\gamma-\delta+1)}{\gamma-1}}v_{b_0}(x_k) -  \widetilde{C}  x_k^{-\frac{2s(\gamma -2)(\gamma-\delta+1)}{\gamma-1}} b_0 \\
&\leq C( 2^{-k}- b_0),
\end{split}
\end{equation*}
for some~$C>0$. 

The regularity of~$v_{b_0}$ then implies that
\begin{equation*}
{L}_K v_{b_0}(x_{\infty}) = \lim_{k \to +\infty} {L}_K v_{b_0}(x_k) \leq -C b_0 <0.
\end{equation*}
On the other hand, by~\eqref{zeroyes},
\begin{equation*}
{L}_K v_{b_0}(x_{\infty}) = \int_{\R} (v_{b_0}(y)-v_{b_0}(x_{\infty})) K(x_{\infty}-y) \, dy 
= \int_{\R} v_{b_0}(y) K(x_{\infty}-y) \, dy \geq 0.
\end{equation*}
We thereby obtain the desired contradiction,
and therefore~\eqref{mvcjp} is established.

The desired estimates now follow from~\eqref{vfurejmbheGFTYSTRDFKFGJ0987654}
and~\eqref{mvcjp}.
\end{proof}

\section{Proofs of Theorems~\ref{main_thm} and~\ref{main_thm_symm}}\label{main_thm_proof}

We are now ready to deal with the~$1$-D minimizers of~$\Ec$ and provide the proofs of the main results of this paper.

We recall the definitions of~$\mathscr{M}$ and of~$\mathscr{M}^{(x_0)}$, respectively in~\eqref{emme} and~\eqref{emmezero}.
    
     \subsection*{Proof of Theorem~\ref{main_thm}} 
For any~$R>3$, we use Lemma~\ref{min_open_inter} with~$a:=-R$ and~$b:=R$ to obtain a minimizer~$v_{[-R,R]}: \R \to [-1,1]$ such that~$v_{[-R,R]}(x) = -1$ if~$x\leq-R$ and~$v_{[-R,R]}(x)=1$ if~$x \geq R$. Also, the energy bound~\eqref{bound_energy} holds for~$v_{[-R,R]}$ and, in view of Proposition~\ref{eupro}, $v_{[-R,R]}$ is non-decreasing.

The minimization property of~$v_{[-R,R]}$ yields that
\begin{equation*}
{L}_K v_{[-R,R]} = W\rq{}(v_{[-R,R]}) \quad\mbox{in }  (-R,R).
\end{equation*}
Therefore, by Proposition~\ref{reg_dirichlet_pbm}, we have that~$v_{[-R,R]} \in C^{\theta}(\R) \cap C^{2s+\theta}(-R,R)$, for some~$\theta \in (0,s)$, and its  H\"{o}lder norm is bounded independently of~$R$.

Now, by continuity, there must be a point~$\bar{x}_R\in\R$ such that~$v_{[-R,R]}(\bar{x}_R)=0$. We claim that
\begin{equation}\label{evedser}
\lim_{R \to +\infty}( R- |\bar{x}_R|)= +\infty.
\end{equation}
To check this, we argue by contradiction and we suppose that
\begin{equation}\label{eitor}
\text{either}\quad \lim_{R \to +\infty} ( R+ \bar{x}_R) \leq C \quad\mbox{or}\quad \lim_{R \to +\infty} ( R - \bar{x}_R) \leq C,
\end{equation}
for some~$C>0$. 

We suppose that
the first case in~\eqref{eitor} occurs (the other being analogous).
In this case, we consider the minimizer~$v_{[-2R,0]}$ and notice that, in view of the translation invariance, $v_{[-2R,0]}(x)=v_{[-R,R]}(x-R)$. Then, taking~$C$ as in~\eqref{eitor} and exploiting the monotonicity of~$v_{[-2R,0]}$, we see that, for~$R$ large enough,
\begin{equation*}
v_{[-2R,0]}(C)= v_{[-R,R]}(C - R) \geq v_{[-R,R]}(\bar{x}_R) =0 . 
\end{equation*}
This inequality gives that
\begin{equation*}
\lim_{R \to +\infty} v_{[-2R,0]}(C) \geq 0,
\end{equation*}
which is in contradiction with Proposition~\ref{comepo} and Remark~\ref{remm}. The proof of~\eqref{evedser} is thereby complete.

Now, we set
\begin{equation*}
v^{(0)}_R(x) :=  v_{[-R,R]}(x+\bar{x}_R).
\end{equation*}
In this way, $v^{(0)}_R(0)=0$. 

Moreover, $v^{(0)}_R$ is a local minimizer of~$\Ec$ in~$[-R-\bar{x}_R,R-\bar{x}_R] $ and, thanks to~\eqref{evedser},
\begin{equation*}
\lim_{R \to +\infty} (-R-\bar{x}_R) = - \infty \qquad\mbox{and}\qquad \lim_{R \to +\infty} (R-\bar{x}_R) = +\infty.
\end{equation*} 
Consequently, we may suppose that~$v^{(0)}_R$ converges locally uniformly in~$\R$ to some~$u^{(0)}:\R \to [-1,1]$ such that~$u^{(0)}\in C_{{\rm loc}}^{2s+\theta}(\R) \cap L^{\infty}(\R)$. Also,
we obtain the pointwise equality
\begin{equation}\label{miser}
{L}_K u^{(0)}= W\rq{}(u^{(0)}) \quad\mbox{in }\R.
\end{equation}
Furthermore, we have that
\begin{equation}\label{propert}
{\mbox{$u^{(0)}\in C^{1+2s+\theta}(\R)\cap L^{\infty}(\R)$ for some~$\theta\in(0,1)$, $u^{(0)}$ is non-decreasing
and~$u^{(0)}(0)=0$,}}
\end{equation}
where we deduce the first property from Proposition~\ref{reg_entire_sol}. 

Now, we prove that
\begin{equation}\label{ppdd}
\mathscr{G}(u^{(0)}) <+\infty.
\end{equation}
In order to show this, we fix~$\rho>0$ and take~$R$ large enough
such that~$R-\bar{x}_R>\rho$. Then, by the Fatou\rq{}s Lemma and Proposition~\ref{mdss}, we obtain, for some~$C>0$,
\begin{equation*}
\Ec(u^{(0)}, [-\rho,\rho]) \leq \liminf_{R \to +\infty} \Ec(v^{(0)}_R, [-\rho,\rho]) \leq C\Psi_s(\rho),
\end{equation*}
thus showing~\eqref{ppdd}. 

Furthermore,
\begin{equation}\label{lim_doub}
\lim_{x \to \pm \infty} u^{(0)}(x)= \pm1.
\end{equation}
Indeed, we first notice that, by~\eqref{propert}, the limits in~\eqref{lim_doub} exist. Now, we take~$a_{-}$,
$a_{+} \in [-1,1]$ such that 
\begin{equation*}
\lim_{x \to -\infty}u^{(0)}(x) = a_{-} \qquad\mbox{and}\qquad \lim_{x \to +\infty}u^{(0)}(x) = a_{+}.
\end{equation*}
By~\eqref{propert}, we know that~$a_{+} \in [0,1]$. Moreover, if~$a_{+}\neq 1$, then~\eqref{pot_zero} gives that
\begin{equation*}
C_W :=\inf_{x \in [0,a_{+}]}W(x)>0.
\end{equation*}
As a consequence, for any~$\rho>0$,
\begin{equation*}
\Ec (u^{(0)}, [-\rho,\rho]) \geq \int_{0}^{\rho}W(u^{(0)}(x)) \, dx \geq C_W \rho,
\end{equation*}
this contradicting~\eqref{ppdd} if~$\rho$ is sufficiently large.
Therefore~$a_{+}=1$, and similarly one can check that~$a_{-}=-1$. This completes the proof of~\eqref{lim_doub}. 

Now, thanks to~\eqref{propert}, \eqref{lim_doub} and~\cite[Remark 2.7]{CP16}, we exploit~\cite[Theorem 3]{CP16} to infer that~$u^{(0)}$ is a class~A minimizer of~$\Ec$.

Consequently, $u^{(0)} \in \mathscr{M}^{(0)}$ and we can apply Lemma~\ref{psc0-500} to obtain that~$u^{(0)}$ satisfies the decay estimates~\eqref{asymp_decay},~\eqref{asymp_decay_lowbound},~\eqref{398r7gree} and~\eqref{eq:asymp-derivata}. Moreover, Lemma~\ref{psc0-500} also gives that~$u^{(0)}$ is, up to translations, the unique class~A minimizer of~$\Ec$.

Finally, it holds that, up to translations, $u^{(0)}$ is the only non-decreasing solution to~\eqref{miser} in the family of admissible functions~$\mathcal{X}$. Indeed, let us consider~$v \in \mathcal{X}$ to be a non-decreasing solution to~\eqref{miser}.  Then, by Lemma~\ref{lemma:degiorgi}, $v$ is a class~A minimizer of~$\Ec$ and thus~$v \in \mathscr{M}^{(x_0)}$ for some~$x_0$. Hence, \eqref{iiunoii} gives that~$v(x)= u^{(0)}(x-x_0)$.

\subsection*{Proof of Theorem~\ref{main_thm_symm}}
For any~$R>3$, we use Lemma~\ref{min_open_inter} with~$a:=-R$ and~$b:=R$ to obtain a minimizer~$v_{[-R,R]}: \R \to [-1,1]$ such that~$v_{[-R,R]}(x) = -1$ if~$x\leq-R$ and~$v_{[-R,R]}(x)=1$ if~$x \geq R$. Also, the energy bound~\eqref{bound_energy} holds for~$v_{[-R,R]}$ and, in view of Proposition~\ref{eupro}, $v_{[-R,R]}$ is non-decreasing. Furthermore, by Proposition~\ref{symm}, ~$v_{[-R,R]}$ is odd and so~$v_{[-R,R]}(0)=0$ for any~$R$.

The minimization property of~$v_{[-R,R]}$ implies that
\begin{equation*}
{L}_K v_{[-R,R]} = W\rq{}(v_{[-R,R]}) \quad\mbox{in }  (-R,R).
\end{equation*}
Then, by Proposition~\ref{reg_dirichlet_pbm}, $v_{[-R,R]} \in C^{\theta}(\R) \cap C^{2s+\theta}(-R,R)$ for some~$\theta \in (0,s)$, and its  H\"{o}lder norm is bounded independently of~$R$. 

Consequently, we may suppose that~$v_{[-R,R]}$ converges locally uniformly in~$\R$ to some~$u^{(0)}:\R \to [-1,1]$ such that~$u^{(0)}\in C_{{\rm loc}}^{2s+\theta}(\R) \cap L^{\infty}(\R)$. Also,
we obtain the pointwise equality
\begin{equation}\label{miser_symm}
{L}_K u^{(0)}= W\rq{}(u^{(0)}) \quad\mbox{in }\R.
\end{equation}
Furthermore, we have that
\begin{equation}\label{propert_symm}
{\mbox{$u^{(0)}\in C^{1+2s+\theta}(\R)\cap L^{\infty}(\R)$
for some~$\theta\in(0,1)$, $u^{(0)}$ is non-decreasing and odd,}}
\end{equation}
where we deduce the first property from Proposition~\ref{reg_entire_sol} and we stress that~$u^{(0)}(0)=0$ by symmetry. 

Now, we prove that
\begin{equation}\label{ppdd_symm}
\mathscr{G}(u^{(0)}) <+\infty.
\end{equation}
For this, we fix~$\rho>0$ and take~$R>\rho$. Then, by the Fatou\rq{}s Lemma and Proposition~\ref{mdss}, we obtain that
\begin{equation*}
\Ec(u^{(0)}, [-\rho,\rho]) \leq \liminf_{R \to +\infty} \Ec(v_{[-R,R]}, [-\rho,\rho]) \leq C\Psi_s(\rho),
\end{equation*}
which gives~\ref{ppdd_symm}. 

Furthermore,
\begin{equation}\label{lim_doub_symm}
\lim_{x \to \pm \infty} u^{(0)}(x)= \pm1.
\end{equation}
Indeed, we first notice that by~\eqref{propert_symm} the limits in~\eqref{lim_doub_symm} exist. Now, we take~$a_{-}$, $a_{+} \in [-1,1]$ such that 
\begin{equation*}
\lim_{x \to -\infty}u^{(0)}(x) = a_{-} \qquad\mbox{and}\qquad \lim_{x \to +\infty}u^{(0)}(x) = a_{+}.
\end{equation*}
By~\eqref{propert_symm}, we know that~$a_{+} \in [0,1]$. Moreover, if~$a_{+}\neq 1$, then~\eqref{pot_zero} gives that
\begin{equation*}
C_W :=\inf_{x \in [0,a_{+}]}W(x)>0.
\end{equation*}
As a consequence, for any~$\rho>0$ we compute
\begin{equation*}
\begin{split}
\Ec (u^{(0)}, [-\rho,\rho]) &\geq \int_{0}^{\rho}W(u^{(0)}(x)) \, dx \geq C_W \rho,
\end{split}
\end{equation*}
which contradicts~\eqref{ppdd_symm} when~$\rho$ is large enough.
Accordingly, we have that~$a_{+}=1$, and similarly one can show that~$a_{-}=-1$ and complete the proof of~\eqref{lim_doub_symm}.

Finally, thanks to~\eqref{propert_symm}, \eqref{lim_doub_symm} and~\cite[Remark 2.7]{CP16}, we can use~\cite[Theorem 3]{CP16} to infer that~$u^{(0)}$ is a class~A minimizer of~$\Ec$. Consequently, $u^{(0)} \in \mathscr{M}^{(0)}$ and we can apply Lemma~\ref{psc0-500} to obtain that~$u^{(0)}$ satisfies the decay estimates~\eqref{asymp_decay_symm},~\eqref{asymp_decay_symm_lowbound},~\eqref{97e382d} and~\eqref{98fgvrjfbgvvfoi88}. By Lemma~\ref{psc0-500} we also have that~$u^{(0)}$ is, up to translations, the unique class~A minimizer of~$\Ec$.

Finally, it holds that, up to translations, $u^{(0)}$ is the only non-decreasing solution to~\eqref{miser_symm} in the family of admissible functions~$\mathcal{X}$. Indeed, let us consider~$v \in \mathcal{X}$ to be a non-decreasing solution to~\eqref{miser_symm}.  Then, by Lemma~\ref{lemma:degiorgi}, $v$ is a class~A minimizer of~$\Ec$ and thus~$v \in \mathscr{M}^{(x_0)}$ for some~$x_0$. Hence, \eqref{iiunoii} gives that~$v(x)= u^{(0)}(x-x_0)$.

     \appendix

 \section{On the kernel~$K$}\label{kern_ex}
 In this section, we provide three explicit examples of \lq\lq{}admissible\rq\rq{} kernels~$K$, where we define \lq\lq{}admissible\rq\rq{} any kernel~$K:\R^n \to [0,+\infty]$ satisfying~\eqref{krn_symm}, \eqref{main_ellipt}  and~\eqref{nuovissima}. 
 
\begin{example}{\rm{
The kernel of the fractional Laplacian
$$
K(x) := | x |^{-( n+2s)}
$$
is admissible. 
Indeed, it obviously satisfies~\eqref{krn_symm} and~\eqref{main_ellipt}.

We now check that it also satisfies~\eqref{nuovissima}. 
For this, let~$\sigma_j\nearrow1$ as~$j \to +\infty$ and~$\epsilon \in (0,1)$. Then,
\begin{eqnarray*}
\left(\sup_{x \in \R^n \setminus\{0\}} \frac{K(\sigma_j x)}{K(x)} -1\right) \frac{1}{(1-\sigma_j)^{1-\epsilon}} &
= & \frac{\sigma_j^{-(n+2s)}-1}{(1-\sigma_j)^{1-\epsilon}} \\ 
&=& \frac{(n+2s)(1-\sigma_j)+O(| 1 -\sigma_j|^2)}{(1-\sigma_j)^{1-\epsilon}}.
\end{eqnarray*}
Taking the limit as~$j\to+\infty$, we thereby see that~\eqref{nuovissima} holds true.
}}
\end{example}

We mention here that condition~\eqref{main_ellipt} is very general and allows us to consider a great variety of translation invariant kernels, only locally comparable to that of the fractional Laplacian. For instance, a kernel satisfying~\eqref{main_ellipt} is
\begin{equation}\label{hppppf}
K(x) := \mathds{1}_{B_{r_0}}(x) \frac{a(x)}{| x |^{n+2s}},
\end{equation}
with~$r_0>0$ and~$a$ bounded and bounded away from zero.
Kernels of this form
have been widely considered in the literature (see for instance~\cite{KKL16} and the references therein).

Nevertheless, a kernel as in~\eqref{hppppf} does not satisfy~\eqref{nuovissima}. To show this, we consider~$\sigma_j \nearrow 1$ such that~$\sigma_j r_0 < r_0 < r_0/\sigma_j$. Also, we pick~$\bar{x} \in \partial B_{r_0/\sqrt{\sigma_j}}$ and consider a sequence of points~$x_k \in  B_{r_0/\sqrt{\sigma_j}}$ converging to~$\bar{x}$. In particular, ${x_k \in B_{r_0/\sqrt{\sigma_j}} \setminus B_{r_0}}$ for any~$k$ large enough, while~$\sigma_j x_k \in B_{\sqrt{\sigma_j}r_0} \Subset B_{r_0}$. Then, we see that
\[
\sup_{x \in \R^n} \frac{K(\sigma_j x)}{K(x)} \geq \frac{\displaystyle\inf_{x\in \R^n} a(x)} {\displaystyle\sup_{x\in\R^n}a(x)}\sigma_j^{-(n+2s)} \lim_{k \to +\infty} \frac{ \mathds{1}_{B_{r_0}}(\sigma_j x_k)}{\mathds{1}_{B_{r_0}}(x_k)} = +\infty,
\]
which entails that~\eqref{nuovissima} fails to hold.

\begin{example}{\rm
An admissible kernel that exploits the generality of~\eqref{main_ellipt} is the following:
\begin{equation}\label{khvfvvcd}
K(x): = \begin{cases}
| x |^{-(n+2s)} & \mbox{if } | x | < \rho, \\
2 \rho^{(\theta-1)(n+2s)}| x |^{-\theta(n+2s)} & \mbox{if } | x| \geq \rho
\end{cases}
\end{equation}
for some~$\theta>1$ and~$\rho>0$. This kernel satisfies~\eqref{krn_symm} and~\eqref{main_ellipt}. 

In order to show~\eqref{nuovissima}, we can reason as follows. Let~$\sigma_j \nearrow 1$ and~$\epsilon \in (0,1)$. 
We claim that, for any~$x\in\R^n$,
\begin{equation}\label{guefiowdj5434ur7hufndcssfbry54t854yi}
\frac{K(\sigma_j x)}{K(x)}\le \sigma_j^{-\theta(n+2s)}.
\end{equation}
Indeed, if~$ x  \in{B_{\rho}}$, it holds that
\begin{equation*}
\frac{K(\sigma_j x)}{K(x)}= \sigma_j^{-(n+2s)} \leq \sigma_j^{-\theta(n+2s)}.
\end{equation*}
Moreover, if~$ x \in B_{\rho /\sigma_j} \setminus {B_{\rho}}$, we have that
\begin{equation*}
\begin{split}&
\frac{K(\sigma_j x)}{K(x)}  = \frac{\sigma_j^{-(n+2s)}| x |^{-(n+2s)}}{2 \rho^{(\theta-1)(n+2s)}| x|^{-\theta(n+2s)}} = \frac{\sigma_j^{-(n+2s)}}{2} \left(\frac{| x |}{\rho}\right)^{(\theta-1)(n+2s)} \\
&\qquad\qquad \leq \frac{\sigma_j^{-\theta(n+2s)}}{2} \leq\sigma_j^{-\theta(n+2s)}.
\end{split}
\end{equation*}
Furthermore, it is immediate to check that, for any~$ x \in \R^n \setminus B_{\rho /\sigma_j}$ it holds
\[\frac{K(\sigma_j x)}{K(x)}  =\sigma_j^{-\theta(n+2s)}.\]
Gathering these pieces of information, we obtain~\eqref{guefiowdj5434ur7hufndcssfbry54t854yi}.

{F}rom~\eqref{guefiowdj5434ur7hufndcssfbry54t854yi} we thus conclude that
\[
\begin{split}
\left(\sup_{x \in \R^n \setminus\{0\}} \frac{K(\sigma_j x)}{K(x)} -1\right) \frac{1}{(1-\sigma_j)^{1-\epsilon}}&
\leq \frac{\sigma_j^{-\theta(n+2s)}-1}{(1-\sigma_j)^{1-\epsilon}} \\&= \frac{\theta(n+2s)(1-\sigma_j)+O(| 1 -\sigma_j|^2)}{(1-\sigma_j)^{1-\epsilon}}.
\end{split}
\]
Passing to the limit as~$j \to +\infty$ shows~\eqref{nuovissima}.
}
\end{example}

\begin{example}{\rm
Given~$\tau \in \R$ and~$\zeta \geq1$, we define the function
\[ g(x):=e^{-| x|^2}\cos(\tau | x| )+\zeta. \]
Also, taking~$K$ as in~\eqref{khvfvvcd}, we define the kernel
$$
\widetilde{K}(x):= C K(x) g(x),
$$
for some~$C>0$.

The kernel~$\widetilde{K}$ is admissible.
Indeed, it clearly satisfies~\eqref{krn_symm} and~\eqref{main_ellipt}, thus we only focus on showing~\eqref{nuovissima}. To this aim, let~$\sigma_j \nearrow 1$ and~$\epsilon \in (0,1)$. In light of~\eqref{guefiowdj5434ur7hufndcssfbry54t854yi}, we have see that,
for any~$x \in \R^n \setminus \{0\}$,
\begin{equation}\label{0j9h0h121}
\begin{split}
\frac{\widetilde{K}(\sigma_jx)}{\widetilde{K}(x)}-1 &\leq \frac{\sigma_j^{-\theta(n+2s)} g(\sigma_jx)-g(x)}{g(x)}\\&=  \frac{(\sigma_j^{-\theta(n+2s)} -1) g(\sigma_jx)}{g(x)}+\frac{g(\sigma_jx)-g(x)}{g(x)} \\
&\leq \frac{\displaystyle\sup_{x \in\R^n} g(x)}{\displaystyle\inf_{x\in\R^n} g(x)} \big(\sigma_j^{-\theta(n+2s)} -1\big) + \frac{|g(\sigma_jx)-g(x)|}{\displaystyle \inf_{x\in\R^n} g(x)}.
\end{split}
\end{equation}

Now, we claim that, for any~$x \in \R^n$,
\begin{equation}\label{845t9y79egriev}
|g(\sigma_jx)-g(x) |\leq C (1-\sigma_j),
\end{equation}
for some~$C>0$.

In order to show~\eqref{845t9y79egriev}, we observe that,
if~$j$ is sufficiently large,
\begin{equation}\label{bvfjf8y7776}
\begin{split}
|g(\sigma_jx)-g(x)|&=\left| e^{-\sigma_j^2 | x |^2}\cos(\tau \sigma_j | x |)- e^{-| x|^2}\cos(\tau | x| )\right| \\
&\le\left|  e^{-\sigma_j^2 | x |^2} \left( \cos(\tau \sigma_j | x |) -\cos(\tau  | x |)  \right) \right|+ \left|\cos(\tau | x |) \left(  e^{-\sigma_j^2 | x |^2} -e^{-| x|^2}\right)\right| \\
&\leq  \tau e^{- | x |^2 /2} | x |  (1-\sigma_j) + \left|  e^{-\sigma_j^2 | x |^2} -e^{-|x|^2}\right| \\
&\leq C (1-\sigma_j)+e^{-\sigma_j^2 | x|^2} -e^{-| x|^2},
\end{split}
\end{equation}
for some~$C>0$.

Now, we define the function~$f(x):= e^{-| x |^2} | x |^2$ and we point out that, if~$t \in \R \setminus \{0\}$,
$$
e^{-t^2 | x |^2}t= \frac{f(tx)}{t | x|^2} \leq \frac{\| f \|_{L^{\infty}(\R^n)}}{t | x |^2}.
$$
Thus, exploiting the Fundamental Theorem of Calculus and a Taylor expansion of the logarithm near~$1$, we get that
\begin{equation*}
\begin{split}&
 e^{-\sigma_j^2 | x |^2} -e^{-| x|^2} = \int_1^{\sigma_j} \frac{d}{dt}e^{-t^2 |x|^2} \, dt  = 2 |x|^2 \int_{\sigma_j}^1 e^{-t^2 |x|^2}t \, dt \leq 2 \|f\|_{L^{\infty}(\R^n)} \int_{\sigma_j}^1 \frac{dt}{t} \\
&\qquad\qquad= 2 \|f\|_{L^{\infty}(\R^n)} \ln(\sigma_j^{-1}) \leq C (1-\sigma_j),
\end{split}
\end{equation*}
for some~$C>0$. Combining this with~\eqref{bvfjf8y7776} yields~\eqref{845t9y79egriev}.

As a consequence of~\eqref{0j9h0h121} and~\eqref{845t9y79egriev}, we obtain that
\begin{equation*}
\begin{split}
\sup_{x \in \R^n \setminus \{0\}} \frac{\widetilde{K}(\sigma_jx)}{\widetilde{K}(x)}-1 &\leq \frac{\displaystyle\sup_{x \in\R^n} g(x)}{\displaystyle\inf_{x\in\R^n} g(x)} \big(\sigma_j^{-\theta(n+2s)} -1\big) + \frac{C (1-\sigma_j)}{\displaystyle\inf_{x\in\R^n} g(x)} \leq C (1-\sigma_j),
\end{split}
\end{equation*}
up to relabeling~$C$. 

This entails that~\eqref{nuovissima} holds true, as desired.}
\end{example}

  \begin{bibdiv}
\begin{biblist}   

\bib{BB98}{article} {
    AUTHOR = {Alberti, G.},
    AUTHOR = {Bellettini, G.},
     TITLE = {A non-local anisotropic model for phase transitions:
              asymptotic behaviour of rescaled energies},
   JOURNAL = {European J. Appl. Math.},
  FJOURNAL = {European Journal of Applied Mathematics},
    VOLUME = {9},
      YEAR = {1998},
    NUMBER = {3},
     PAGES = {261--284},
      ISSN = {0956-7925,1469-4425},
   MRCLASS = {80A22 (49S05 73B40 73V25 82B26)},
  MRNUMBER = {1634336},
MRREVIEWER = {Matthias\ Wilhelm\ Winter},
       DOI = {10.1017/S0956792598003453},
       URL = {https://doi.org/10.1017/S0956792598003453},
}

\bib{BBB98}{article} {
    AUTHOR = {Alberti, G.},
    AUTHOR = {Bellettini, G.},
     TITLE = {A nonlocal anisotropic model for phase transitions. {I}. {T}he
              optimal profile problem},
   JOURNAL = {Math. Ann.},
  FJOURNAL = {Mathematische Annalen},
    VOLUME = {310},
      YEAR = {1998},
    NUMBER = {3},
     PAGES = {527--560},
      ISSN = {0025-5831,1432-1807},
   MRCLASS = {82B24 (47J30 49J45 74N99)},
  MRNUMBER = {1612250},
MRREVIEWER = {Francis\ Comets},
       DOI = {10.1007/s002080050159},
       URL = {https://doi.org/10.1007/s002080050159},
}

\bib{AB94}{article} {
    AUTHOR = {Alberti, G.},
    AUTHOR = {Bouchitt\'e, G.},
    AUTHOR={Seppecher, P.},
     TITLE = {Un r\'esultat de perturbations singuli\`eres avec la norme
              {$H^{1/2}$}},
   JOURNAL = {C. R. Acad. Sci. Paris S\'er. I Math.},
  FJOURNAL = {Comptes Rendus de l'Acad\'emie des Sciences. S\'erie I.
              Math\'ematique},
    VOLUME = {319},
      YEAR = {1994},
    NUMBER = {4},
     PAGES = {333--338},
      ISSN = {0764-4442},
   MRCLASS = {49J45 (35B25)},
  MRNUMBER = {1289307},
}

\bib{BV16}{book} {
    AUTHOR = {Bucur, C.},
    AUTHOR= { Valdinoci, E.},
     TITLE = {Nonlocal diffusion and applications},
    SERIES = {Lecture Notes of the Unione Matematica Italiana},
    VOLUME = {20},
 PUBLISHER = {Springer, Cham; Unione Matematica Italiana, Bologna},
      YEAR = {2016},
     PAGES = {xii+155},
      ISBN = {978-3-319-28738-6; 978-3-319-28739-3},
   MRCLASS = {35R11 (26A33 60J75 76R50)},
  MRNUMBER = {3469920},
MRREVIEWER = {\L ukasz\ P\l ociniczak},
       DOI = {10.1007/978-3-319-28739-3},
       URL = {https://doi.org/10.1007/978-3-319-28739-3},
}

\bib{CS15}{article}{
    AUTHOR = {Cabr\'e, X.},
    AUTHOR={Sire, Y.},
     TITLE = {Nonlinear equations for fractional {L}aplacians {II}:
              {E}xistence, uniqueness, and qualitative properties of
              solutions},
   JOURNAL = {Trans. Amer. Math. Soc.},
  FJOURNAL = {Transactions of the American Mathematical Society},
    VOLUME = {367},
      YEAR = {2015},
    NUMBER = {2},
     PAGES = {911--941},
      ISSN = {0002-9947,1088-6850},
   MRCLASS = {35R11 (35B50 35B65)},
  MRNUMBER = {3280032},
MRREVIEWER = {\L ukasz\ P\l ociniczak},
       DOI = {10.1090/S0002-9947-2014-05906-0},
       URL = {https://doi.org/10.1090/S0002-9947-2014-05906-0},
}


\bib{CS09}{article}{
    AUTHOR = {Caffarelli, L.},
    AUTHOR={Silvestre, L.},
     TITLE = {Regularity theory for fully nonlinear integro-differential
              equations},
   JOURNAL = {Comm. Pure Appl. Math.},
  FJOURNAL = {Communications on Pure and Applied Mathematics},
    VOLUME = {62},
      YEAR = {2009},
    NUMBER = {5},
     PAGES = {597--638},
      ISSN = {0010-3640,1097-0312},
   MRCLASS = {35R09 (34B30 35B65 35J60 41A17 45J05 60J75 93E20)},
  MRNUMBER = {2494809},
MRREVIEWER = {Fabiana\ Leoni},
       DOI = {10.1002/cpa.20274},
       URL = {https://doi.org/10.1002/cpa.20274},
}

\bib{CS11}{article}{
    AUTHOR = {Caffarelli, L.},
    AUTHOR={Silvestre, L.},
     TITLE = {Regularity results for nonlocal equations by approximation},
   JOURNAL = {Arch. Ration. Mech. Anal.},
  FJOURNAL = {Archive for Rational Mechanics and Analysis},
    VOLUME = {200},
      YEAR = {2011},
    NUMBER = {1},
     PAGES = {59--88},
      ISSN = {0003-9527,1432-0673},
   MRCLASS = {35R09 (35B65 35D40 35R11 47G20)},
  MRNUMBER = {2781586},
MRREVIEWER = {Michael\ Bildhauer},
       DOI = {10.1007/s00205-010-0336-4},
       URL = {https://doi.org/10.1007/s00205-010-0336-4},
}

\bib{cozzibarrier}{book}{
AUTHOR={Cozzi, M.},
TITLE={Qualitative Properties of Solutions of
Nonlinear Anisotropic PDEs in Local and
Nonlocal Settings},
 PUBLISHER = {PhD thesis, University of Milan},
      YEAR = {2016},
}

\bib{CP16}{article}{
    AUTHOR = {Cozzi, M.},
    AUTHOR={Passalacqua, T.},
     TITLE = {One-dimensional solutions of non-local {A}llen-{C}ahn-type
              equations with rough kernels},
   JOURNAL = {J. Differential Equations},
  FJOURNAL = {Journal of Differential Equations},
    VOLUME = {260},
      YEAR = {2016},
    NUMBER = {8},
     PAGES = {6638--6696},
      ISSN = {0022-0396,1090-2732},
   MRCLASS = {47G10 (35B08 35R11 47B34)},
  MRNUMBER = {3460227},
MRREVIEWER = {Nino\ Manjavidze},
       DOI = {10.1016/j.jde.2016.01.006},
       URL = {https://doi.org/10.1016/j.jde.2016.01.006},
}

\bib{CozziValdNONLINEARITY}{article}{
    AUTHOR = {Cozzi, M.},
    AUTHOR={Valdinoci, E.},
     TITLE = {Planelike minimizers of nonlocal {G}inzburg-{L}andau energies
              and fractional perimeters in periodic media},
   JOURNAL = {Nonlinearity},
  FJOURNAL = {Nonlinearity},
    VOLUME = {31},
      YEAR = {2018},
    NUMBER = {7},
     PAGES = {3013--3056},
      ISSN = {0951-7715,1361-6544},
   MRCLASS = {35R11 (35Q56 82B26)},
  MRNUMBER = {3816747},
MRREVIEWER = {Yana\ Kinderknecht},
       DOI = {10.1088/1361-6544/aab89d},
       URL = {https://doi.org/10.1088/1361-6544/aab89d},
}

\bib{DT24}{article}{
AUTHOR= {Davoli, E.},
AUTHOR= {Tasso, E.},
TITLE={Non-local non-homogeneous phase transitions: regularity of optimal profiles and sharp-interface limit}, 
JOURNAL={Preprint,
\url{https://cvgmt.sns.it/paper/6899/}},
}

\bib{giovanni}{article}{
AUTHOR= {Dipierro, S.},
AUTHOR= {Farina, A.},
AUTHOR= {Giacomin, G.},
AUTHOR={Valdinoci, E.},
TITLE={Density estimates for a nonlocal variational model with a degenerate double-well potential via the Sobolev inequality}, 
JOURNAL={Preprint, \url{https://arxiv.org/abs/2502.13400}},
doi={10.48550/arXiv.2502.13400},
}

\bib{farina}{article}{
    AUTHOR = {Dipierro, S.},
    AUTHOR={Farina, A.},
    AUTHOR={Valdinoci, E.},
     TITLE = {Density estimates for degenerate double-well potentials},
   JOURNAL = {SIAM J. Math. Anal.},
  FJOURNAL = {SIAM Journal on Mathematical Analysis},
    VOLUME = {50},
      YEAR = {2018},
    NUMBER = {6},
     PAGES = {6333--6347},
      ISSN = {0036-1410,1095-7154},
   MRCLASS = {35J61 (35J20 35J62 35J70 35J75 82B26)},
  MRNUMBER = {3890785},
MRREVIEWER = {Tatsuya\ Watanabe},
       DOI = {10.1137/17M114933X},
       URL = {https://doi.org/10.1137/17M114933X},
}

\bib{DFV14}{article}{
    AUTHOR = {Dipierro, S.},
    AUTHOR = {Figalli, A.},
    AUTHOR = {Valdinoci, E.},
     TITLE = {Strongly nonlocal dislocation dynamics in crystals},
   JOURNAL = {Comm. Partial Differential Equations},
  FJOURNAL = {Communications in Partial Differential Equations},
    VOLUME = {39},
      YEAR = {2014},
    NUMBER = {12},
     PAGES = {2351--2387},
      ISSN = {0360-5302,1532-4133},
   MRCLASS = {35R11 (35B05 35B10 35K15 35R09)},
  MRNUMBER = {3259559},
MRREVIEWER = {Erc\'ilia\ Sousa},
       DOI = {10.1080/03605302.2014.914536},
       URL = {https://doi.org/10.1080/03605302.2014.914536},
}

\bib{DPV15}{article}{
    AUTHOR = {Dipierro, S.},
      AUTHOR = {Palatucci, G.},
      AUTHOR = ={Valdinoci, E.},        
     TITLE = {Dislocation dynamics in crystals: a macroscopic theory in a
              fractional {L}aplace setting},
   JOURNAL = {Comm. Math. Phys.},
  FJOURNAL = {Communications in Mathematical Physics},
    VOLUME = {333},
      YEAR = {2015},
    NUMBER = {2},
     PAGES = {1061--1105},
      ISSN = {0010-3616,1432-0916},
   MRCLASS = {82D25},
  MRNUMBER = {3296170},
       DOI = {10.1007/s00220-014-2118-6},
       URL = {https://doi.org/10.1007/s00220-014-2118-6},
}

\bib{TRUDI}{article}{
    AUTHOR = {Dipierro, S.},
    AUTHOR={Valdinoci, E.},
     TITLE = {Some perspectives on (non)local phase transitions and minimal
              surfaces},
   JOURNAL = {Bull. Math. Sci.},
  FJOURNAL = {Bulletin of Mathematical Sciences},
    VOLUME = {13},
      YEAR = {2023},
    NUMBER = {1},
     PAGES = {Paper No. 2330001, 77},
      ISSN = {1664-3607,1664-3615},
   MRCLASS = {53A10 (35J61 35R11 49Q05 82B26)},
  MRNUMBER = {4581189},
MRREVIEWER = {Nicola\ Abatangelo},
       DOI = {10.1142/S1664360723300013},
       URL = {https://doi.org/10.1142/S1664360723300013},
}

\bib{Fari2009}{book}{
    AUTHOR = {Farina, A.},
    AUTHOR={Valdinoci, E.},
     TITLE = {The state of the art for a conjecture of {D}e {G}iorgi and
              related problems},
 BOOKTITLE = {Recent progress on reaction-diffusion systems and viscosity
              solutions},
     PAGES = {74--96},
 PUBLISHER = {World Sci. Publ., Hackensack, NJ},
      YEAR = {2009},
      ISBN = {978-981-283-473-7; 981-283-473-7},
   MRCLASS = {35J60 (35B05)},
  MRNUMBER = {2528756},
MRREVIEWER = {Andrey\ B.\ Muravnik},
       DOI = {10.1142/9789812834744\_0004},
       URL = {https://doi.org/10.1142/9789812834744_0004},
}


\bib{fiscella}{article}{
    AUTHOR = {Fiscella, A.},
    AUTHOR={Servadei, R.},
    AUTHOR={Valdinoci, E.},
     TITLE = {Density properties for fractional {S}obolev spaces},
   JOURNAL = {Ann. Acad. Sci. Fenn. Math.},
  FJOURNAL = {Annales Academi\ae\ Scientiarum Fennic\ae. Mathematica},
    VOLUME = {40},
      YEAR = {2015},
    NUMBER = {1},
     PAGES = {235--253},
      ISSN = {1239-629X,1798-2383},
   MRCLASS = {46E35 (35A15 35R11 35S15)},
  MRNUMBER = {3310082},
MRREVIEWER = {Kimiaki\ Narukawa},
       DOI = {10.5186/aasfm.2015.4009},
       URL = {https://doi.org/10.5186/aasfm.2015.4009},
}

\bib{FV13}{book}{
    AUTHOR = {Franzina, G.},
    AUTHOR={Valdinoci, E.},
     TITLE = {Geometric analysis of fractional phase transition interfaces},
 BOOKTITLE = {Geometric properties for parabolic and elliptic {PDE}'s},
    SERIES = {Springer INdAM Ser.},
    VOLUME = {2},
     PAGES = {117--130},
 PUBLISHER = {Springer, Milan},
      YEAR = {2013},
      ISBN = {978-88-470-2840-1; 978-88-470-2841-8},
   MRCLASS = {35R11 (26A33)},
  MRNUMBER = {3050230},
       DOI = {10.1007/978-88-470-2841-8\_8},
       URL = {https://doi.org/10.1007/978-88-470-2841-8_8},
}

\bib{GM12}{article}{
    AUTHOR = {Gonz\'alez, M. M.},
    AUTHOR={Monneau, R.},
     TITLE = {Slow motion of particle systems as a limit of a
              reaction-diffusion equation with half-{L}aplacian in dimension
              one},
   JOURNAL = {Discrete Contin. Dyn. Syst.},
  FJOURNAL = {Discrete and Continuous Dynamical Systems. Series A},
    VOLUME = {32},
      YEAR = {2012},
    NUMBER = {4},
     PAGES = {1255--1286},
      ISSN = {1078-0947,1553-5231},
   MRCLASS = {35K15 (35B25 70F99 74N05)},
  MRNUMBER = {2851899},
       DOI = {10.3934/dcds.2012.32.1255},
       URL = {https://doi.org/10.3934/dcds.2012.32.1255},
}


\bib{KKL16}{article}{
    AUTHOR = {Kim, S.},
    AUTHOR={Kim, Y. C.},
    AUTHOR={Lee, K. A.},
     TITLE = {Regularity for fully nonlinear integro-differential operators
              with regularly varying kernels},
   JOURNAL = {Potential Anal.},
  FJOURNAL = {Potential Analysis. An International Journal Devoted to the
              Interactions between Potential Theory, Probability Theory,
              Geometry and Functional Analysis},
    VOLUME = {44},
      YEAR = {2016},
    NUMBER = {4},
     PAGES = {673--705},
      ISSN = {0926-2601,1572-929X},
   MRCLASS = {35R11 (35B65 35D40 47G20)},
  MRNUMBER = {3490545},
       DOI = {10.1007/s11118-015-9525-y},
       URL = {https://doi.org/10.1007/s11118-015-9525-y},
}


\bib{PSV13}{article}{
    AUTHOR = {Palatucci, G.},
    AUTHOR={Savin, O.},
    AUTHOR={Valdinoci, E.},
     TITLE = {Local and global minimizers for a variational energy involving
              a fractional norm},
   JOURNAL = {Ann. Mat. Pura Appl. (4)},
  FJOURNAL = {Annali di Matematica Pura ed Applicata. Series IV},
    VOLUME = {192},
      YEAR = {2013},
    NUMBER = {4},
     PAGES = {673--718},
      ISSN = {0373-3114,1618-1891},
   MRCLASS = {49J10 (26A33 35R11)},
  MRNUMBER = {3081641},
MRREVIEWER = {Matteo\ Focardi},
       DOI = {10.1007/s10231-011-0243-9},
       URL = {https://doi.org/10.1007/s10231-011-0243-9},
}

\bib{PV20}{article} {
    AUTHOR = {Palatucci, G.},
    AUTHOR = {Vincini, S.},
     TITLE = {Gamma-convergence for one-dimensional nonlocal phase
              transition energies},
   JOURNAL = {Matematiche (Catania)},
  FJOURNAL = {Le Matematiche},
    VOLUME = {75},
      YEAR = {2020},
    NUMBER = {1},
     PAGES = {195--220},
      ISSN = {0373-3505,2037-5298},
   MRCLASS = {82B26 (26A33 49J45)},
  MRNUMBER = {4069606},
       DOI = {10.1070/rm9898},
       URL = {https://doi.org/10.1070/rm9898},
}

\bib{Ros16}{article} {
    AUTHOR = {Ros-Oton, X.},
     TITLE = {Nonlocal elliptic equations in bounded domains: a survey},
   JOURNAL = {Publ. Mat.},
  FJOURNAL = {Publicacions Matem\`atiques},
    VOLUME = {60},
      YEAR = {2016},
    NUMBER = {1},
     PAGES = {3--26},
      ISSN = {0214-1493,2014-4350},
   MRCLASS = {35J25 (35B50 35B51 35B65 47G20 60G52 60J75)},
  MRNUMBER = {3447732},
MRREVIEWER = {Siegfried\ Carl},
       URL = {http://projecteuclid.org/euclid.pm/1450818481},
}

\bib{RS14}{article}{
    AUTHOR = {Ros-Oton, X.},
    AUTHOR = {Serra, J.},
     TITLE = {The {D}irichlet problem for the fractional {L}aplacian:
              regularity up to the boundary},
   JOURNAL = {J. Math. Pures Appl. (9)},
  FJOURNAL = {Journal de Math\'ematiques Pures et Appliqu\'ees. Neuvi\`eme
              S\'erie},
    VOLUME = {101},
      YEAR = {2014},
    NUMBER = {3},
     PAGES = {275--302},
      ISSN = {0021-7824,1776-3371},
   MRCLASS = {35R11 (35B65)},
  MRNUMBER = {3168912},
MRREVIEWER = {Kai\ Diethelm},
       DOI = {10.1016/j.matpur.2013.06.003},
       URL = {https://doi.org/10.1016/j.matpur.2013.06.003},
}
             
      \bib{SV12}{article}{
    AUTHOR = {Savin, O.},
    AUTHOR={Valdinoci, E.},
     TITLE = {{$\Gamma$}-convergence for nonlocal phase transitions},
   JOURNAL = {Ann. Inst. H. Poincar\'e{} C Anal. Non Lin\'eaire},
  FJOURNAL = {Annales de l'Institut Henri Poincar\'e{} C. Analyse Non
              Lin\'eaire},
    VOLUME = {29},
      YEAR = {2012},
    NUMBER = {4},
     PAGES = {479--500},
      ISSN = {0294-1449,1873-1430},
   MRCLASS = {35B27 (35J20)},
  MRNUMBER = {2948285},
MRREVIEWER = {Alexander\ A.\ Pankov},
       DOI = {10.1016/j.anihpc.2012.01.006},
       URL = {https://doi.org/10.1016/j.anihpc.2012.01.006},
}

\bib{SV14}{article}{
    AUTHOR = {Savin, O.},
    AUTHOR = { Valdinoci, E.},
     TITLE = {Density estimates for a variational model driven by the
              {G}agliardo norm},
   JOURNAL = {J. Math. Pures Appl. (9)},
  FJOURNAL = {Journal de Math\'ematiques Pures et Appliqu\'ees. Neuvi\`eme
              S\'erie},
    VOLUME = {101},
      YEAR = {2014},
    NUMBER = {1},
     PAGES = {1--26},
      ISSN = {0021-7824,1776-3371},
   MRCLASS = {49K10 (35B27 35J20 49J45 49N60)},
  MRNUMBER = {3133422},
MRREVIEWER = {Pietro\ Celada},
       DOI = {10.1016/j.matpur.2013.05.001},
       URL = {https://doi.org/10.1016/j.matpur.2013.05.001},
}

\bib{Sil06}{article}{
    AUTHOR = {Silvestre, L.},
     TITLE = {H\"older estimates for solutions of integro-differential
              equations like the fractional {L}aplace},
   JOURNAL = {Indiana Univ. Math. J.},
  FJOURNAL = {Indiana University Mathematics Journal},
    VOLUME = {55},
      YEAR = {2006},
    NUMBER = {3},
     PAGES = {1155--1174},
      ISSN = {0022-2518,1943-5258},
   MRCLASS = {45K05 (35B65 60J75)},
  MRNUMBER = {2244602},
       DOI = {10.1512/iumj.2006.55.2706},
       URL = {https://doi.org/10.1512/iumj.2006.55.2706},
}

\end{biblist}
\end{bibdiv}
\end{document}